\newtheorem{same}{This should never appear}[section]
\newtheorem{obs}[same]{Observation}
\newtheorem{theorem}[same]{Theorem}
\newtheorem{example}[same]{Example}
\newtheorem{lemma}[same]{Lemma}
\newtheorem{question}[same]{Question}
\newtheorem{cor}[same]{Corollary}
\newtheorem{prop}[same]{Proposition}
\theoremstyle{definition}
\newtheorem{defin}[same]{Definition}
\numberwithin{equation}{section}
\newbox\noforkbox \newdimen\forklinewidth
\noforkbox\hbox{\lower 2pt\box1\lower 2pt\box0\relax}
\def\unionstick{\mathop{\copy\noforkbox}\limits}
\def\nonfork_#1{\unionstick_{\textstyle #1}}
\newbox\doesforkbox
\doesforkbox\hbox{\lower 2pt\box1 \lower 2pt\box2\lower2pt\box0\relax}
\def\nunionstick{\mathop{\copy\doesforkbox}\limits}
\def\fork_#1{\nunionstick_{\textstyle #1}}
\newcommand{\M}{\mathcal{M}}
\newcommand{\dom}{\textrm{dom }}
\newcommand{\cf}{\text{cf }}
\newcommand{\rest}{\upharpoonright}
\newcommand{\rank}{\text{rank }}
\newcommand{\crit}{\text{crit }}
\newcommand{\seq}[1]{\langle #1 \rangle}
\newcommand{\one}{\mathop{1\hskip-2.5pt {\rm l}}}
\newcommand{\forces}{\Vdash}
\newcommand{\p}{\mathbb P}
\newcommand{\Add}{\rm Add}
\newcommand{\bA}{\mathbb{A}}
\newcommand{\bL}{\mathbb{L}}
\newcommand{\cF}{\mathcal{F}}
\newcommand{\cL}{\mathcal{L}}
\newcommand{\cM}{\mathcal{M}}
\newcommand{\cN}{\mathcal{N}}
\newcommand{\cP}{\mathcal{P}}
\newcommand{\fC}{\mathfrak{C}}
\newcommand{\fF}{\mathfrak{F}}
\newcommand{\fR}{\mathfrak{R}}
\newcommand{\im}{\text{im }}
\newcommand{\comment}[1]{}
\newcommand{\Fraisse}{Fra\"\i ss\'e}
\newcommand{\G}{\Game}
\newcommand{\Str}{\text{Str }}
\newcommand\Vopenka{Vop\v{e}nka}
\newcommand{\image}{\mathbin{\hbox{\tt\char'42}}}
\title{Model Theoretic Characterizations of Large Cardinals Revisited}
\author{Will Boney}
\address[W. Boney]{Mathematics Department, Texas State University, San Marcos, TX, USA}
\email{wb1011@txstate.edu}
\author{Stamatis Dimopoulos}
\address[S. Dimopoulos]{Department of Mathematics and Statistics, University of Cyprus, Nicosia, Cyprus}
\email{stamatiosdimopoulos@gmail.com}
\author{Victoria Gitman}
\address[V. Gitman]{Mathematics Department, CUNY Graduate Center, NY, USA}
\email{vgitman@gmail.com}
\author{Menachem Magidor}
\address[M. Magidor]{Institute of Mathematics, Hebrew university of Jerusalem, Jerusalem 91904, Israel}
\email{mensara@savion.huji.ac.il}
\date{\today\\
2020 MSC Classifications: 03E55, 03C95, 03C75, 03C55}
\begin{document}

\begin{abstract}
In \cite{b-mtlc}, model theoretic characterizations of several established large cardinal notions were given. We continue this work, by establishing such characterizations for Woodin cardinals (and variants), various virtual large cardinals, and subtle cardinals.
\end{abstract}

\maketitle
\section{Introduction}

The compactness of strong logics and set theory have been intertwined since Tarski \cite{t-compactness} defined (weakly and strongly) compact cardinals in terms of the properties of the infinitary logic $\bL_{\kappa, \omega}$. This established a strong connection between abstract model theory and the theory of large cardinals, which has also become apparent by the recent breakthroughs in the theory of Abstract Elementary Classes--a purely model theoretic framework--where certain important results depend on the existence of large cardinal axioms (e.g., \cite{b-tamelc, bu-lctame, sv-multidim}).

The interaction between the two fields is also strengthened by the first author's article \cite{b-mtlc}, which establishes new characterizations of established large cardinal notions, expressed in model theoretic terms as compactness properties. This paper is a sequel to \cite{b-mtlc} and characterizes more large cardinals this way, namely Woodin, various virtual large cardinals and subtle cardinals. Notably, this has led us to generalise or define new concepts in abstract model theory, that may be useful outside the scope of the current exposition.

One of the main philosophical open questions about the large cardinal hierarchy is to explain the fact that it appears to be linear. Hence, apart from the intrinsic interest, we believe that the framework of compactness principles that we invoke offers a new insight into this problem.

The structure of the paper is as follows. In Section \ref{sec:prelim} we fix our notation and terminology and recall definitions and known results from abstract model theory and large cardinals. In Section \ref{sec:woodin} we give a model-theoretic characterisation of Woodin cardinals by introducing a notion of Henkin models for arbitrary abstract logics. In Section \ref{sec:virtual} we characterise various virtual large cardinals by introducing the notion of a pseudo-model for a theory. Finally, in Section \ref{sec:vopenka-weak} we characterise (a class version of) subtle cardinals as a natural weakening of Vop\v{e}nka's principle by showing (in an appropriate second-order set theory) that if ${\rm Ord}$ is subtle, then every abstract logic has a stationary class of weak compactness cardinals.

The conversations leading to this paper began at the conference ``Accessible categories and their connections'' organized by Andrew Brooke-Taylor, and we would like to thank Brooke-Taylor for organizing a fascinating meeting.

\section{Logics and large cardinals}\label{sec:prelim}

\subsection{Abstract logics} We begin by fixing a notion of abstract logic.  The following is a sublist of standard properties, e.g., that appear in \cite[Definition 2.5.1]{changkeisler}, and we have omitted the clauses that do not factor into our analysis (e.g., the closure and quantifier properties).  From the definition of a language, it might appear that we have restricted ourselves to single-sorted, first-order structures.  However, many-sorts, higher-order relations, etc. can be coded into this framework.

\begin{defin}\
\begin{enumerate}
	\item A \emph{language} $\tau$ is a collection of function and relation symbols that come with a finite number as an arity, as well as constant symbols.  Formally, this means that $\tau$ is an ordered quadruple $(\fF, \fR, \fC, n)$ where $\fF$, $\fR$, and $\fC$ are disjoint sets and $n:\fF\cup\fR \to \omega$ is the arity function.
	\item Given a language $\tau$, $\Str \tau$ is the collection of all $\tau$-structures $M$, which consist of $\seq{|M|, F^M, R^M,c^M}_{F\in \fF, R \in \fR, c\in\fC}$, where $|M|$ is a set (called the \emph{universe} or \emph{underlying set} of $M$); $F^M:|M|^{n(F)} \to |M|$, $R^M \subseteq |M|^{n(F)}$, and $c^M\in |M|$.  We often do not notationally distinguish between $M$ and $|M|$.
	\item A \emph{morphism} $f$ between two languages $\tau = (\fF, \fR,\fC, n)$ and $\rho = (\fF', \fR',\fC', n')$ is an injective function $f:\fF\cup\fR\cup\fC \to \fF'\cup\fR'\cup\fC'$ that preserves the partition and maintains the arity.  A \emph{renaming} is a bijective morphism.  Note that a renaming\footnote{Technically, a renaming is not a map from $\tau$ to $\rho$, but rather a map between the unions of the components of each, but we will employ this abuse of notation here for clarity of presentation.} $f:\tau\to \rho$ induces a bijection $f^*$ from $\Str \tau$ to $\Str \rho$ that fixes the underlying sets.
	\item A logic is a pair of classes $(\cL, \vDash_\cL)$ satisfying the following conditions.
	\begin{enumerate}
		\item $\cL$ is a (class) map from languages and we call $\cL(\tau)$ the set of $\tau$-sentences.
		\item $\vDash_\cL \subseteq\bigcup_{\text{languages }\tau} \Str \tau \times \cL(\tau)$ is the satisfaction relation.
		\item (monotonicity) If $\tau \subseteq \rho$, then $\cL(\tau) \subseteq \cL(\rho)$.
		\item (expansion) If $\phi \in \cL(\tau)$, $\rho\supseteq \tau$, and $M$ is a $\rho$-structure, then $M \vDash_\cL \phi$ if and only if the reduct of $M$ to $\tau$, $M \rest \tau \vDash_\cL \phi$.
		\item (isomorphism) If $M \cong N$, then $M \vDash_\cL \phi$ if and only if $N \vDash_\cL \phi$.
		\item (renaming) Every renaming $f:\tau\to\rho$ induces a unique bijection \hbox{$f_*:\cL(\tau) \to \cL(\rho)$} such that, for any $\tau$-structure $M$ and $\phi \in \cL(\tau)$, we have
		$$M\vDash_\cL \phi \text{ if and only if } f^*(M) \vDash_\cL f_*(\phi).$$
	\end{enumerate}
	We often refer to a logic as just $\cL$ and drop the subscript from satisfaction--simply writing $\vDash$--when the context makes it clear.
	
	\item The \emph{occurrence number} of a logic $\cL$--written $o(\cL)$--is the minimal cardinal $\kappa$ such that, for every $\phi \in \cL(\tau)$, there is $\tau_0 \in \cP_\kappa \tau$ such that $\phi \in \cL(\tau_0)$.\footnote{The notation $\cP_\kappa A$ denotes the collection of all subsets of $A$ of size less than $\kappa$.}
	
\end{enumerate}
\end{defin}
Note that abstract logics are defined only for sentences, there is no incorporation of free variables, although these can be tacitly handled by adding and properly interpreting constants. So using this work around, we can, in fact, assume that free variables are available. Also, note that we are requiring abstract logics to have an occurrence number. By definition, all our languages $\tau$ are set-sized and there can be only set-many sentences $\cL(\tau)$ in a fixed language.

It will be useful for later to note that the unique bijections $f_*$ associated to renamings $f$ respect both composition and restriction. More precisely if $f:\tau\to\sigma$ and $g:\sigma\to\rho$ are renamings, then clearly $g\circ f$ is a renaming and, by uniqueness, it must be the case that $g_*\circ f_*=(g\circ f)_*$. Also, if $\sigma\subseteq \tau$ are languages and $f:\tau\to\rho$ is a renaming, then clearly $f\restriction\sigma:\sigma\to f\image\sigma$ is a renaming, and again by uniqueness, $(f\restriction\sigma)_*=f_*\restriction\sigma$.

The intuition behind most of the properties of an abstract logic is clear. The occurrence number captures our intuition that there should be a bound on the number of elements of a language that a single assertion can reference. For instance, first-order logic $\bL_{\omega,\omega}$ has occurrence number $\omega$ because no single assertion can mention more than finitely much of the language and infinitary logics $\bL_{\kappa,\omega}$ (see below for definition) have occurrence number $\kappa$.

We will often consider unions of logics. If $\cL_0$ and $\cL_1$ are logics, then $\cL_0 \cup \cL_1$ is the natural union of them, with sentences identified if they are satisfied by the same models.

An $\cL$-theory is ${<}\kappa$-satisfiable when every ${<}\kappa$-sized subset of it has a model. A cardinal $\kappa$ is a \emph{strong compactness cardinal} of a logic $\cL$ if every ${<}\kappa$-satisfiable $\cL$-theory is satisfiable. A cardinal $\kappa$ is a \emph{weak compactness cardinal} of a logic $\cL$ if every ${<}\kappa$-satisfiable $\cL$-theory of size $\kappa$ is satisfiable. For example, $\omega$ is the strong compactness cardinal of first-order logic, a weakly compact cardinal $\kappa$ is a weak compactness cardinal of the infinitary logic $\bL_{\kappa,\kappa}$, and a strongly compact cardinal $\kappa$ is a strong compactness cardinal of $\bL_{\kappa,\kappa}$. Makowsky showed that every logic has a strong compactness cardinal if and only if Vop\v{e}nka's principle holds \cite[Theorem 2]{m-vopcomp} (see Section~\ref{sec:virtual} for more details).

Another very useful, but less commonly known, compactness property is chain compactness. We will say that a cardinal $\kappa$ is a \emph{chain compactness cardinal} for a logic $\cL$ if every theory $\cL$-theory $T$, which can be written as an increasing union $T=\bigcup_{\eta<\kappa}T_\eta$ of satisfiable theories, is satisfiable. Note, in particular, if $\kappa$ is a chain compactness cardinal for $\cL$, then it is a weak compactness cardinal for $\cL$ because any ${<}\kappa$-satisfiable theory of size $\kappa$ can be written as an increasing chain of length $\kappa$ of satisfiable theories.

Next, we will give an overview of some specific logics that come up in the article and their key properties.  To distinguish abstract logics from a specific logic, we use $\cL$ to denote abstract logics and $\bL$ (with some decoration) to denote specific logics.

Given cardinals $\mu\leq\kappa$, the logic $\bL_{\kappa,\mu}$ extends first-order logic by closing the rules of formula formation under conjunctions (and disjunctions) of ${<}\kappa$-many formulas that are jointly in ${<}\mu$-many free variables, and under existential (and universal) quantification of ${<}\mu$-many variables. $\bL_{\omega,\omega}$ is just first-order logic and is typically denoted $\bL$, and if $\kappa$ or $\mu$ are uncountable, we refer to $\bL_{\kappa,\mu}$ as an infinitary logic.  As we already alluded to above, compactness and other properties of infinitary logics are connected to the existence of large cardinals.

Second-order logic $\bL^2$ extends first-order logic by allowing quantification over all relations on the universe (from the overarching universe $V$ of sets). In structures where coding is available, such as arithmetic or set theory, this reduces to quantification over all subsets of the universe. We will often make use of the fact that in $\bL^2(\{\in\})$, there is an  assertion, known as Magidor's $\Phi$, encoding the well-foundedness of $\in$ and that the model is isomorphic to some $V_\beta$; Magidor's $\Phi$ is used in proofs in \cite{m-roleof} and is explicitly discussed at \cite[Fact 2.1]{b-mtlc}. We can also extend the second-order logic $\bL^2$ by allowing infintary conjunctions and quantification, resulting in logics $\bL_{\kappa,\mu}^2$.

The logic $\bL(Q^{WF})$ is first-order logic augmented by the quantifier $Q^{WF}$ that takes in two variables so that $Q^{WF}xy\varphi(x,y)$ is true if $\varphi(x,y)$ defines a well-founded relation: there is no sequence $\langle x_n\mid n<\omega\rangle$ such that $\varphi(x_{n+1},x_n)$ holds for all $n<\omega$. Note that $\bL(Q^{WF})\subseteq \bL_{\omega_1,\omega_1}\cap \bL^2$ since the quantifier $Q^{WF}$ is expressible in each of these logics.

A particularly powerful logic extending second-order logic is \emph{sort logic} $\bL^{s}$, which was introduced by V\"a\"an\"anen (see \cite{vaananen:sortLogic} for a precise definition and properties). It has $\omega$-many sorts, each with its own universe of objects, and allows predicate quantifiers over all the relations on the sorts (this is essentially a generalization of second-order logic to $\omega$-many sorts). The crucial feature of sort logic are \emph{sort quantifiers} $\tilde\exists$ and $\tilde\forall$ which range over all sets in $V$ (not just relations on a sort) searching for additional universes satisfying some desired relations. A sort quantifier $\tilde\exists X$ answers the question about whether the model can be expanded to include universes with finitely many new sorts satisfying the relation $X$. Since sort logic extends second-order logic, we can, in particular, use Magidor's sentence $\Phi$ to pick out the structures $(V_\alpha,\in)$, and now using the power of sort quantifiers we will be able to express that $V_\alpha$ is $\Sigma_n$-elementary in $V$.
\begin{prop}\label{prop:sort}
In sort logic $\bL^s$, we can express that a universe of a given sort, with a binary relation on it, is (isomorphic to) $(V_\alpha,\in)$ with $V_\alpha\prec_{\Sigma_n} V$. This assertion has complexity $\Sigma_n$ for the sort quantifiers.
\end{prop}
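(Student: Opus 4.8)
The plan is to build the desired sentence in two layers: a second-order (sort-quantifier-free) layer that pins down the isomorphism type, and a sort-quantifier layer that reaches into $V$ to certify the elementarity. For the first layer I would simply invoke Magidor's $\Phi$, which already lives in $\bL^2\subseteq\bL^s$ and which, applied to a sort carrying a binary relation $E$, asserts that $(M,E)$ is well-founded, extensional, and isomorphic to some $(V_\alpha,\in)$; this contributes no sort quantifiers. The entire difficulty is therefore concentrated in the clause $V_\alpha\prec_{\Sigma_n}V$, and the key idea is that the sort quantifiers $\tilde\exists,\tilde\forall$ simulate first-order quantification over the ambient universe $V$: a block $\tilde\exists X$ adjoins a new sort, which we may take to be a transitive set together with its membership relation, thereby producing an arbitrary set of $V$ as a witness.

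Concretely, I would fix the uniform $\Sigma_n$-satisfaction formula and write each first-order formula of set theory in prenex form $\varphi\equiv\exists\bar u_1\,\forall\bar u_2\cdots Q\bar u_n\,\theta$ with $\theta\in\Sigma_0$. For a tuple $\bar a$ from the given sort, the external assertion ``$V\models\varphi(\bar a)$'' then translates into the sort-logic formula obtained by replacing each set-quantifier block by the matching sort-quantifier block $\tilde\exists/\tilde\forall$ that adjoins a transitive sort containing the witness together with $\bar a$, and finally asserting $\theta$; since $\theta$ is $\Sigma_0$ and hence absolute between transitive sets, its truth can be checked inside the adjoined sorts by second-order, sort-free means. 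This produces exactly $n$ alternations of sort quantifiers, so ``$V\models\varphi$'' is $\Sigma_n$ in the sort quantifiers for $\varphi\in\Sigma_n$, and dually for $\Pi_n$. The elementarity clause is then expressed by comparing this external truth value with the internal one: since $(M,E)\cong(V_\alpha,\in)$ with $\alpha$ a limit, the partial $\Sigma_n$-satisfaction predicate of $V_\alpha$ is definable inside the sort without any sort quantifiers, and $V_\alpha\prec_{\Sigma_n}V$ says exactly that for every code of a $\Sigma_n$ formula and every parameter tuple from the sort the internal and external truth values agree. I would verify this equivalence by the usual Tarski--Vaught induction on $n$, using $V_\alpha\prec_{\Sigma_{n-1}}V$ to guarantee that $V_\alpha$ evaluates the $\Pi_{n-1}$ matrices correctly.

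The main obstacle is the complexity accounting: making the final sentence land at exactly $\Sigma_n$ (rather than $\Pi_n$ or $\Sigma_{n+1}$) in the sort quantifiers. Two points need care. First, the defining statement quantifies universally over formula codes and parameter tuples and asserts a biconditional whose two halves are naively $\Sigma_n$ and $\Pi_n$; to collapse this I would invoke the prenex normal form for sort logic, which absorbs the internal first- and second-order quantifiers into a single leading existential sort block by a choice/Skolemization step that creates no new alternations. Second, to dispose of the downward ($\Pi_n$-looking) half I would use the reflection theorem to replace ``$V\models$'' by satisfaction in a single $\Sigma_n$-correct $V_\beta\supseteq V_\alpha$ grabbed by one existential sort quantifier; once $V_\beta$ is present as a sort, the relation $V_\alpha\prec_{\Sigma_n}V_\beta$ between two adjoined set models is itself sort-quantifier-free, so all remaining alternations come from certifying $V_\beta\prec_{\Sigma_n}V$, which I would arrange to match the target $\Sigma_n$ bound. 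Checking that these two reductions interlock, so that the leading existential block of the reflecting sort aligns with rather than adds to the $n$-alternation block coming from $\Sigma_n$-truth in $V$, is the delicate heart of the argument.
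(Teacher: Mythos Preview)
Your overall strategy matches the paper's: use Magidor's $\Phi$ to fix the isomorphism type, then translate unbounded set quantifiers over $V$ into sort quantifiers ``$\exists V_\delta$'' and ``$\forall V_\delta$'' ranging over rank initial segments. Where you diverge is in the complexity bookkeeping. The paper does not use your Skolemization/prenex maneuver or your single auxiliary reflecting $V_\beta$; instead it argues by a direct induction on $n$. For a $\Sigma_1$ formula $\phi(x)=\exists y\,\psi(y,x)$ with $\psi$ being $\Delta_0$, the translated sentence $\phi^*$ says, for each parameter $a$ in the sort, that $V_\alpha\vDash\phi(a)$ iff $\exists V_\delta\,\exists y\in V_\delta\,(V_\delta\vDash\psi(y,a))$; the only sort quantifier is the existential $\exists V_\delta$, so $\phi^*$ is $\Sigma_1$. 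At level $n{+}1$ one quantifies $\exists V_\delta$ only over those $V_\delta$ that are already $\Sigma_n$-correct in $V$, and the clause ``$V_\delta$ is $\Sigma_n$-correct'' is available at complexity $\Sigma_n$ by the inductive hypothesis, so the whole sentence lands in $\Sigma_{n+1}$. Your worry that certifying ``$V_\beta\prec_{\Sigma_n}V$'' is circular is exactly what this induction dissolves: one never needs the level-$n$ statement to produce the level-$n$ statement, only to produce level $n{+}1$.

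Your proposed prenex/Skolemization step is the weakest link and is unnecessary. You do not explain why absorbing the universal quantifier over parameter tuples and one half of the biconditional into a leading existential sort block creates no new alternations, and it is not clear that sort logic supports the normal form you invoke. The paper's inductive formulation sidesteps this issue entirely, so your elaborate second reduction (grab a $\Sigma_n$-correct $V_\beta$, then reduce to $V_\alpha\prec_{\Sigma_n}V_\beta$) is doing by hand, and less cleanly, what the induction does uniformly.
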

\begin{proof}
We use Magidor's $\Phi$ to express that the universe is some $V_\alpha$. Next, we argue, by induction on complexity, that for every formula $\phi(x)$ in the language of set theory, there is a corresponding sentence $\phi^*$ of sort logic such that $V_\alpha$ reflects $V$ with respect to $\phi(x)$ if and only if $V_\alpha$ satisfies $\phi^*$ in sort logic. For the base case, observe that $V_\alpha$ already reflects $V$ with respect to all $\Delta_0$-assertions. For a $\Sigma_1$-assertion $\phi(x):=\exists y\,\psi(y,x)$ with $\psi(y,x)$ being $\Delta_0$, we let $\phi^*$ informally be the formula $\forall a\in V_\alpha\,V_\alpha\vDash\phi(a)\leftrightarrow\exists V_\delta\, (\exists y\in V_\delta\, V_\delta\vDash\psi(y,a))$. We say here ``informally" because in actuality we would have to say that there exists a predicate satisfying Magidor's $\Phi$ and an embedding of $V_\alpha$ into the universe of this predicate so that we have $\psi(y,a')$ holds, where $a'$ is the image of $a$ under the isomorphism, etc. For $\Pi_1$-formulas, we replace the $\exists V_\delta\,\exists y$ quantifiers by $\forall V_\delta\,\forall y$ and for formulas of complexity $n+1$, we use the translation for formulas of complexity $n$ to quantify only over $V_\delta$'s that are sufficiently elementary in $V$. Note that for $\Sigma_1$-formulas $\phi(x)$, the sentence $\phi^*$ has complexity $\Sigma_1$ for sort quantifiers because the only sort quantifier is ``$\exists V_\delta$", and correspondingly, for $\Sigma_n$-formulas $\phi(x)$, the complexity of $\phi^*$ will be $\Sigma_n$.
\end{proof}
Because defining a satisfaction relation for $\bL^{s}$ runs into definability of truth issues, we limit our analysis to logics $\bL^{s,\Sigma_n}$ where we are only allowed to use $\Sigma_n$-formulas with sort quantifiers.


As a curiosity, observe that, for example, the logic $\bL_{{\rm Ord},{\omega}}$ is not an abstract logic under our criteria because, in particular, there is a proper class of sentences for a given language. This logic has several other undesirable properties as well: it does not have an occurrence number and it can never have a weak compactness cardinal. We will call such logics \emph{quasi-logics}.
\subsection{Large cardinals}\label{sub:largecardinals}

We collect here several of the large cardinal notions that we use.  Occasionally, we defer a definition to later if it is tailored to a specific situation.

First, we have several variants of Woodin cardinals.  The notion of externally definable Woodin cardinals is new.  The definition of Woodin for strong compactness (due to Dimopoulos) is phrased in the equivalent form of \cite[Proposition 3.3]{d-wfsc}.

\begin{defin}\
\begin{enumerate}
\item A cardinal $\kappa$ is \emph{$\alpha$-strong for a set $A$} if there is an elementary embedding $j:V\to\cM$ with
\begin{enumerate}
\item $\crit j=\kappa$,
\item $j(\kappa)>\alpha$,
\item $V_\alpha\subseteq \cM$,
\item $j(A)\cap V_\alpha=A\cap V_\alpha$.
\end{enumerate}
A cardinal $\kappa$ is \emph{${<}\delta$-strong for a set $A$} if it is $\alpha$-strong for $A$ for every $\kappa<\alpha<\delta$.
\item A cardinal $\kappa$ is \emph{$\alpha$-strongly compact for a set of ordinals $A$} if there is an elementary embedding $j:V\to \cM$ with
\begin{enumerate}
\item $\crit j=\kappa$,
\item $j(\kappa)>\alpha$,
\item $j(A)\cap \alpha=A\cap \alpha$,
\end{enumerate}
and there is $s \in \cM$ with $|s|^\cM < j(\kappa)$ and $j\image\alpha \subseteq s$. A cardinal $\kappa$ is \emph{${<}\delta$-strongly compact for a set of ordinals $A$} if it is $\alpha$-strongly compact for $A$ for every $\kappa<\alpha<\delta$.

	\item A cardinal $\delta$ is \emph{Woodin} if for all $A \subseteq V_\delta$, there is $\kappa<\delta$ which is ${<}\delta$-strong for $A$.\footnote{It would be equivalent to replace the requirement ``for all $A\subseteq V_\delta$" with the requirement ``for all sets $A$". The formulation we use is more standard because it emphasizes that no information outside $V_\delta$ is required.}
	\item A cardinal $\delta$ is \emph{externally definable Woodin} if it satisfies the definition of a Woodin cardinal when restricting the $A$'s to be externally definable sets.  Explicitly, this means that for every formula $\phi(x,a)$ with $a\in V_\delta$, there is $\kappa < \delta$ such that $a\in V_\kappa$ and for all $\kappa <\alpha < \delta$, there is an elementary embedding $j:V \to \cM$ with
\begin{enumerate}
    \item $\crit j = \kappa$,
    \item $j(\kappa)> \alpha$,
    \item $V_\alpha \subseteq \cM$, and
    \item $\phi(\cM,a) \cap V_\alpha =  \phi(V,a) \cap V_\alpha$.
\end{enumerate}
Here, a set $A$ has been replaced with the definable class $\phi(V,a)$.

\item A cardinal $\delta$ is \emph{Woodin for strong compactness} if for every $A\subseteq \delta$ there is $\kappa<\delta$ which is ${<}\delta$-strongly compact for $A$.

\end{enumerate}
\end{defin}

It should be clear that Woodin cardinals are externally definable Woodin. It is also not difficult to see that if $\delta$ is a Woodin cardinal, then $V_\delta$ is a model of proper class many externally definable Woodin cardinals. Woodin cardinals are Mahlo, but not necessarily weakly compact (because being Woodin is a $\Pi^1_1$-property). In \cite{DimopolousGitman:Woodin}, we show that consistently externally definable Woodin cardinals can be singular of cofinality $\omega$ or inaccessible, but not Mahlo. We also show that a Mahlo externally definable Woodin cardinal need not be Woodin.

It should be noted that in the definition of $\alpha$-strongly compact cardinals for a set $A$, we need $A$ to be a set of ordinals (instead of an arbitrary set). This is because strongly compact embeddings do not necessarily possess strongness degrees, so properties of the form $A\cap V_\alpha=j(A)\cap V_\alpha$, may not make sense if $V_\alpha$ is not the same in the target model of the embedding $j$. Nevertheless, an equivalent definition of $\delta$ being Woodin for strong compactness is that for each $A\subseteq V_\delta$, there is $\kappa<\delta$ which is both ${<}\delta$-strong for $A$ and ${<}\delta$-strongly compact, and for each $\alpha<\delta$ these two properties can be witneseed by the same embedding. For the full proof of this characterization, see Theorem 3.3. in \cite{d-wfsc}.

The next class of large cardinals we will consider are the recently introduced virtual large cardinals. Given a set-theoretic property $P$ characterized by the existence of elementary embeddings between (set) first-order structures, we say that $P$ holds \emph{virtually} if embeddings characterizing $P$ between structures from $V$ exist in set-forcing extensions of $V$.

Since most large cardinals can be characterized by the existence of elementary embeddings between (set) models of set-theory (in the case of class embeddings $j:V\to \cM$ we chop off the universe at an appropriate rank initial segment), they are natural candidates for virtualization. The study of virtual large cardinals was initiated by Schindler when he introduced the notion of a remarkable cardinal and showed that it is equiconsistent with the assertion that the theory of $L(\mathbb R)$ cannot be changed by proper forcing \cite{schindler:remarkable1}. He later observed that remarkable cardinals have an equivalent characterization as virtually supercompact cardinals. Other virtual large cardinals were subsequently studied in \cite{gs-virtual}. Unlike their philosophical cousins, the generic large cardinals, virtual large cardinals are actual large cardinals (they are ineffable and more), but they sit much lower in the hierarchy than their original counterparts. They are consistent with $L$ and are in the neighborhood of an $\omega$-Erd\H{o}s cardinal. In the definitions of virtual large cardinals given below, we will abbreviate the statement that an elementary embedding exists in some forcing extension by saying that there is a ``virtual elementary embedding".

The following absoluteness lemma for the existence of embeddings on a countable structure has crucial implications for the theory of virtual embeddings.
\begin{lemma}\label{lem:absolutenessLemma}
Suppose that $M$ is a countable first-order structure and $j:M\to N$ is an elementary embedding. If $W$ is a transitive (set or class) model of (some sufficiently large fragment of) ${\rm ZFC}$ such that $M$ is countable in $W$ and $N\in W$, then for any finite subset of $M$, $W$ has some elementary embedding $j^*:M\to N$, which agrees with $j$ on that subset. Moreover, if both $M$ and $N$ are transitive $\in$-structures and $j$ has a critical point, we can additionally assume that $\crit(j^*)=\crit(j)$.
\end{lemma}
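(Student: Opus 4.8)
The plan is to run the classical absoluteness-of-wellfoundedness argument, coding the elementary embeddings $M\to N$ as branches through a tree that lives in $W$. First I would fix, inside $W$, an enumeration $\langle a_n\mid n<\omega\rangle$ of the universe of $M$, which is possible since $M$ is countable in $W$. Given the prescribed finite subset $F\subseteq M$, I define a tree $T$, ordered by end-extension, whose nodes are finite sequences $\langle b_0,\dots,b_{k-1}\rangle$ from $N$ such that the partial map $a_i\mapsto b_i$ preserves the truth of every first-order formula, that is, $M\vDash\phi(a_0,\dots,a_{k-1})$ iff $N\vDash\phi(b_0,\dots,b_{k-1})$ for each $\tau$-formula $\phi$, and such that $b_i=j(a_i)$ whenever $a_i\in F$. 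Since $M,N\in W$ and the Tarski satisfaction relation for set-sized structures is definable over any model of a sufficiently large fragment of ${\rm ZFC}$, the tree $T$ is genuinely an element of $W$.

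Two observations drive the argument. First, an infinite branch through $T$ is exactly the graph of a total elementary embedding $j^*:M\to N$ agreeing with $j$ on $F$: totality holds because every $a_n$ is assigned a value by level $n+1$, and elementarity because each formula involves only finitely many variables and is therefore decided at some finite level. Second, in $V$ the sequence $\langle j(a_n)\mid n<\omega\rangle$ is such a branch, so $T$ is illfounded in $V$. The heart of the proof is then the absoluteness of wellfoundedness: wellfoundedness of a set relation is simultaneously $\Sigma_1$ (existence of a rank function) and $\Pi_1$ (nonexistence of an infinite descending sequence) over a sufficient fragment of ${\rm ZFC}$, hence $\Delta_1$ and absolute between the transitive models $V$ and $W$, both of which contain $T$. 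Thus $T$ is illfounded in $W$ as well, and a branch through $T$ extracted inside $W$ yields the desired $j^*\in W$.

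For the moreover clause I would strengthen the tree rather than the finite set. Since $M$ is a transitive $\in$-structure with $\crit(j)=\kappa$, every ordinal below $\kappa$ lies in $M$, and there are countably many of them, so they cannot all be captured by enlarging $F$; instead I build the requirement ``$b_i=a_i$ whenever $a_i$ is an ordinal ${<}\kappa$'' directly into the definition of the nodes of $T$, and I place $\kappa$ itself into $F$, forcing $j^*(\kappa)=j(\kappa)$. The branch $\langle j(a_n)\mid n<\omega\rangle$ still witnesses illfoundedness in $V$, because $j$ fixes every ordinal below its critical point. Consequently the resulting $j^*\in W$ fixes every $\alpha<\kappa$ while sending $\kappa$ to $j(\kappa)>\kappa$, so $\crit(j^*)=\kappa$.

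The only genuinely delicate points are ensuring that $T$ is an element of $W$ (not merely definable over $W$) and checking that wellfoundedness absoluteness applies even though $V$ and $W$ need not be comparable. This is precisely where transitivity of $W$ and the $\Delta_1$-characterisation of wellfoundedness are used, and neither $W\subseteq V$ nor $V\subseteq W$ is required. Everything else—branch equals embedding, the finite-support argument for elementarity, and the critical-point computation—is routine.
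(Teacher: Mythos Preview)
The paper does not prove this lemma itself; it merely cites \cite{gs-virtual} for the argument. Your proposal is exactly the standard proof found there: code potential embeddings as branches through a tree $T\in W$ of finite partial elementary maps (with the finitely many prescribed values of $j$ hard-wired in), observe that $j$ itself gives a branch in $V$, and invoke the absoluteness of well-foundedness for the transitive model $W$ to extract a branch---hence an embedding $j^*$---inside $W$. The handling of the critical point by building the constraint ``fix every ordinal below $\kappa$'' into the tree and placing $\kappa$ in the finite set $F$ is likewise the standard move.

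One small remark: your closing comment that ``neither $W\subseteq V$ nor $V\subseteq W$ is required'' is slightly off. Since $W$ is assumed transitive, automatically $W\subseteq V$, and this inclusion is precisely what makes the argument work: if $T$ were well-founded in $W$, the rank function produced in $W$ would be a genuine rank function in $V$ (ordinals are absolute for transitive models), contradicting the branch $\langle j(a_n)\mid n<\omega\rangle$ that lives in $V$. So the absoluteness you need is the contrapositive of upward absoluteness of well-foundedness from $W$ to $V$, not some symmetric $\Delta_1$ absoluteness between incomparable models. This does not affect the correctness of your proof, only the phrasing of that final paragraph.
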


\begin{cor}
Suppose that $M$ and $N$ are first-order structures in a common language such that there is a virtual elementary embedding $j:M\to N$. Then for any finite subset of $M$, every collapse extension ${\rm Coll}(\omega,M)$ has an elementary embedding $j^*:M\to N$, which agrees with $j$ on that subset. Moreover, if both $M$ and $N$ are transitive $\in$-structures and $j$ has a critical point, we can additionally assume that $\crit(j^*)=\crit(j)$.
\end{cor}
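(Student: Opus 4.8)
The plan is to fix a ${\rm Coll}(\omega,M)$-generic $g$ over $V$ and to produce the desired $j^*$ inside $V[g]$ by a direct appeal to Lemma~\ref{lem:absolutenessLemma}. First I would unwind the meaning of virtuality: there is a poset $\mathbb{P}\in V$, a $\mathbb{P}$-name $\dot{j}$, and a condition $p\in\mathbb{P}$ with $p\forces_{\mathbb{P}}$ ``$\dot{j}\colon\check M\to\check N$ is elementary''. Since $M$ and $N$ are structures in $V$, the assertions ``$\dot{j}(\check a)=\check b$'' for the finitely many $a$ in the prescribed finite subset $F$ (and, in the transitive $\in$-case, ``$\crit(\dot{j})=\check\kappa$'') are each decided by a condition, so after strengthening $p$ I may assume that $p$ forces $\dot{j}$ to agree with the given embedding $j$ on $F$ and to have the same critical point.

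The main maneuver is to manufacture a single universe that simultaneously contains $V[g]$ and a genuine realization of the virtual embedding. To do this I force with $\mathbb{P}$ over $V[g]$, choosing a generic $G\ni p$ over $V[g]$. By the product lemma the pair $(g,G)$ is ${\rm Coll}(\omega,M)\times\mathbb{P}$-generic over $V$; in particular $G$ is $\mathbb{P}$-generic over $V$ and $V[G]\subseteq V[g][G]$. Because $p\in G$, the model $V[G]$ contains an elementary embedding $j_G\colon M\to N$ that agrees with $j$ on $F$ (and, where relevant, satisfies $\crit(j_G)=\crit(j)$).

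Now I would apply Lemma~\ref{lem:absolutenessLemma} inside the set-generic extension $V[g][G]$. This ambient universe satisfies ${\rm ZFC}$, the structure $M$ is countable there since $g$ collapsed it, and the embedding $j_G\colon M\to N$ lives there. Taking $W=V[g]$, which is a transitive class model of ${\rm ZFC}$ in which $M$ is still countable and which contains $N$ (as $N\in V\subseteq V[g]$), the lemma delivers an elementary $j^*\colon M\to N$ in $V[g]$ agreeing with $j_G$ on $F$, hence with $j$ on $F$; the moreover clause yields $\crit(j^*)=\crit(j_G)=\crit(j)$ in the transitive $\in$-case.

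I expect the only genuine obstacle to be conceptual rather than computational: the virtual embedding and the collapse generic are a priori given by unrelated forcings and so need not live in a common universe. The device of forcing $\mathbb{P}$ over $V[g]$ rather than over $V$, together with the product lemma, is precisely what deposits a realization of $j$ into an extension of $V[g]$, after which the absoluteness lemma does all the work. The remaining points---that $N\in V$ guarantees $N\in W$, and that a single condition can pin down the finite part of $j$ and its critical point---are routine.
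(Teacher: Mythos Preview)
Your argument is correct and is the standard derivation of the corollary from Lemma~\ref{lem:absolutenessLemma}. The paper does not give its own proof here but defers to \cite{gs-virtual}; the proof there proceeds exactly as you outline, by passing to a common extension in which both the collapse generic and a realization of the virtual embedding live, and then invoking the absoluteness lemma with $W$ taken to be the collapse extension.
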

\noindent The proofs of the lemma and the corollary can be found in \cite{gs-virtual}.


\begin{defin}\
\begin{enumerate}
	\item A cardinal $\kappa$ is \emph{virtually measurable} if for every $\alpha > \kappa$, there is a transitive $\cM$ with $\cM^{{<}\kappa}\subseteq \cM$ such that there is a virtual elemetary embedding $j:V_\alpha \to \cM$ with $\crit j = \kappa$.
	\item A cardinal $\kappa$ is \emph{virtually supercompact} (remarkable) if for every $\lambda > \kappa$, there is $\alpha>\lambda$ and a transitive $\cM$ with $\cM^\lambda\subseteq \cM$ such that there is a virtual elementary embedding $j:V_\alpha \to \cM$ with $\crit j=\kappa$ and $j(\kappa)>\lambda$.
\item A cardinal $\kappa$ is \emph{virtually extendible} if for every $\alpha > \kappa$, there is a virtual elementary embedding $j:V_\alpha \to V_\beta$ with $\crit j = \kappa$ and $j(\kappa) > \alpha$. A cardinal $\kappa$ is \emph{weakly virtually extendible} if we omit the assumption that $j(\kappa)>\alpha$.\\[5pt]
Let $C^{(n)}$ be the class club of cardinals $\alpha$ such that $V_\alpha\prec_{\Sigma_n} V$.
\item A cardinal $\kappa$ is \emph{virtually $C^{(n)}$-extendible} if\footnote{Bagaria's original definition of $C^{(n)}$-extendibility required only that $V_{j(\kappa)}\prec_{\Sigma_n}V$, but the third author and Hamkins \cite{gh-genvop} and Tsarpounis \cite[Corollary 3.5]{t-cn-ext} (independently) showed that the two definitions are equivalent.} for every $\alpha>\kappa$ in $C^{(n)}$, there is $\beta\in C^{(n)}$ and a virtual elementary embedding $j:V_\alpha\to V_\beta$ with $\crit j=\kappa$ and $j(\kappa)>\alpha$. A cardinal $\kappa$ is \emph{weakly virtually $C^{(n)}$-extendible} if we omit the assumption that $j(\kappa)>\alpha$.
\end{enumerate}
\end{defin}
Although, for the most part the hierarchy of virtual large cardinals mirrors the hierarchy of their original counterparts, but much lower down, there are anomalies that appear to arise from the following two circumstances. The first is that Kunen's Inconsistency does not hold for virtual large cardinals, meaning that we can have virtual elementary embeddings $j:V_\alpha\to V_\alpha$ with $\alpha$ much larger than the supremum of the critical sequence of $j$. This accounts for the split in the definition of virtually extendible cardinals into the weak and strong forms given above. In the case of the actual extendible cardinals, we can argue using Kunen's Inconsistency that the assumption $j(\kappa)>\alpha$ is superfluous, which gives that the weak extendible and extendible cardinals are equivalent. But the equivalence fails in a surprising way in the virtual context, where it is consistent that there are weakly virtually extendible cardinals that are not virtually extendible, and moreover the consistency strength of the existence of such a notion is higher than that of the existence of a virtually extendible cardinal \cite{gh-genvop}. The second circumstance is that the more robust virtual large cardinals arise from large cardinals that have characterizations in terms of the existence of virtual embeddings between rank initial segments $V_\alpha$, such as rank-into-rank, supercompact, and extendible cardinals. Virtual versions of large cardinal notions lacking such characterizations do not fit properly into the hierarchy. For example, the following is not difficult to see.
\begin{theorem}[\cite{gs-virtual}]
A cardinal $\kappa$ is virtually supercompact if and only if for every $\lambda > \kappa$, there is $\alpha>\lambda$ and a transitive $\cM$ with $V_\lambda\subseteq \cM$ such that there is a virtual elementary embedding $j:V_\alpha \to \cM$ with $\crit j=\kappa$ and $j(\kappa)>\lambda$.
\end{theorem}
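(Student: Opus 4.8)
The two conditions differ only in the closure demanded of the transitive target $\cM$: the definition requires closure under $\lambda$-sequences, $\cM^\lambda\subseteq\cM$ (the usual supercompactness closure, computed in $V$), whereas the stated equivalent requires only the rank-initial-segment condition $V_\lambda\subseteq\cM$. For a general $\lambda$ neither closure implies the other, but for a strong limit $\lambda$ the sequence-closure does yield the rank condition; the substantive content of the theorem is that, virtually, the rank condition alone suffices to recover full $\lambda$-closure. I prove the two implications separately, the forward one being routine and the converse carrying the weight.

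For the forward direction, fix $\lambda>\kappa$ and apply the definition not at $\lambda$ but at a strong limit cardinal $\bar\lambda>\lambda$. This yields $\alpha>\bar\lambda$, a transitive $\cM$ with $\cM^{\bar\lambda}\subseteq\cM$, and a virtual embedding $j:V_\alpha\to\cM$ with $\crit j=\kappa$ and $j(\kappa)>\bar\lambda$. Since $\bar\lambda$ is a strong limit, a routine induction on $\gamma<\bar\lambda$ gives $V_{\bar\lambda}\subseteq\cM$: at a successor stage each $A\subseteq V_\gamma$ is enumerated as a sequence of length $|V_\gamma|<\bar\lambda$ of elements of $V_\gamma\subseteq\cM$, so the sequence, and hence $A$ as its range, lies in $\cM$ by closure. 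In particular $V_\lambda\subseteq\cM$, and since $\alpha>\bar\lambda>\lambda$ and $j(\kappa)>\bar\lambda>\lambda$, the tuple $(\alpha,\cM,j)$ witnesses the equivalent condition at $\lambda$.

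The converse is the crux. The structural obstacle is that $V_\lambda\subseteq\cM$ delivers a strongness-type target, whereas closure $\cM^\lambda\subseteq\cM$ is tied to the presence of the supercompactness seed $j\image\lambda$ inside $\cM$, which $V_\lambda\subseteq\cM$ does not supply (the seed has size $\lambda$ and its members have unbounded rank in $\cM$). Since the definition insists that the target be a set of $V$ that is $\lambda$-closed in $V$, the plan is to manufacture such a target by hand and then transport the given embedding into it. Concretely, fix $\lambda$ and apply the hypothesis at $\lambda$ (enlarging $\lambda$ first if necessary so that $\alpha$ is comfortably above it) to obtain a virtual $j:V_\alpha\to\cM$ with $V_\lambda\subseteq\cM$, $\crit j=\kappa$, and $j(\kappa)>\lambda$. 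In $V$ choose an elementary substructure $X\prec V_\gamma$, for large regular $\gamma$, with ${}^\lambda X\subseteq X$ and $\{\kappa,\lambda,\cM,V_\alpha\}\cup V_\lambda\subseteq X$, and let $\cM'\in V$ be its transitive collapse under $\pi$. Because $X$ is closed under $\lambda$-sequences, so is $\cM'$, i.e.\ $(\cM')^\lambda\subseteq\cM'$; moreover $\pi$ fixes $\lambda$ and $\kappa$.

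It then remains to produce a virtual elementary embedding $V_{\alpha'}\to\cM'$ with critical point $\kappa$ and image of $\kappa$ above $\lambda$, and this is exactly where Lemma~\ref{lem:absolutenessLemma} and its Corollary are indispensable. By the Corollary the given embedding into $\cM$ is realized in every collapse of $V_\alpha$ to $\omega$, and on structures that have been made countable elementary embeddings are abundant and can be prescribed on any finite set while preserving the critical point. The plan is to compose the collapse map $\pi$ with (a suitable modification of) the given embedding and to invoke the absoluteness lemma to realize the composite as a genuine virtual embedding into $\cM'$ with critical point $\kappa$; that $\pi$ fixes $\kappa$ and $\lambda$ then yields $j'(\kappa)>\lambda$. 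I expect this transport step to be the main obstacle: coordinating the hand-built $\lambda$-closed target $\cM'$ with the embedding into $\cM$ and checking elementarity and the critical-point data. It has no analogue for actual large cardinals and succeeds only because virtual embeddings on countable structures are governed by the absoluteness lemma, which trades the missing supercompactness seed for the abundance of embeddings available after collapsing $V_\alpha$ to be countable.
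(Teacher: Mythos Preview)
The paper does not actually prove this theorem; it is stated with a citation to \cite{gs-virtual} and the remark that it ``is not difficult to see.'' So there is no in-paper proof to compare against. Your forward direction is fine. The converse, however, has a genuine gap, and the approach you outline does not work as stated.

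The problem is the ``transport step.'' You build $X\prec V_\gamma$ with ${}^\lambda X\subseteq X$ and collapse to $\cM'$; this does give a $\lambda$-closed transitive set, but the collapse map $\pi$ points \emph{out} of $\cM'$ (its inverse is an elementary embedding $\cM'\to V_\gamma$), not into it. The virtual embedding $j$ lands in $\cM$, and even if you could arrange $\text{ran}(j)\subseteq \cM\cap X$ (which is not automatic, since $j$ lives only in a forcing extension), composing with $\pi$ would give a map into $\pi(\cM)$, which is merely an \emph{element} of $\cM'$, not an elementary substructure of it, and there is no reason $\pi(\cM)$ should itself be $\lambda$-closed. The absoluteness lemma lets you move embeddings between models where the domain is countable, but it does not let you redirect the codomain from $\cM$ to an unrelated $\lambda$-closed hull. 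You flag this step as the main obstacle, but the obstacle is not merely technical: the hull-and-collapse construction produces the wrong kind of object for this purpose.

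The standard argument (from \cite{gs-virtual}) is instead a reflection using the elementarity of $j$ itself. Given $\lambda'$, apply the hypothesis at a much larger $\lambda$ (say a $\beth$-fixed point of cofinality $>\lambda'$, so that $(V_\lambda)^{\lambda'}\subseteq V_\lambda$). In the collapse extension one has $j\restriction V_\lambda: V_\lambda\to V_{j(\lambda)}^{\cM}$, and since the existence of such a virtual embedding is absolute to $\cM$ (via the forth-system characterization of Theorem~\ref{th-virt}), $\cM$ sees a virtual embedding $V_\mu^{\cM}\to V_{j(\lambda)}^{\cM}$ with $\mu<j(\kappa)$ and critical point sent to $j(\kappa)$. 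Pulling back along $j$ yields, in $V_\alpha$, a virtual $h:V_{\bar\mu}\to V_\lambda$ with $\bar\mu<\kappa$ and $h(\crit h)=\kappa$ --- the Magidor characterization of remarkability. From there one recovers the $\lambda'$-closed target directly (indeed $V_\lambda$ itself works). The point is that the supercompactness closure is obtained not by building a closed hull from outside, but by reflecting down to a small domain whose image is a genuine $V_\lambda$.
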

\noindent It follows that the notions of virtually strong and virtually supercompact cardinals coincide, and one should note here that strong cardinals do not have a characterization in terms of existence of embeddings between rank initial segments. This phenomena is also exhibited by the virtually measurable cardinals introduced and studied by Nielsen and Welch \cite{nw-games-ramsey}\footnote{Note that Nielsen's original definition did not have the closure assumption on the target models $\cM$.}, who showed that virtually measurable cardinals and virtually supercompact cardinals are equiconsistent (either a virtually measurable cardinal is virtually supercompact in $L$ or there is a virtually rank-into-rank cardinal in $L$).

Finally, Section \ref{sec:vopenka-weak} uses the hypothesis that ${\rm Ord}$ is subtle. Recall that a regular cardinal $\kappa$ is \emph{subtle} if for every club $C\subseteq\kappa$ and sequence $\langle A_\alpha\mid\alpha<\kappa\rangle$ with $A_\alpha\subseteq\alpha$, there are $\alpha<\beta$ in $C$ such that $A_\alpha=A_\beta\cap\alpha$.
\begin{defin}\label{def-subtle}
We say that \emph{{\rm Ord} is subtle} if for every class club $C\subseteq{\rm Ord}$ and every class sequence $\langle A_\alpha\mid\alpha\in{\rm Ord}\rangle$ with $A_\alpha\subseteq\alpha$, there are $\alpha<\beta \in C$ such that $A_\alpha=A_\beta \cap \alpha$.
\end{defin}
The definition requires us to specify precisely what a class is in a universe of set theory. There are a number of approaches to this and we will be intentionally vague about which approach we take in this article. We can be working in first-order logic in the theory ${\rm ZFC}$ and specify that classes are definable collections. We can be working in any one of the numerous second-order set theories where classes are second-order objects, such as G\"odel-Bernays set theory ${\rm GBC}$ or the much stronger Kelley-Morse set theory ${\rm KM}$. We can also assume that our universe is the $V_\kappa$ of a much larger ${\rm ZFC}$-model in which $\kappa$ is subtle and the classes in this case are the $V_{\kappa+1}$ of this model. In the last case, $V_\kappa$ together with the classes given by $V_{\kappa+1}$ satisfy Kelley-Morse (and more). Our arguments will require the class axiom \emph{global choice} which asserts that there is a class well-ordering of the universe of sets. Both ${\rm GBC}$ and Kelley-Morse include global choice, but a universe of set theory need not have a definable well-ordering of all sets, so global choice can fail for definable classes.

\section{Woodin cardinals and abstract Henkin structures}\label{sec:woodin}
In this section, we will give compactness characterizations for Woodin cardinals and their variants. To motivate our characterizations, we start by recalling the compactness characterization of strong cardinals.

Recall that a \emph{standard} model $(M,P)$ of second-order logic has the second-order part $P$ consisting of all subsets of $M$, so that the second-order quantifiers range over all subsets of the domain. A \emph{Henkin} model $(M,P)$ has the second-order part $P\subseteq \cP(M)$ that is a possibly proper sub-collection of the subsets of $M$, so that the second-order quantifiers cannot access all subsets of the domain.
\begin{theorem}[{\cite[Theorem 4.7]{b-mtlc}}]\label{thm:strong-char}
A cardinal $\kappa$ is $\lambda$-strong if and only if  every $\bL^2_{\kappa, \omega}(Q^{WF})$ theory $T$, which can be written as an increasing union $T = \cup_{\eta < \kappa} T_\eta$ of (standardly) satisfiable theories, has a Henkin model whose universe is an ordinal and whose second-order part has all subsets of rank $<\lambda$.
\end{theorem}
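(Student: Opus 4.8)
The statement is a biconditional, and the two directions use quite different ideas; I would prove them separately.

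For the forward direction, suppose $j : V \to \cM$ witnesses that $\kappa$ is $\lambda$-strong, so $\crit j = \kappa$, $j(\kappa) > \lambda$, and $V_\lambda \subseteq \cM$. Given a theory $T = \bigcup_{\eta<\kappa} T_\eta$ presented as an increasing union of standardly satisfiable $\bL^2_{\kappa,\omega}(Q^{WF})$-theories, I would apply $j$ to the sequence $\langle T_\eta : \eta<\kappa\rangle$ and look at the $\kappa$-th term $S$ of $j(\langle T_\eta : \eta<\kappa\rangle)$. Since $\crit j = \kappa$, for each $\eta<\kappa$ the $\eta$-th term of $j(\langle T_\eta\rangle)$ is $j(T_\eta)$, and since $j(\langle T_\eta\rangle)$ is increasing with $\kappa < j(\kappa)$, we get $j(\phi) \in S$ for every $\phi \in T$. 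By elementarity $S$ is standardly satisfiable in $\cM$, so $\cM$ contains a standard second-order model $\cN = (\gamma, \cP(\gamma)^\cM)$ of $S$ whose universe is an ordinal $\gamma$ (normalising by a well-ordering inside $\cM$).

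To finish, I would reinterpret $\cN$ as a structure $\cN'$ for the original language by declaring $s^{\cN'} := (j(s))^{\cN}$ for each symbol $s$. For a fixed $\phi \in T$, the restriction of $j$ to the (fewer than $o(\bL^2_{\kappa,\omega}(Q^{WF})) = \kappa$ many) symbols occurring in $\phi$ is a renaming onto its image, and $j(\phi)$ is precisely the image of $\phi$ under the induced $f_*$; hence the renaming axiom gives $\cN' \vDash \phi$ iff $\cN \vDash j(\phi)$, and the right-hand side holds because $j(\phi) \in S$. Moreover, satisfaction of a fixed $\bL^2_{\kappa,\omega}(Q^{WF})$-sentence by the Henkin structure $\cN'$ is absolute between $\cM$ and $V$: the infinitary conjunctions, the second-order quantifiers (which range over the fixed set $\cP(\gamma)^{\cM}$), and $Q^{WF}$ (well-foundedness, absolute for the transitive $\cM$) are all computed the same way. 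Thus $\cN' \vDash_V \phi$ for every $\phi \in T$, and $\cN'$ is a Henkin model with ordinal universe $\gamma$ whose second-order part $\cP(\gamma)^{\cM}$ contains every subset of $\gamma$ of rank $<\lambda$, since $V_\lambda \subseteq \cM$.

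For the converse I would run the usual ``compactness produces an embedding'' strategy. Fix $\theta > \lambda$ with $V_\theta \models \mathrm{ZFC}^-$ and form the theory $T$ in the language with $\in$, a constant $\underline a$ for each $a \in V_\theta$, and one extra constant $d$, consisting of: the full second-order elementary diagram of $(V_\theta, \in, a)_{a \in V_\theta}$ (whose defining sentences for the $\underline\alpha$ with $\alpha<\kappa$ already force $\crit j \geq \kappa$); the $Q^{WF}$-axiom asserting that $\in$ is well-founded, so that with extensionality any model collapses to a transitive $N$ carrying an induced elementary $j : V_\theta \to N$; the axioms $d \mathbin{\in} \underline\kappa$ together with $\{\underline\alpha \mathbin{\in} d : \alpha<\kappa\}$, which force $j(\kappa) > \kappa$, hence $\crit j = \kappa$; and a second-order \emph{closure axiom} asserting that every second-order object coding a well-founded extensional set of rank $<\lambda$ is realised as a first-order element. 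I would then write $T = \bigcup_{\eta<\kappa} T_\eta$ with $\underline\alpha \mathbin{\in} d$ included in $T_\eta$ only for $\alpha<\eta$; each $T_\eta$ is standardly satisfiable by the genuine $(V_\theta, \in)$ with $d$ interpreted as $\eta<\kappa$, and the closure axiom holds there because $V_\lambda \subseteq V_\theta$.

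Applying the hypothesised compactness to $T$ yields a Henkin model whose universe is an ordinal and whose second-order part $P$ contains all subsets of rank $<\lambda$; collapsing its well-founded first-order part gives $j : V_\theta \to N$ with $\crit j = \kappa$. The role of the closure axiom is that, because $P$ contains every genuine rank-$<\lambda$ subset, the axiom forces each genuine set of rank $<\lambda$ to be an element of $N$, i.e.\ $V_\lambda \subseteq N$; and from an embedding $j : V_\theta \to N$ with critical point $\kappa$ and $V_\lambda \subseteq N$ one obtains, by the standard derived-extender construction with seeds up to $\lambda$, a class embedding witnessing that $\kappa$ is $\lambda$-strong (in particular with $j(\kappa) > \lambda$). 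The main obstacle is exactly this verification that $V_\lambda \subseteq N$: one must arrange the coding of genuine rank-$<\lambda$ sets by second-order objects so that it is respected by the transitive collapse, which amounts to an induction on rank $<\lambda$ matching the collapse against the realisations supplied by the closure axiom, and one must confirm that this closure axiom is expressible in $\bL^2_{\kappa,\omega}(Q^{WF})$ while keeping each $T_\eta$ standardly satisfiable.
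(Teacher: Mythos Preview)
This theorem is not proved in the present paper; it is quoted from \cite{b-mtlc} as background for the abstract Henkin structures of Section~\ref{sec:woodin}. The only proof content appearing here is the sketch in the paragraph following Definition~\ref{defn:henkin-structure}, and your forward direction matches it exactly: apply a $\lambda$-strong embedding $j:V\to\cM$ to the filtration, take in $\cM$ a standard model of the $\kappa$-th term of $j(\langle T_\eta\rangle)\supseteq j\image T$, normalise its universe to an ordinal $\gamma$, and pull back to a $\tau$-structure along the renamings $j\restriction\tau_\phi$. Your absoluteness argument---second-order quantifiers ranging over the fixed set $\cP(\gamma)^\cM$, and $Q^{WF}$ computed correctly because $\cM$ is transitive---is precisely the point the paper highlights when it says ``$j(\vDash_2)$ is the same as $\vDash_2$ for structures in $\cM$.''

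For the converse you give the standard diagram-plus-closure-axiom strategy, and you are right that the passage from ``$P$ contains all rank-${<}\lambda$ subsets of $\gamma$'' to ``$V_\lambda\subseteq N$'' is where the work lies. Since the paper does not carry out this direction, there is nothing here to compare against; your identification of the obstacle and the proposed induction on rank are in line with how such arguments go in the cited source. One point deserving more care than your sketch gives it: after collapsing $(\gamma,E)$ to $N$ via $\pi$, the set $\pi^{-1}\image a$ for a genuine $a\in V_\lambda$ need not itself be a rank-${<}\lambda$ subset of $\gamma$, so the closure axiom must be phrased so that $a$ is instead coded by an abstract copy of $(\text{tc}(\{a\}),\in)$ sitting on a bounded initial segment of $\lambda\subseteq\gamma$; such a code \emph{does} lie in $P$ by hypothesis, and this is what lets the induction close.
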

\noindent Requiring that the universe of the Henkin model is an ordinal ensures that the condition of having all small subsets of the universe is non-vacuously satisfied because a Henkin model with an arbitrary universe may not have any subsets of rank $<\lambda$.  Also, the $Q^{WF}$ is required to be standard in this interpretation (otherwise, it's inclusion would be superfluous).

Due to the similarity between Woodin and strong cardinals, the compactness characterization of Woodin cardinals will also use Henkin models.  However, to accommodate the ``$A \subseteq V_\delta$" parameter, we will need a much more specialized notion of a Henkin structure for abstract logics. In a Henkin model we have a nonstandard conception of evaluating the truth of sentences (and the data to carry out this evaluation) because we do not have access to all true subsets, but the collection of sentences remain the same.  We encapsulate this idea in the following definition of Henkin structures for abstract logics.

\begin{defin}\label{defn:henkin-structure}
Fix a logic $(\cL, \vDash_\cL)$ and a language $\tau$.
\begin{enumerate}
	\item A \emph{Henkin $\tau$-structure for $\cL$} is a model of set theory $\hat{M} = (\cM,{\rm E},\vDash^*, M)$, with an additional binary relation $\vDash^*$ and a distinguished element $M$, satisfying the following properties:
\begin{enumerate}
\item $\hat M\vDash{\rm ZFC}^*$ (a large finite fragment of ${\rm ZFC}$).
\item $\tau\in \cM$, $\cL(\tau)\subseteq \cM$.
\item For $\Sigma_n$-formulas $\varphi(x)$ (where $n$ is taken to be very large), we have $\varphi(\vDash_\cL)$ if and only if $\cM \vDash \varphi(\vDash^*).$

\end{enumerate}

	\item Given an $\cL(\tau)$-theory $T$, we say that a Henkin structure $\hat M=(\cM, {\rm E},\vDash^*, M)$ \emph{Henkin-models} $T$ if for every $\phi\in T$, $\cM$ has a partial $\tau\supseteq\tau_\phi$-structure on $M$ such that $M\vDash^*\phi$ and the assignment of the partial $\tau_\phi$-structures is coherent in the sense that if $\tau_\phi\subseteq \tau_\psi$, then the structure assignments extend correspondingly.
\end{enumerate}
\end{defin}
We think of $\hat M$ as a model of set theory that is correct about the sentences in $\cL(\tau)$, but which has a nonstandard satisfaction for them given by the relation $\vDash^*$. The statement ``$\tau\in \cM$" needs further clarification. Here we intend that not only is $\tau$ an element of the universe $\cM$ of $\hat M$, but also $\cM$ sees $\tau$ as a language with its functions, relations, and constants. In practice, we will be working with Henkin $\tau$-structures for which ${\rm E}$ is $\in$ and $(\cM,\in)$ is transitive, so these issues will not arise. For our purposes, it appears to be too strong to require that a transitive $\in$-structure $\hat M$ has a total $\tau$-structure on $M$ which witnesses that $M$ satisfies $T$ according to $\vDash^*$. Thus, we require only that sufficiently large pieces of $\tau$ can be interpreted over $M$ making it satisfy $\phi\in T$ and the interpretation is coherent. It would be easy to achieve a full interpretation of $\tau$ if we were willing to give up transitivity, but this appears to us to be unnatural.

Note that, given a Henkin $\tau$-structure $\hat M=(\cM, {\rm E}, \vDash^*,M)$ that Henkin models a theory $T$, we do get in $V$ a total $\tau$-structure on $M$ that is the union of the coherent interpretations from $\cM$.

It should be noted that our set-up does not generalize the usual notion of Henkin models for second-order logic, which we described above, because it cannot accommodate models with arbitrary second-order parts, but instead restricts it only to models with a ``sensible" second-order part, namely one that arises from a model of set theory that reflects fundamental properties of second-order logic. This is a natural class of strong Henkin models for second-order logic. This restricted notion of a Henkin model for second-order logic, which lies somewhere between the standard and Henkin models, appears to be interesting in its own right.

The motivation behind the clauses of Definition \ref{defn:henkin-structure} will become apparent after the proof of Theorem \ref{thm:woodin-char}, but we will try to give some explanation here. Henkin models came up in the proof of Theorem \ref{thm:strong-char} through elementary embeddings of the form $j:V\to \cM$ with $V_\lambda\subseteq \cM$ for some cardinal $\lambda$. More precisely, for a second-order theory $T$, the proof gives a second-order model of $j\image T$ in the sense of $\cM$ and such a model can interpret the second-order variables correctly only if they refer to subsets of $V_\lambda$. It is crucial that the relation $j(\vDash_2)$ is the same as $\vDash_2$ for structures in $\cM$. Also, the renaming of $T$ to $j\image T$ is not necessarily in $\cM$.

Our proof will work with abstract logics $\cL$ and we will once again use elementary embeddings $j:V\to \cM$ to obtain models of the desired theory in $\cM$. Here, we face the challenge that the satisfaction relation $\vDash_{\cL}$ may not be the same as $j(\vDash_{\cL})$ and the latter is essentially the $\vDash^*$ relation of the definition. Also, we once again find a model for $j\image T$ instead of $T$, but since $j\image T$ may not be in $\cM$, $\cM$ may not have the right $\tau$-structure for $M$ obtained via the renaming $j:\tau\to j\image \tau$, although it will have coherent partial $\tau$-structures resulting from this renaming.

In order to ensure some similarity between $\cL$ and $j(\cL)$, we add Clause (1c) which guarantees that any $\Sigma_n$-definable aspect of $\cL$ (for some very large $n$) is mirrored by $\cM$.


The following are a list of useful properties of Henkin $\tau$-structures. Often, whether or not a Henkin $\tau$-structure correctly computes satisfaction for a logic will depend on these properties.  For instance, a Henkin $\tau$-structure $\hat{M}$ will correctly verify the well-foundedness quantifier $Q^{WF}$ when $\cM$ is itself well-founded.  Note that we emphasize well-foundedness rather than transitivity to make the notion closed under isomorphism.

\begin{defin}\label{defn:henkin-properties}
Given a Henkin $\tau$-structure $\hat{M} = (\cM,{\rm E}\vDash^*, M)$ for a logic $(\cL, \vDash_\cL)$ and language $\tau$, we say:
\begin{enumerate}
	\item $\hat{M}$ is \emph{well-founded} whenever $(\cM,{\rm E})$ is well-founded.
\item $\hat {M}$ is \emph{transitive} whenever ${\rm E}$ is $\in$ and $(\cM,\in)$ is transitive.
	\item $\hat{M}$ is \emph{full up to $\lambda$} whenever $V_\lambda \subseteq \cM$ (if $\cM$ is not well-founded, this means for every $x \in V_\lambda$, there is $y \in \cM$ such that ${\rm E}$ is well-founded below $y$ and the transitive collapse of $(y, {\rm E})$ is $x$).
	\item Suppose that $\hat{M}$ is transitive and full up to $\lambda$. We say $\hat{M}$ is \emph{$\cL$-correct up to $\lambda$} if $(\vDash^*)\restriction V_\lambda\times V_\lambda= (\vDash)\restriction V_\lambda\times V_\lambda$. 
\item Suppose that $\hat{M}$ is transitive and $A\subseteq \cM$. We say that $\hat{M}$ is \emph{$n$-correct for $A$} if
\begin{itemize}
\item for every $\Sigma_n$-formula $\phi(x)$ and $a\in A$, $\cM\vDash \phi(a)$ if and only if $\phi(a)$ holds,
\item for every $\Sigma_n$-formula $\varphi(x)$, $\varphi(\vDash_{\cL})$ if and only if $\hat M\vDash\varphi(\vDash^*)$.
\end{itemize}
\end{enumerate}
\end{defin}

We will now be able to characterize Woodin cardinals in terms of chain compactness for sufficiently correct Henkin $\tau$-structures.
Before we give the proof, we are going to need a notion of closure points for abstract logics, which is captured by the following result.

\begin{prop}\label{prop:closure-points}
Let $(\cL,\vDash_{\cL})$ be a logic. There is a closed unbounded class of cardinals $\alpha$ such that
\begin{enumerate}
 \item $\cL\restriction V_\alpha:V_\alpha\to V_\alpha$,
 \item $o(\cL)<\alpha$.
\end{enumerate}
\end{prop}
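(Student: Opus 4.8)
The plan is to read clause (1), namely ``$\cL\restriction V_\alpha:V_\alpha\to V_\alpha$'', as the assertion that $V_\alpha$ is closed under the class function $\cL$: for every language $\tau\in V_\alpha$ we have $\cL(\tau)\in V_\alpha$. With this reading the proposition is a routine closure-point (reflection) argument for the definable class function $\tau\mapsto\cL(\tau)$, intersected with the club of cardinals and with the tail of ordinals above the fixed occurrence number. Concretely, I would set
\[
C=\{\alpha\in\Card : \alpha>o(\cL)\text{ and }\cL(\tau)\in V_\alpha\text{ for every language }\tau\in V_\alpha\},
\]
and prove that $C$ is closed unbounded; every $\alpha\in C$ then witnesses both clauses, since at a (limit) cardinal $\alpha$ closure of $V_\alpha$ under $\cL$ is exactly clause~(1) and $\alpha>o(\cL)$ is clause~(2). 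Recall that $\Card$, the class of infinite cardinals, is itself closed unbounded, and the tail $\{\alpha:\alpha>o(\cL)\}$ is a club class, so it suffices to handle the closure-point condition.

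For \emph{unboundedness}, given any ordinal $\gamma_0>o(\cL)$ I would build an increasing $\omega$-chain of cardinals $\gamma_0<\gamma_1<\cdots$ as follows: at stage $n$, apply Replacement to the \emph{set} function $\cL\restriction V_{\gamma_n}$ to see that $S_n=\{\cL(\tau):\tau\in V_{\gamma_n}\text{ a language}\}$ is a set, and then choose a cardinal $\gamma_{n+1}>\gamma_n$ exceeding $\rank(s)$ for every $s\in S_n$. Then $\alpha=\sup_n\gamma_n$ is a limit cardinal above $\gamma_0$, and any language $\tau\in V_\alpha$ lies in some $V_{\gamma_n}$, so $\cL(\tau)\in S_n$ has rank below $\gamma_{n+1}\le\alpha$, giving $\cL(\tau)\in V_\alpha$; hence $\alpha\in C$. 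For \emph{closedness}, let $\lambda$ be a limit point of $C$. As a limit of cardinals $\lambda$ is a cardinal, and as a limit of ordinals each above $o(\cL)$ it satisfies $\lambda>o(\cL)$. If $\tau\in V_\lambda$ is a language, then $\rank(\tau)<\lambda$, so unboundedness of $C$ below $\lambda$ yields some $\beta\in C$ with $\rank(\tau)<\beta<\lambda$; then $\tau\in V_\beta$, whence $\cL(\tau)\in V_\beta\subseteq V_\lambda$, so $\lambda\in C$.

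The one genuinely delicate point, and the step I would be most careful about, is the use of Replacement at the successor stage: it relies on $\cL$ being an honest definable (parametrized) class function, so that its restriction to the set $V_{\gamma_n}$ is a set, \emph{and} on each individual $\cL(\tau)$ being a set rather than a proper class. The latter is exactly the standing hypothesis of the paper that a fixed language carries only set-many sentences (equivalently, that $\cL$ has an occurrence number), which is precisely why the ``quasi-logic'' $\bL_{{\rm Ord},\omega}$ is excluded. Everything else is the standard verification that the closure points of a class function form a club, together with the two harmless intersections with $\Card$ and with the tail above $o(\cL)$.
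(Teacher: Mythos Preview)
Your proposal is correct and is precisely the ``standard closure argument'' the paper alludes to (the paper gives no details beyond that phrase). Your care about the Replacement step and the set-sized sentences hypothesis is well placed, since that is exactly where the argument would fail for quasi-logics like $\bL_{{\rm Ord},\omega}$.
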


\noindent The proof is a standard closure argument.

\begin{defin}
For a logic $(\cL,\vDash_{\cL})$, a cardinal $\alpha$ that satisfies the properties of Proposition \ref{prop:closure-points} is called a \emph{closure point of $\cL$}.
\end{defin}

\begin{theorem}\label{thm:woodin-char}
The following are equivalent for a cardinal $\delta$.
\begin{enumerate}
	\item $\delta$ is Woodin.
	\item For every logic $(\cL, \vDash_\cL)$ with closure point $\delta$, there is $\kappa < \delta$ such that given any $\kappa<\lambda<\delta$ and any theory $T \subseteq \cL\cup \bL_{\kappa,\omega}(\tau)$ for a language $\tau\in V_\lambda$, if $T$ can be written as an increasing union $T=\bigcup_{\eta<\kappa}T_\eta$ of satisfiable theories, then $T$ has a transitive Henkin $\tau$-structure that is full up to $\lambda$ and $\cL$-correct up to $\lambda$.
\end{enumerate}
\end{theorem}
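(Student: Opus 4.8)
The plan is to follow the template of the strong-cardinal characterization, Theorem~\ref{thm:strong-char}: in the forward direction I would produce the required Henkin structures as rank-initial segments of the targets of ${<}\delta$-strong embeddings $j:V\to\cM$, and in the reverse direction extract such embeddings from Henkin structures delivered by chain compactness. The Woodin parameter $A\subseteq V_\delta$ is carried in the first direction by (the restriction to $V_\delta$ of) the satisfaction relation $\vDash_\cL$, and in the second by an explicit diagram. Throughout fix that $\delta$ is a closure point of $\cL$, so that $\cL\restriction V_\delta:V_\delta\to V_\delta$ and $o(\cL)<\delta$.

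For $(1)\Rightarrow(2)$, I would encode into a single set $A\subseteq V_\delta$ the restricted relation $(\vDash_\cL)\cap V_\delta$ together with the map $\tau\mapsto\cL(\tau)$ on $V_\delta$, padding $A$ (e.g. by coding the function sending $\beta$ to the least closure point above it) so that any cardinal which is ${<}\delta$-strong for $A$ is forced to be a closure point, hence $>o(\cL)$. Woodinness yields $\kappa<\delta$ that is ${<}\delta$-strong for $A$; this is the required $\kappa$. Given $\kappa<\lambda<\delta$ and $T=\bigcup_{\eta<\kappa}T_\eta\subseteq(\cL\cup\bL_{\kappa,\omega})(\tau)$ with $\tau\in V_\lambda$, fix a closure point $\alpha$ with $\lambda<\alpha<\delta$ and take $j:V\to\cM$ witnessing $\alpha$-strength for $A$: $\crit j=\kappa$, $j(\kappa)>\alpha$, $V_\alpha\subseteq\cM$, and $j(A)\cap V_\alpha=A\cap V_\alpha$. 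Writing $j(\langle T_\eta\rangle)=\langle S_\xi:\xi<j(\kappa)\rangle$, we have $S_\eta=j(T_\eta)$ for $\eta<\kappa=\crit j$, so $j\image T\subseteq\bigcup_{\eta<\kappa}S_\eta\subseteq S_\kappa$; by elementarity $\cM$ thinks $S_\kappa$ is satisfiable, hence so is its subset $j\image T$, giving a model $N\in\cM$ with universe $M=|N|$. I would then set $\hat M=(V_\gamma^\cM,\in,\vDash^*,M)$, where $\vDash^*$ is the satisfaction relation of $\cM$ and $\gamma$ is a closure point of $j(\cL)$ in $\cM$, above $\lambda$, with $N\in V_\gamma^\cM$ and $\gamma$ sufficiently $\Sigma_n$-correct in $\cM$. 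As $V_\gamma^\cM$ is a genuine rank-initial segment of the inner model $\cM$, it is transitive, and $V_\lambda=V_\lambda^\cM\subseteq V_\gamma^\cM$ gives fullness up to $\lambda$; the agreement $j(A)\cap V_\alpha=A\cap V_\alpha$ together with $A$ coding $\vDash_\cL$ yields $(\vDash^*)\restriction(V_\lambda\times V_\lambda)=(\vDash_\cL)\restriction(V_\lambda\times V_\lambda)$, i.e. $\cL$-correctness up to $\lambda$, and Clause (1c) of Definition~\ref{defn:henkin-structure} follows from elementarity of $j$ and the coding. The delicate point is that $\hat M$ must Henkin-model $T$ and not $j\image T$: for each $\phi\in T$ with small $\tau_\phi$ (of size $<o(\cL)\le\kappa=\crit j$) we have $j(\tau_\phi)=j\image\tau_\phi$ and $j(\phi)=(j\restriction\tau_\phi)_*(\phi)$, so pulling back the reduct $N\restriction j(\tau_\phi)$ along the renaming $j\restriction\tau_\phi$ produces a partial $\tau_\phi$-structure with $M\vDash^*\phi$, and the compatibility $(f\restriction\sigma)_*=f_*\restriction\sigma$ makes these cohere. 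Verifying that each small renaming is available in $\cM$ (even though the full $j\restriction\tau$ need not be, exactly as flagged after Definition~\ref{defn:henkin-structure}) and that the pieces cohere is the main work in this direction.

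For $(2)\Rightarrow(1)$, let $A\subseteq V_\delta$ be given; I must find $\kappa<\delta$ that is ${<}\delta$-strong for $A$. Apply (2) to $\cL=\bL^2(Q^{WF})$ (which has closure point $\delta$) to obtain a uniform $\kappa<\delta$. Fixing $\kappa<\alpha<\delta$, choose a closure point $\lambda\in(\alpha,\delta)$ and build a theory $T=\bigcup_{\eta<\kappa}T_\eta$ in $(\bL^2(Q^{WF})\cup\bL_{\kappa,\omega})(\tau)$, with $\tau\in V_\lambda$ containing $\in$, a predicate $\dot A$, an embedding predicate $e$, and constants $c_x$ for $x$ in a large $V_\theta$ ($\alpha<\theta<\lambda$). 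Using Magidor's $\Phi$ and $Q^{WF}$, $T$ asserts that the distinguished model $M$ is (isomorphic to) a genuine $(V_{\bar\beta},\in)$, that $x\mapsto c_x$ is an elementary embedding $e$ with $\dot A$ reflecting $A$, and that $e$ fixes $\xi$ for each $\xi<\kappa$ with $e$ nontrivial at $\kappa$ (these clauses distributed so that $T_\eta$ mentions only $\xi<\eta$). Each piece $T_\eta$ is standardly satisfiable by reflection: a Skolem hull and transitive collapse of a large $(V_\mu,\in,A)$ furnishes a genuine elementary embedding fixing the ordinals below $\eta$, with the still-unconstrained parameters interpreted freely, using no large-cardinal hypothesis on $\kappa$. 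Chain compactness at $\kappa$, supplied by (2), then yields a transitive Henkin structure $\hat M$, full up to $\lambda$ and $\cL$-correct up to $\lambda$, that Henkin-models $T$. Transitivity makes $Q^{WF}$ and $\Phi$ correct, so $M$ is a genuine $V_{\bar\beta}$ and $e$ a genuine elementary embedding with $\crit e=\kappa$; crucially, the strong requirements $j(\kappa)>\alpha$ and $V_\alpha\subseteq\cM$ are met not by the standardly-satisfiable pieces but by fullness up to $\lambda\ge\alpha$ and $\cL$-correctness up to $\lambda$, while the $\dot A$-diagram together with correctness gives $\dot A^M\cap V_\alpha=A\cap V_\alpha$. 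Passing to the extender derived from $e$ produces $j:V\to\cM$ with $\crit j=\kappa$, $j(\kappa)>\alpha$, $V_\alpha\subseteq\cM$ and $j(A)\cap V_\alpha=A\cap V_\alpha$; as $\alpha$ was arbitrary, $\kappa$ is ${<}\delta$-strong for $A$.

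The hardest step overall is splitting the content of the embedding correctly across the three devices: the length-$\kappa$ chain of \emph{standardly} satisfiable pieces (which can force only $\crit e=\kappa$), the fullness of the final \emph{Henkin} structure up to $\lambda>\alpha$ (which supplies $V_\alpha\subseteq\cM$ and $j(\kappa)>\alpha$), and the diagram together with $\cL$-correctness (which supplies the $A$-agreement) — ensuring that no piece is forced to encode a non-trivial embedding outright, while the union's Henkin model nonetheless yields the genuine ${<}\delta$-strong-for-$A$ embedding. In the forward direction the analogous crux is the renaming and coherence of the partial $\tau$-structures, since $j\restriction\tau\notin\cM$.
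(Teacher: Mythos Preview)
Your forward direction $(1)\Rightarrow(2)$ is close to the paper's, but your encoding of $A$ is incomplete. You encode $\vDash_\cL\cap V_\delta$ and $\tau\mapsto\cL(\tau)$, and you correctly identify the crux as showing $j(\phi)=(j\restriction\tau_\phi)_*(\phi)$. The problem is \emph{which} $(\cdot)_*$: the Henkin structure uses $\cM$'s satisfaction $\vDash^*=\vDash_{j(\cL)}$, so the relevant renaming bijection is the one $\cM$ computes for $j(\cL)$, and nothing you have encoded forces this to agree with $V$'s $(\cdot)_*$ for $\cL$. The paper fixes this by also encoding the class $R=\{(\tau,\sigma,f,f_*)\}$ of renaming--bijection pairs and demanding $R\restriction V_\lambda=j(R)\restriction V_\lambda$; one then factors through a small language $\sigma\in V_\kappa$ via $g:\sigma\to\tau_\phi$, uses elementarity to get $j(g_*)(\bar\phi)=j(\phi)$, and uses the $R$-agreement to identify $g_*^V$ with $g_*^\cM$, yielding $f_*(\phi)=j(\phi)$. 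Without the $R$-agreement this step does not go through.

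The reverse direction $(2)\Rightarrow(1)$ has a more serious gap. You apply (2) to the \emph{fixed} logic $\bL^2(Q^{WF})$, obtaining a single $\kappa$ independent of $A$, and then carry $A$ only as a predicate $\dot A$ in the theory. But the diagram sentences $c_a\in\dot A$, $c_b\notin\dot A$ control only the \emph{image} $j(A)$: they give $j(a)\in\dot A^M$ iff $a\in A$, which says nothing about whether $a\in\dot A^M$ for $a\in V_\alpha\setminus V_\kappa$. And $\cL$-correctness up to $\lambda$ is correctness for $\bL^2(Q^{WF})$, a logic that knows nothing about $A$, so it cannot supply the missing agreement $j(A)\cap V_\alpha=A\cap V_\alpha$. (Indeed, if your argument worked it would produce one $\kappa<\delta$ that is ${<}\delta$-strong for \emph{every} $A\subseteq V_\delta$, which is strictly more than Woodinness.) The paper's solution is to build $A$ into the logic itself: one defines $\bL^A$ extending $\bL^2$ by a new formula $\phi_{A,E}(x)$ asserting that (the transitive collapse of) $x$ lies in $A$, and applies (2) to $\bL^A$. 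Now the $\kappa$ does depend on $A$, and $\cL^A$-correctness up to $\lambda$ says precisely that $\vDash^*$ evaluates $\phi_{A,\in}$ correctly on $V_\lambda$; combined with theory clauses $\forall x\,(x\in c_{A\cap V_\alpha}\leftrightarrow(\phi_{A,\in}(x)\wedge x\in c_{V_\alpha}))$, this yields $j(A\cap V_\alpha)\cap V_\alpha=A\cap V_\alpha$.
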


\begin{proof}
For the forward direction, fix a logic $\cL$ with a closure point at $\delta$. By definition the occurrence number of $\cL$, $o(\cL)<\delta$. For this proof, we need to consider another class related to $\cL$. Let $R$ be the class of quadruples $(\tau,\sigma,f,f_*)$ such that $\tau$ and $\sigma$ are languages, $f:\sigma\to\tau$ is a renaming and $f_*:\cL(\sigma)\to \cL(\tau)$ is the associated bijection. Using the fact that $\delta$ is Woodin, let $\kappa>o(\cL)$ be a ${<}\delta$-strong cardinal for the sets $\cL\upharpoonright V_\delta$, $\vDash_{\cL}\restriction V_\delta$, and $R\restriction V_\delta$.

First, observe that $\kappa$ must be a closure point of $\cL$. Suppose to the contrary that there is some $\sigma\in V_\kappa$ with $\cL(\sigma)\not\in V_\kappa$. Since $\delta$ is a closure point of $\cL$, there is $\lambda<\delta$ such that $\cL(\sigma)\in V_\lambda$. Let $j:V\to \cN$ be an elementary embedding with $\crit(j)=\kappa$, $j(\kappa)>\lambda$, and $V_\lambda\subseteq \cN$, and $\cL\restriction V_\lambda=j(\cL\restriction V_\lambda)\restriction V_\lambda$. Since $j(\sigma)=\sigma$ and $\cL\restriction V_\lambda=j(\cL\restriction V_\lambda)\restriction V_\lambda$, it follows that $j(\cL(\sigma))=\cL(\sigma)$. Since $\cL(\sigma)\not\in V_\kappa$, by elementarity, we have $j(\cL(\sigma))=\cL(\sigma)\not\in V_{j(\kappa)}$, which contradicts that $j(\kappa)>\lambda$. Thus, we have reached the desired contradiction showing that $\kappa$ is a closure point of $\cL$.

Fix a language $\tau\in V_\delta$ and a $\cL'=\cL\cup \bL_{\kappa,\omega}(\tau)$-theory $T=\bigcup_{\eta<\kappa}T_\eta$ as in the second clause. Note that, since a Woodin cardinal $\delta$ is Mahlo, $\delta$ is also a closure point of $\bL_{\kappa,\omega}$. Thus $\delta$ is a closure point of $\cL'$, and so the requirement that $\tau\in V_\delta$ implies that $T\in V_\delta$ as well. Fix $\lambda<\delta$ which is above the rank of $\tau$ and $\cL(\tau)$. Let $j:V\to \cN$ be an elementary embedding with
\begin{itemize}
\item $\crit j=\kappa$, $j(\kappa)>\lambda$,
\item $V_\lambda\subseteq \cN$,
\item $\cL\restriction V_\lambda=j(\cL\restriction V_\lambda)\restriction V_\lambda$
\item $(\vDash_{\cL})\restriction V_\lambda\times V_\lambda=j((\vDash_{\cL})\restriction V_\lambda\times V_\lambda)\restriction V_\lambda\times V_\lambda$.
\item $R\restriction V_\lambda^4=j(R)\restriction V_\lambda^4$.
\end{itemize}

Consider the sequence $j(\langle T_\eta\mid \eta<\kappa\rangle):=\langle T_\eta^*\mid \eta<j(\kappa)\rangle$. By elementarity,
$\cN$ thinks that $T_\kappa^*$ is a satisfiable $j(\cL')(j(\tau))$-theory,
and we have that $j\image T\subseteq T_\kappa^*$.

Let $M \in \cN$ be a $j(\cL')$-model of $T_\kappa^*$ in the sense of $\cN$. Let $\cM=V_\theta^{\cN}$ so that $\theta$ is a closure point of $j(\cL')$ and that $V_\theta^{\cN}\prec_{\Sigma_m} \cN$ for a very large $m$.  Define $\vDash^*$ to be the satisfaction relation $\vDash_{j(\cL')}$ from $\cN$ restricted to $V^{\cN}_\theta$.
We claim that $\hat{M}=(\cM,\in,\vDash^*, M)$ is a Henkin $\tau$-structure for $T$. It is easy to see that it satisfies clauses 1(a), 1(b) and 1(c) of Definition \ref{defn:henkin-structure}. It remains to verify clause (2).

Fix $\phi\in T$ and let $\tau_\phi$ be the fragment of $\cL(\tau)$ that occurs in $\phi$. Note that $\tau_\phi$ has size less than $\kappa$ by our assumption that $o(\cL)<\kappa$. We will argue that $M$ is a $\tau_\phi$-structure in $\cM$ via the renaming $f:\tau_\phi\to j(\tau_\phi)=j\image\tau_\phi$ defined by applying $j$ to $\tau_\phi$. Note that the renaming $f$ is an element of $\cN$, although the entire map $j:\tau\to j\image\tau$ might not be in $\cN$ (since the embedding is not a supercompactness embedding).\footnote{Note that technically the bijection $j:\tau\to j\image\tau$ is not a renaming in our sense because it is not an element of $\cN$, however it does have the required renaming properties because it is the union of a coherent collection of renamings that are elements of $\cN$.} It suffices then to argue that $f_*(\phi)=j(\phi)$ in $\cN$ since by elementarity, we know that in $\cN$, $M\vDash_{j(\mathcal L')}j(\phi)$. The argument that $f_*(\phi)=j(\phi)$ is straightforward for sentences from $\bL_{\kappa,\omega}(\tau)$, but is not obvious for sentences from $\cL(\tau)$ because we do not know how the satisfaction for $\cL$ works. So let us argue as follows.

Fix a language $\sigma\in V_\kappa$ and a renaming $g:\sigma\to\tau_\phi$. Since $\kappa$ is a closure point of $\cL$, we have $\cL(\sigma)\in V_\kappa$. Let $g_*(\bar\phi)=\phi$. By elementarity, $j(g):\sigma\to j(\tau_\phi)=j\image\tau_\phi$ is a renaming in $\cN$. It is easy to check that $j(g)=f\circ g$. Now, by elementarity, we have $$(j(g))_*(\bar\phi)=j(g_*)(\bar\phi)=j(g_*(\bar\phi))=j(\phi).$$ By our observations from Section~\ref{sec:prelim}, we have that $j(g)_*=(f\circ g)_*=f_*\circ (g_*)^{\cN}$, where $(g_*)^{\cN}$ is the unique bijection between $j(\cL)(\sigma)$ and $j(\cL)(\tau_\phi)$ as computed in $\cN$.  A priori, there is no reason to believe that the two maps which we both denoted by $g_*$ are the same.  However, we ensured that $R\restriction V_\lambda^4=j(R)\restriction V_\lambda^4$.  Since $g$ and $g_*$ are elements of $V_\lambda$, we have $f_*(\phi)=f_*(g_*(\bar \phi))=j(g)_*(\bar\phi)=j(\phi)$.


 Since $V_\lambda\subseteq M$ we have fullness up to $\lambda$ and by the properties of $j$, we have correctness up to $\lambda$ as well. \\

For the converse direction, suppose the second clause is true and fix any set $A\subseteq V_\delta$. We define the associated logic $\bL^A$ as follows. We extend $\bL^2$ by adding, for a binary relation $E$, a single new  formula $\phi_{A, E}(x)$, which holds whenever $E$ is well-founded on the transitive closure of $x$, $\text{tc}_E\,x$, and $\text{tc}_E\,x$ is isomorphic to $\text{tc}_\in\,a$ for some $a\in A$.

It is clear that $o(\bL^A) = \omega$ and $\delta$ is a closure point of $\bL^{A}$ because it was assumed to be a cardinal, so there is a cardinal $\kappa$ for $\bL^A$ as in the hypothesis, which we want to show is ${<}\delta$-strong for $A$.

Fix $\alpha>\kappa$ and fix $\lambda\gg\alpha$ below $\delta$.  Let $\tau$ be the language consisting of a binary relation $\in$ and constants $\{c_x\mid x\in V_{\alpha+1}\}\cup\{c\}$. Let $T$ be the $\bL^A\cup \bL_{\kappa,\omega}(\tau)$-theory $T$ consisting of the following sentences.
\begin{enumerate}
	\item ${\rm ED}_{\bL_{\kappa, \omega}}(V_{\alpha+1}, \in , c_x)_{x \in V_{\alpha+1}}$,
	\item $\{c_\xi \neq c< c_\kappa\mid \xi < \kappa\}$,\footnote{We use sentences $c_\xi\neq c$ instead of $c_\xi<c$ because it would be difficult to argue that the later are ${<}\kappa$-satisfiable without first verifying that $\kappa$ is regular.}
	\item Magidor's $\Phi$,
	\item $\forall x\,(x\in c_{A\cap V_\alpha}\rightarrow \phi_{A,\in}(x))$,
\item  $\forall x\, (\phi_{A,\in}(x)\wedge x\in c_{V_\alpha}\rightarrow x\in c_{A\cap V_\alpha})$.
	
\end{enumerate}

\noindent We can write $T$ as an increasing $\kappa$-union of satisfiable theories by filtrating the sentences in (2). Let $\hat{M} = (\cM,\in, \vDash^*,M)$ be a transitive Henkin $\tau$-structure for $T$ that is full up to $\lambda$ and $\bL^A$-correct up to $\lambda$.

Note once again that even though $\cM$ has only partial interpretations of $\tau$ on $M$, we get a total interpretation by unioning up the coherent interpretations from $\cM$ in $V$.

With the standard set-theoretic arguments, it suffices to produce an elementary embedding $j:V_{\alpha+1}\to M$ with $\crit(j)=\kappa$ , $V_\alpha\subseteq M$, $j(\kappa)>\alpha$ and $A\cap V_\alpha=j(A\cap V_\alpha)\cap V_\alpha$.

By the first clause in the definition of $T$, there is an elementary embedding $j:V_{\alpha+1} \to M$ (because first-order elementarity is absolute). The second clause guarantees that $j$ has a critical point $\leq\kappa$ and since every ordinal $\alpha<\kappa$ is definable in the logic $\bL_{\kappa,\omega}$, the critical point must be exactly $\kappa$. Note that the formulas used to ensure this are in $\bL_{\kappa, \omega}$, and thus reflected correctly in the transitive structure $\hat{M}$. Since $\hat M$ is transitive, it must be correct about $M$ being well-founded. So we can assume without loss that $M$ is transitive. Since $\hat{M}$ is full up to $\lambda$, it is full up to $\alpha$ and we have $V_\alpha \subseteq \cM$. But then since $\vDash^*$ in $\cM$ satisfies the basic properties of $\bL^2$, and it believes that $M\vDash^*\Phi$, it follows that $V_\alpha\subseteq M$.

Next, we will argue that for $a\in V_\alpha$, $M\vDash^* \phi_{A,\in}(a)$ if and only if $a\in A$. In $V$, the logic $\bL^A$ has the property that if $N$ is a well-founded model in $\bL^A$ and $\bar N$ is any transitive first-order submodel of $N$ such that for some transitive (from the perspective of $N$) set $B$, $B\subseteq \bar N$, then $\bar N$ and $N$ must agree on the formulas $\phi_{A,\in}(a)$ for $a\in B$. Thus, $\cM$ satisfies the same property of the relation $\vDash^*$. It follows, for our particular case, that $V_\alpha$ and $M$ agree on $\phi_{A,\in}(a)$ for $a\in V_\alpha$. But now the point is that $V_\alpha$ is small enough that the correctness of $\vDash^*$ up to $\lambda$ guarantees that $\phi_{A,\in}(a)$ holds true if and only if $a\in A$. Thus, $M$ must also be correct with respect to the formula $\phi_{A,\in}(a)$ for $a\in V_\alpha$. Since $M$ satisfies $\forall x\,(x\in c_{A\cap V_\alpha}\rightarrow \phi_{A,\in}(x))$, it follows that for $a\in V_\alpha$ and $a\in j(A\cap V_\alpha)$, we have $a\in A$. Thus, $j(A)\cap V_\alpha\subseteq A\cap V_\alpha$. Since $M$ satisfies $\forall x\, (\phi_{A,\in}(x)\wedge x\in c_{V_\alpha}\rightarrow x\in c_{A\cap V_\alpha})$, it follows that for $a\in V_\alpha$ and $a\in A$, we have $a\in j(A\cap V_\alpha)$. Thus, $A\cap V_\alpha\subseteq j(A)\cap V_\alpha$, which gives the desired equality.


\end{proof}

This sort of characterization also suggests a new hierarchy of Woodin cardinals based on the logics that they endow non-standard chain compactness to.  For instance, using V\"{a}\"{a}n\"{a}nen's sort logics, we get definable notions of Woodin cardinals.

\begin{theorem}\label{th:externallyDefinableWoodinCharacterization} The following are equivalent for a cardinal $\delta$.
\begin{enumerate}
\item $\delta$ is externally definable Woodin.
\item For every $n<\omega$ and $a\in V_\delta$, there is a cardinal $\kappa<\delta$ above $\rank(a)$ such that for every $\kappa < \lambda < \delta$ and theory $T \subseteq \bL^{s, \Sigma_n}_{\kappa,\omega}(\tau)$ for a language $\tau\in V_\lambda$, if $T$ can be written as an increasing union $T=\bigcup_{\eta<\kappa}T_\eta$ of satisfiable theories, then $T$ has a transitive $n$-correct for $V_\lambda$ Henkin $\tau$-structure  that is full up to $\lambda$.
\end{enumerate}
\end{theorem}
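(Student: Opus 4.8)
The plan is to run the argument of Theorem~\ref{thm:woodin-char} in the definable setting. The essential observation is that, unlike an arbitrary abstract logic $\cL$, sort logic $\bL^{s,\Sigma_n}_{\kappa,\omega}$ is a \emph{definable} logic: the assignment $\tau\mapsto\bL^{s,\Sigma_n}_{\kappa,\omega}(\tau)$, its satisfaction relation, and the renaming data $R$ (the class of quadruples $(\tau,\sigma,f,f_*)$ used in the proof of Theorem~\ref{thm:woodin-char}) are all given by formulas whose complexity is controlled by $n$, since by Proposition~\ref{prop:sort} the $\Sigma_n$ sort-quantifier semantics amounts to $\Sigma_n$-reflection into $V$. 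This is precisely why externally definable Woodin---which supplies strongness only for definable classes rather than for arbitrary subsets of $V_\delta$---is enough to replace the full strongness for arbitrary sets used in Theorem~\ref{thm:woodin-char}.

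\textbf{Forward direction.} Fix $n<\omega$ and $a\in V_\delta$. Code the definable classes naming $\bL^{s,\Sigma_n}_{\kappa,\omega}$, its satisfaction relation, and $R$ into a single definable class using the parameter $a$, and apply the externally definable Woodin property of $\delta$ to obtain $\kappa<\delta$ above $\rank(a)$ which is ${<}\delta$-strong (in the externally definable sense) for this class. As in Theorem~\ref{thm:woodin-char}, one first checks that $\kappa$ is a closure point (here $o(\bL^{s,\Sigma_n}_{\kappa,\omega})=\omega$, so this is immediate from a reflection argument). Given $\kappa<\lambda<\delta$ and $T=\bigcup_{\eta<\kappa}T_\eta\subseteq\bL^{s,\Sigma_n}_{\kappa,\omega}(\tau)$ with $\tau\in V_\lambda$, fix $j:V\to\cN$ with $\crit j=\kappa$, $j(\kappa)>\lambda$, $V_\lambda\subseteq\cN$, and the definable-class correctness on $V_\lambda$ for all coded classes. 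One then builds $\hat M=(\cM,\in,\vDash^*,M)$ exactly as before: form $T^*_\kappa\supseteq j\image T$ from $j(\langle T_\eta\mid\eta<\kappa\rangle)$, take a $j(\bL^{s,\Sigma_n}_{\kappa,\omega})$-model $M$ of $T^*_\kappa$ in $\cN$, set $\cM=V^\cN_\theta$ for a closure point $\theta$ with $V^\cN_\theta\prec_{\Sigma_m}\cN$, and let $\vDash^*$ be the $\cN$-satisfaction for $j(\bL^{s,\Sigma_n}_{\kappa,\omega})$ restricted to $\cM$. The coherent-renaming computation $f_*(\phi)=j(\phi)$ transfers verbatim because $R$ was preserved. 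Fullness up to $\lambda$ is $V_\lambda\subseteq\cM$, and the two clauses of $n$-correctness for $V_\lambda$ fall out of the preserved $\Sigma_n$-facts about $V_\lambda$-elements and the preserved satisfaction relation, respectively.

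\textbf{Converse direction.} Fix a formula $\phi(x,a)$ with $a\in V_\delta$ and choose $n$ with $\phi$ being $\Sigma_n$. Apply clause (2) with this $n$ and $a$ to obtain $\kappa<\delta$ above $\rank(a)$. For each $\kappa<\alpha<\delta$, fix $\lambda\gg\alpha$ below $\delta$ and, over the language $\tau$ with $\in$ and constants $\{c_x\mid x\in V_{\alpha+1}\}\cup\{c\}$, build the $\bL^{s,\Sigma_n}_{\kappa,\omega}(\tau)$-theory $T$ consisting of the $\bL_{\kappa,\omega}$-elementary diagram of $(V_{\alpha+1},\in,c_x)_{x\in V_{\alpha+1}}$, the sentences $\{c_\xi\neq c<c_\kappa\mid\xi<\kappa\}$ pinning the critical point to $\kappa$, Magidor's $\Phi$, and---in place of the ad hoc predicate $\phi_{A,\in}$ of Theorem~\ref{thm:woodin-char}---sentences that, using Proposition~\ref{prop:sort} to assert $V_\alpha\prec_{\Sigma_n}V$ in sort logic, express that the distinguished model correctly computes the definable class $\phi(\cdot,a)$ on $V_\alpha$. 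Filtrating the second group writes $T$ as an increasing $\kappa$-union of satisfiable theories, so clause (2) yields a transitive, $n$-correct-for-$V_\lambda$ Henkin structure $\hat M$ full up to $\lambda$. As before, the elementary diagram produces $j:V_{\alpha+1}\to M$ with $\crit j=\kappa$ and $V_\alpha\subseteq M$, and $n$-correctness together with the asserted $\Sigma_n$-elementarity forces $M$ and $V$ to agree about $\phi(\cdot,a)$ on $V_\alpha$. Standard set-theoretic arguments (passing to the extender derived from $j$) then upgrade this to a class embedding $j:V\to\cM$ with $\crit j=\kappa$, $j(\kappa)>\alpha$, $V_\alpha\subseteq\cM$, and $\phi(\cM,a)\cap V_\alpha=\phi(V,a)\cap V_\alpha$, which is exactly externally definable Woodin for $\phi$.

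\textbf{Main obstacle.} The crux is the complexity bookkeeping: one must verify that the correctness clause of externally definable Woodin for the single coded $\Sigma_n$-definable class is genuinely strong enough to deliver \emph{both} clauses of $n$-correctness for $V_\lambda$---in particular the agreement on the satisfaction relation $\vDash_{\bL^{s,\Sigma_n}_{\kappa,\omega}}$, whose definition must be confirmed to sit at the $\Sigma_n$ level via Proposition~\ref{prop:sort} so that the index $n$ in ``$n$-correct'' matches the $\Sigma_n$ restriction on the sort quantifiers. Getting a single $\kappa$ to handle the logic map, the satisfaction relation, and the renaming data $R$ simultaneously---by folding them into one definable class fed to externally definable Woodin---is the technical heart that makes the otherwise routine transfer of the Theorem~\ref{thm:woodin-char} argument go through.
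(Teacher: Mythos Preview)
Your overall strategy is correct and matches the paper's: run the Theorem~\ref{thm:woodin-char} argument using a definable class in place of an arbitrary $A\subseteq V_\delta$, and in the converse replace $\phi_{A,\in}$ by a sort-logic formula $\phi_{\varphi,\in}$ built from the given $\Sigma_n$-formula $\varphi(x,a)$.

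The execution of the forward direction, however, differs from the paper's in a way worth noting. You bundle the logic map, the satisfaction relation, and the renaming class $R$ into one coded definable class and then worry (in your ``main obstacle'' paragraph) about whether preserving this bundle at level $\Sigma_n$ really yields both clauses of $n$-correctness. The paper bypasses this bookkeeping entirely by taking the single definable class to be the $\Sigma_n$-truth predicate
\[
A=\{(\ulcorner\varphi(x)\urcorner,b)\mid \varphi\text{ is }\Sigma_n\text{ and }V\vDash\varphi(b)\}.
\]
From $A\cap V_\lambda=j(A)\cap V_\lambda$ one reads off directly that $\cN$ (hence $V_\theta^\cN$) agrees with $V$ on all $\Sigma_n$-facts about parameters in $V_\lambda$, which is exactly the first clause of $n$-correctness; the second clause is automatic from elementarity of $j$ together with $V_\theta^\cN\prec_{\Sigma_m}\cN$ for $m>n$. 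This dissolves your main obstacle rather than confronting it. The paper also observes that, because the syntax of $\bL^{s,\Sigma_n}_{\kappa,\omega}$ is concrete, the renaming computation $f_*(\phi)=j(\phi)$ is immediate and there is no need to preserve $R$ at all. (One small slip: $o(\bL^{s,\Sigma_n}_{\kappa,\omega})=\kappa$, not $\omega$, since ${<}\kappa$-ary conjunctions can mention ${<}\kappa$ many symbols; this is harmless for the argument.)

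Your converse is essentially the paper's: the paper phrases it as replacing $\phi_{A,\in}(x)$ by $\phi_{\varphi,\in}(x)$ asserting that $\text{tc}(x)$ is isomorphic to $\text{tc}(b)$ for some $b$ with $\varphi(b,a)$, expressible in $\bL^{s,\Sigma_n}_{\kappa,\omega}$ since $\varphi$ is $\Sigma_n$ and $a$ is definable by an infinitary atomic formula (as $\rank(a)<\kappa$). The verification that $M\vDash^*\phi_{\varphi,\in}(b)$ iff $V\vDash\varphi(b,a)$ for $b\in V_\alpha$ then comes straight from $n$-correctness, exactly as you indicate.
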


\begin{proof}
Suppose $\delta$ is externally definable Woodin.  Fix $n<\omega$.  Let $$A=\{(\ulcorner\varphi(x)\urcorner,b)\mid \varphi(x)\text{ is }\Sigma_n\text{ and }V\vDash\varphi(b)\},$$
and note that $A$ is definable using the $\Sigma_n$-truth predicate.  Fix a language $\tau$ and let $\kappa>\rank(\tau),\rank(a)$ be as in the definition of externally definable Woodin cardinals for the definable class $A$. Fix a $\bL^{s,\Sigma_n}_{\kappa,\omega}(\tau)$-theory $T$ as in (2). Pick a limit $\lambda>\kappa$ below $\delta$ such that $T\in V_\lambda$. Let \hbox{$j:V\to \cN$} be an elementary embedding with $\crit j=\kappa$, $j(\kappa)>\lambda$, $V_\lambda\subseteq \cN$, and $A\cap V_\lambda=j(A)\cap V_\lambda$.

Now consider the sequence $j(\langle T_\eta\mid \eta<\kappa\rangle)=\langle T_\eta^*\mid \eta<j(\kappa)\rangle$. By elementarity, it is an increasing sequence, and hence, $T_\kappa^*$ is a theory containing $j\image T$. So, by elementarity, $\cN$ thinks that $T_\kappa^*$ has a model $M$ in $\bL^{s,\Sigma_n}_{\kappa,\omega}$ (technically it would have to be $\bL^{s,\Sigma_n}_{j(\kappa),\omega}$, but we can always prune to get rid of the extra assertions). Let $\cM=V_\theta^{\cN}\prec_{\Sigma_m} \cN$ for some large enough $\theta$ and $m>n$, and let $\vDash^*$ be the satisfaction $\vDash_{\bL^{s,\Sigma_n}_{\kappa,\omega}}$ of $\cN$ restricted to $V^{\cN}_\theta$. We claim that $\hat M=(V_\theta^\cN,\in,\vDash^*,M)$ is an $n$-correct for $V_\lambda$ Henkin $\tau$-structure for $T$.

The argument that $\hat M$ is a Henkin $\tau$-structure for $T$ is even easier than in the proof of Theorem~\ref{thm:woodin-char} because we know exactly what the formulas in $\bL^{s,\Sigma_n}_{\kappa,\omega}$ look like. So it remains to check $n$-correctness for $V_\lambda$. Since $A$ is definable and $j$ is elementary, we have
$$j(A) = \{(\ulcorner\varphi(x)\urcorner,b)\mid\varphi(x)\text{ is }\Sigma_n\text{ and }\cN\vDash\varphi(b)\}$$
Also, since $\lambda$ is limit, we have that
$$A \cap V_\lambda =\{(\ulcorner\varphi(x)\urcorner,b)\mid b\in V_\lambda,\, \varphi(x)\text{ is }\Sigma_n\text{ and }V\vDash\varphi(b)\}$$
and similarly for $j(A) \cap V_\lambda$. By our assumptions on $j$, $A\cap V_\lambda=j(A)\cap V_\lambda$. But this means precisely that for $a\in V_\lambda$, we have $V\vDash\varphi(a)$ if and only if $\cN\vDash\varphi(a)$ if and only if $V_\theta^{\cN}\vDash\varphi(a)$.

For the converse direction, we fix a $\Sigma_n$-formula $\varphi(x,y)$ and a parameter $a\in V_\delta$.  Let $\kappa<\delta$ be as in the statement of (2) for $n$ and $a$.  Fix $\kappa < \alpha < \delta$.  We adapt the corresponding argument from Theorem \ref{thm:woodin-char}. We replace the assertion $\phi_{A,\in}(x)$ of that proof with the formula $\phi_{\varphi,E}(x)$ asserting that the transitive closure of $x$ is isomorphic to the transitive closure of an element $b$ such that $\phi(b,a)$. To express $\phi_{\varphi,E}(x)$ in the logic $\bL^{s,\Sigma_{n}}_{\kappa,\omega}$ we use that $\varphi(x,y)$ is $\Sigma_n$ and the parameter $a$ is definable using an infinitary atomic formula since it has rank below $\kappa$. So we can let $T$ be the theory as in that proof, but expressed using the sort logic $\bL^{s, \Sigma_{n}}_{\kappa,\omega}$.  By assumption $T$ must have an $n$-correct for $V_\lambda$ Henkin $\tau$-structure $\hat{M}=(\cM,\in,\vDash^*,M)$ that is full up to $\lambda$ for $\lambda\gg\alpha$.   As in that proof we get an elementary embedding $j:V_{\alpha+1}\to M$ with $\crit j=\kappa$.  So it remains to verify that $M\vDash^*\phi_{\varphi,\in}(b)$ for $b\in V_\alpha$ if and only if $\varphi(b,a)$ holds in $V$. Since by $n$-correcteness, $\cM$ is correct about the $\Sigma_{n}$ properties of $\vDash^*$, it recognizes that $\vDash^*$ is satisfaction for $\bL^{s,\Sigma_n}_{\kappa,\omega}$. Thus, $\cM$ believes that $M\vDash^*\phi_{\varphi,\in}(b)$ if and only if $\varphi(b,a)$ holds, and again, by $n$-correctness, $\cM$ is correct about $\varphi(b,a)$ for $b\in V_\alpha$.

\end{proof}
Note that the proof of Theorem~\ref{th:externallyDefinableWoodinCharacterization} shows that we could have additionally required the Henkin $\tau$-structure of Theorem~\ref{thm:woodin-char} to be $n$-correct for a specified $n$.

Next, we observe that if we relax clause (2) of Theorem~\ref{thm:woodin-char} to just ${<}\kappa$-satisfiable theories, then we get a characterisation of Woodin for strong compactness cardinals.

\begin{theorem}\label{thm:wfsc-char}
The following are equivalent for a cardinal $\delta$.
\begin{enumerate}
	\item $\delta$ is Woodin for strong compactness.
	\item For every logic $(\cL, \vDash_\cL)$ with closure point $\delta$,there is $\kappa < \delta$ such that given any $\kappa < \lambda < \delta$ and theory $T \subseteq \cL \cup \bL_{\kappa, \omega}(\tau)$ with $\tau\in V_\lambda$, if $T$ is ${<}\kappa$-satisfiable, then $T$ has a transitive Henkin $\tau$-structure that is full up to $\lambda$ and $\cL$-correct up to $\lambda$.
\end{enumerate}
\end{theorem}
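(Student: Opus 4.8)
The plan is to mirror the proof of Theorem~\ref{thm:woodin-char} closely, replacing the use of $\delta$-Woodinness (giving ${<}\delta$-strong cardinals) with the use of $\delta$ Woodin for strong compactness (giving cardinals that are simultaneously ${<}\delta$-strong \emph{and} ${<}\delta$-strongly compact for $A$, witnessed by the same embedding, per the characterization attributed to \cite{d-wfsc}). The key structural difference is that we now start from a ${<}\kappa$-satisfiable theory $T$ rather than an increasing $\kappa$-union of satisfiable theories, and we must still extract a single satisfiable piece in the target model. For the forward direction, I would fix a logic $\cL$ with closure point $\delta$, set $\kappa>o(\cL)$ to be ${<}\delta$-strong \emph{and} ${<}\delta$-strongly compact for the coding sets $\cL\restriction V_\delta$, $\vDash_\cL\restriction V_\delta$, and $R\restriction V_\delta$ (where $R$ codes the renaming data as before). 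The argument that $\kappa$ is a closure point of $\cL$ carries over verbatim, as does the fact that $\delta$ is a closure point of $\cL'=\cL\cup\bL_{\kappa,\omega}(\tau)$, so $T\in V_\delta$.

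\textbf{Where strong compactness does the work.}
Given a ${<}\kappa$-satisfiable theory $T\in V_\lambda$, fix $\lambda<\delta$ above the rank of $\tau$ and $\cL(\tau)$, and take an embedding $j:V\to\cN$ with $\crit j=\kappa$, $j(\kappa)>\lambda$, $V_\lambda\subseteq\cN$, the usual agreement clauses $\cL\restriction V_\lambda=j(\cL\restriction V_\lambda)\restriction V_\lambda$, $(\vDash_\cL)\restriction V_\lambda^2=j(\cdots)\restriction V_\lambda^2$, $R\restriction V_\lambda^4=j(R)\restriction V_\lambda^4$, \emph{and additionally} a set $s\in\cN$ with $|s|^\cN<j(\kappa)$ and $j\image\lambda\subseteq s$ coming from strong compactness. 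The crucial step is to show $\cN\vDash$ ``$j\image T$ is satisfiable''. Since $T$ is ${<}\kappa$-satisfiable in $V$, for every $T_0\in\cP_\kappa T$ there is a model, and by elementarity $\cN$ believes $j(T)$ is ${<}j(\kappa)$-satisfiable. I would use $s$ to capture $j\image T$ inside a set of $\cN$-size ${<}j(\kappa)$: because $T\in V_\lambda$ and $\tau\in V_\lambda$, the image $j\image T$ is covered by an $\cN$-set of size ${<}j(\kappa)$ derived from $s$, so $\cN$ sees a ${<}j(\kappa)$-sized subtheory $T^*\supseteq j\image T$ of $j(T)$; by ${<}j(\kappa)$-satisfiability in $\cN$, this $T^*$ has a model $M\in\cN$.

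\textbf{Building the Henkin structure and the converse.}
From here the construction of $\hat M=(V_\theta^\cN,\in,\vDash^*,M)$ with $\theta$ a closure point of $j(\cL')$ chosen $\Sigma_m$-elementary in $\cN$, and $\vDash^*$ the restriction of $\vDash_{j(\cL')}$, is identical to Theorem~\ref{thm:woodin-char}: clauses 1(a)--1(c) are immediate, and clause (2)---the coherent partial $\tau$-structure interpretation via the renaming $f:\tau_\phi\to j\image\tau_\phi$ together with $f_*(\phi)=j(\phi)$---uses only the renaming-agreement $R\restriction V_\lambda^4=j(R)\restriction V_\lambda^4$ and the closure-point property of $\kappa$, exactly as before. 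Fullness and $\cL$-correctness up to $\lambda$ follow from $V_\lambda\subseteq\cN$ and the satisfaction-agreement clause. For the converse, I would run the converse of Theorem~\ref{thm:woodin-char} with the same logic $\bL^A$, but now arranging $T$ to be ${<}\kappa$-satisfiable rather than a $\kappa$-chain, and read off from $\hat M$ an embedding $j:V_{\alpha+1}\to M$ with $\crit j=\kappa$, $V_\alpha\subseteq M$, $j(\kappa)>\alpha$, $A\cap V_\alpha=j(A)\cap V_\alpha$, \emph{plus} a covering set $s$ witnessing strong compactness; the latter is extracted from the interpretation of an auxiliary constant in $\tau$ asserting the existence of a set of $M$-size ${<}\kappa$ containing $j\image\alpha$.

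\textbf{The main obstacle.}
The hard part will be verifying that the covering witness $s$ survives passage through the nonstandard Henkin structure, i.e.\ that the relevant ${<}\kappa$-satisfiability in $V$ translates into an honest ${<}j(\kappa)$-sized satisfiable subtheory inside $\cN$ containing \emph{all} of $j\image T$ simultaneously, rather than merely each finite fragment. This is precisely where the strong compactness clause (the existence of $s$ with $j\image\lambda\subseteq s$ and $|s|^\cN<j(\kappa)$) replaces the increasing-union bookkeeping of Theorem~\ref{thm:woodin-char}; I expect the delicate point to be confirming that $j\image T$ is genuinely an element of an $\cN$-set of size ${<}j(\kappa)$, using that $T$ and $\tau$ lie in $V_\lambda$ and that $s$ covers $j\image\lambda$, and then invoking the equivalent characterization of Woodin for strong compactness to ensure the \emph{same} embedding delivers both the strongness agreement $A\cap V_\alpha=j(A)\cap V_\alpha$ and the covering set.
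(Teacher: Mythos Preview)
Your proposal is correct and follows essentially the same approach as the paper's own proof: both directions reduce to the proof of Theorem~\ref{thm:woodin-char}, with the forward direction using the equivalent characterization of Woodin for strong compactness (from \cite{d-wfsc}) to obtain a single embedding that simultaneously gives the strongness agreement and a covering set $s\in\cN$ with $j\image\lambda\subseteq s$ and $|s|^\cN<j(\kappa)$, so that $t\cap j(T)$ (for a suitable cover $t$) is a ${<}j(\kappa)$-sized subtheory of $j(T)$ containing $j\image T$; and with the converse augmenting the language by a constant $s$ together with the sentences $\{c_\xi\in s\mid\xi<\lambda\}$ and $|s|<c_\kappa$, so that the interpretation of $s$ in the Henkin model witnesses the covering property. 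Your identification of the ``main obstacle'' is exactly the point the paper addresses, and the resolution you sketch is the one used.
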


\begin{proof}
For (1) implies (2), the only difference from Theorem \ref{thm:woodin-char}, is that the embedding $j:V\to\cN$ that we use can be assumed to satisfy the strong compactness $\lambda$-covering property. Thus, there is a set $s\in\cN$ such that $j\image \lambda\subseteq s$ and $|s|^{\cN}<j(\kappa)$. It follows that there is a set $t\in \cN$ such that $j\image T\subseteq t$ and $|t|^{\cN}<j(\kappa)$. Then, $t\cap j(T)$ is a $j(\cL)$-theory containing $j\image T$ and we use the fact that by elementarity, $j(T)$ is ${<} j(\kappa)$-satisfiable in the sense of $\cN$.

For (2) implies (1), if we fix $A\subseteq V_\delta$, the same proof as for Theorem \ref{thm:woodin-char} can be used to show that there is $\kappa<\delta$ which is ${<}\delta$-strong for $A$. We fix $\kappa<\alpha\ll\lambda<\delta$. We augment our language there by a constant $s$ and augment our theory $T$ to contain statements $\{c_\xi\in s\mid \xi<\lambda\}$ and the statement $|s|<c_\kappa$. The theory $T$ is clearly ${<}\kappa$-satisfiable since we can always satisfy ${<}\kappa$-many statements $c_\xi\in s$ together with $|s|<c_\kappa$. Clearly the embedding $j:V_{\alpha+1}\to \cM$ is then a $\lambda$-strong compactness embedding as witnessed by the interpretation of $s$.

\end{proof}

We could ask about pushing these results higher, mixing characterizations from \cite{m-roleof,b-sccomp,b-mtlc} with our new notion of abstract Henkin structures to find model-theoretic characterizations for Woodin for supercompactness (see \cite{as-universal,a-sargsyan}) or prospective notions of Woodin for extendibility.  However, Perlmutter \cite[Theorem 5.10]{p-sc-ah} has shown that Woodin for supercompactness is equivalent to being a Vop\v{e}nka cardinal, which has a compactness characterization due to Makowsky \cite[Theorem 2]{m-vopcomp}, namely, $\kappa$ is Vop\v{e}nka if and only if  $V_\kappa \vDash \text{``Every abstract logic has a strong compactness cardinal"}$.

\section{Virtual large cardinals}\label{sec:virtual}
In this section, we will give compactness characterizations for various virtual large cardinals using a new notion of pseudo-models.

Sometimes to prove that a cardinal $\kappa$ is some virtual large cardinal from a given compactness assumption, we will need to argue that the compactness assumption yields the existence of a \emph{virtual $\cL$-embedding} $j:M\to N$ (with $M,N\in V$) for an abstract logic $\cL$. Let us explain what we mean by virtual $\cL$-embeddings. Having fixed a language $\tau$, we have that both $\cL(\tau)$ and $\vDash_{\cL}$ (restricted to $M\cup N$) are sets. Using these two actual sets from $V$, we can interpret $\cL$ (restricted to these models) in a forcing extension of $V$ de re (using the sets from V) and not de dicto (using its definition from $V$ as interpreted by the forcing extension). In this way, a virtual $\cL$-elementary embedding $j:M\to N$ has the property that for every formula $\varphi(x)$ in the set $\cL(\tau)$ and $a\in M$, $M$ satisfies $\varphi(a)$ according to $\vDash_{\cL}$ if and only if $N$ satisfies $\varphi(j(a))$ according to $\vDash_{\cL}$. We should note here that de dicto interpretations of an abstract logic in a forcing extension, using its $V$-definition, might yield a logic with entirely different properties. For instance the logic $\bL_{\kappa,\kappa}$ for a weakly compact $\kappa$ defined using the parameter $\kappa$ in $V$ will turn into the logic $\bL_{\omega_1,\omega_1}$ in a forcing extension by the L\'{e}vy collapse ${\rm Coll}(\omega,{<}\kappa)$.

The following simple observation about virtual embeddings will be used repeatedly.
\begin{obs}\label{obs:kunen}
Suppose $j:V_\alpha\to M$ is a virtual embedding with $\crit j=\kappa$ and $V_\alpha$ has a bijection $f:\kappa\to A$. Then $j\image A\in V$.
\end{obs}
\begin{proof}
For $a\in A$, given $f(\xi)=a$, we have $j(a)=j(f)^{-1}(\xi)$. Since $M\in V$ by assumption, we have both $f$ and $j(f)$ in $V$, and therefore we can recover $j\image A$.
\end{proof}
We start with virtually extendible cardinals. Magidor showed\footnote{Magidor only explicitly dealt with the first strong compactness cardinal of $\bL^2$, but the arguments easily give what is claimed here.} that extendible cardinals $\kappa$ are precisely the strong compactness cardinals for the second-order infinitary logic $\bL^2_{\kappa,\kappa}$ \cite{m-roleof}. Indeed, his arguments show that if $\kappa$ is a chain compactness cardinal for $\bL^2_{\kappa,\kappa}$, then $\kappa$ is extendible, and hence, we have the following equivalence.
\begin{theorem}[Magidor] The following are equivalent for a cardinal $\kappa$.
\begin{enumerate}
\item $\kappa$ is extendible.
\item $\kappa$ is a strong compactness cardinal for $\bL^2_{\kappa,\kappa}$.
\item $\kappa$ is a chain compactness cardinal for $\bL^2_{\kappa,\kappa}$.
\end{enumerate}
\end{theorem}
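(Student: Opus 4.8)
The plan is to prove the cycle (1)$\Rightarrow$(2)$\Rightarrow$(3)$\Rightarrow$(1), where (2)$\Rightarrow$(3) is the soft implication and the real content lies in (3)$\Rightarrow$(1), since there the hypothesis has been weakened from strong to chain compactness. For (1)$\Rightarrow$(2), suppose $\kappa$ is extendible and let $T$ be a ${<}\kappa$-satisfiable $\bL^2_{\kappa,\kappa}(\tau)$-theory. I would fix a closure point $\alpha$ of the relevant data with $|T|<\alpha$, $T\in V_\alpha$, and $V_\alpha\prec_{\Sigma_n}V$ for $n$ large enough that ``${<}\kappa$-satisfiability'' reflects, and apply extendibility to get $j:V_\alpha\to V_\beta$ with $\crit j=\kappa$ and $j(\kappa)>\alpha$. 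By elementarity $j(T)$ is ${<}j(\kappa)$-satisfiable in $V_\beta$; since $j(\kappa)>\alpha>|T|$, the subtheory $j\image T\subseteq j(T)$ has size ${<}j(\kappa)$ and lies in $V_\beta$ (the embedding can be taken with $j\restriction V_\gamma\in V_\beta$ for $\gamma<\alpha$), so it has a model $N\in V_\beta$. The decisive point, special to second-order logic, is that $V_\beta$ is a genuine rank-initial segment and hence computes $\bL^2$-satisfaction standardly, so $N$ really is a second-order model of $j\image T$; pulling $N$ back along the renaming $j:\tau\to j\image\tau$ produces a model of $T$ in $V$.

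For (2)$\Rightarrow$(3), I would first note that a strong compactness cardinal of $\bL^2_{\kappa,\kappa}$ must be regular (otherwise a cofinal sequence of length $\cf\kappa<\kappa$ together with suitable constants yields a ${<}\kappa$-satisfiable but unsatisfiable theory, contradicting (2)). Given an increasing chain $T=\bigcup_{\eta<\kappa}T_\eta$ of satisfiable theories, any subtheory of size ${<}\kappa$ lies, by regularity, inside a single $T_\eta$ and is therefore satisfiable; thus $T$ is ${<}\kappa$-satisfiable and (2) applies to give a model.

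The crux is (3)$\Rightarrow$(1). Fixing $\alpha>\kappa$, I would build an $\bL^2_{\kappa,\kappa}(\tau)$-theory $T$ in the language with $\in$, constants $c_x$ for $x\in V_{\alpha+1}$, and one extra constant $c$, asserting: Magidor's $\Phi$ (so any model is isomorphic to a transitive $V_\beta$); the first-order elementary diagram ${\rm ED}(V_{\alpha+1},\in,x)_{x\in V_{\alpha+1}}$ (so that $j(x)=c_x^M$ is elementary); and the critical-point sentences $\{c\neq c_\xi\mid\xi<\kappa\}\cup\{c<c_\kappa\}$. Filtering the $\kappa$-many sentences $c\neq c_\xi$ exhibits $T$ as an increasing $\kappa$-chain, and each rung is standardly satisfiable by taking $V_{\alpha+1}\prec V_\gamma$ and interpreting $c$ as an ordinal below $\kappa$ avoiding the ${<}\kappa$-many currently forbidden values. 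Chain compactness then yields a genuine model $M$, which $\Phi$ forces to be a transitive $V_\beta$ with $j:V_{\alpha+1}\to V_\beta$ elementary; the $c$-sentences force $\crit j=\kappa$ (each ordinal $<\kappa$ being pinned down by an infinitary formula), fullness and transitivity give $V_\alpha\subseteq V_\beta$, and from this one extracts $j(\kappa)>\alpha$, so $\kappa$ is $\alpha$-extendible.

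The main obstacle is precisely this last step: forcing $j(\kappa)>\alpha$ in the full theory while keeping every chain-rung \emph{standardly} satisfiable. The tension is that the cheap witnesses for the rungs (near-identity embeddings $V_{\alpha+1}\prec V_\gamma$) do not move $\kappa$ far, so ``$j(\kappa)>\alpha$'' cannot be imposed by order relations among the $c_\xi$ — such sentences would be inconsistent for $\xi>\kappa$, since then $j(\xi)>j(\kappa)$. Instead the largeness of $j(\kappa)$ must emerge only in the limit, from $M$ being a correct transitive $V_\beta$ that genuinely extends $V_\alpha$ (possibly after replacing $\alpha$ by a slightly larger ordinal to obtain the strict inequality). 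This is exactly where the second-order strength is indispensable — through $\Phi$ and the standardness of $\bL^2$-satisfaction in rank-initial segments — and it is the content that Magidor's original argument supplies.
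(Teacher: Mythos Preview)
The paper does not give its own proof of this theorem; it attributes the result to Magidor and simply remarks that ``his arguments show that if $\kappa$ is a chain compactness cardinal for $\bL^2_{\kappa,\kappa}$, then $\kappa$ is extendible.'' So there is nothing in the paper to compare against directly, and I assess your outline on its own merits.

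Your arguments for (1)$\Rightarrow$(2) and (2)$\Rightarrow$(3) are essentially fine. One small slip in the setup of (3)$\Rightarrow$(1): you write ``first-order elementary diagram'' but then rely on each ordinal below $\kappa$ being ``pinned down by an infinitary formula''; you need the $\bL_{\kappa,\omega}$-elementary diagram for that (as in the paper's later Theorem~\ref{ve-thm}).

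The genuine gap is exactly where you place it, and your closing paragraph does not fill it. From chain compactness your theory yields, for each $\alpha$, an elementary $j:V_{\alpha+1}\to V_\beta$ with $\crit j=\kappa$---that is, $\kappa$ is \emph{weakly} extendible---but nothing in your argument forces $j(\kappa)>\alpha$. You correctly observe that one cannot insert sentences of the form ``$d_\xi<c_\kappa$'' for $\xi\leq\alpha$ while keeping the chain-rungs satisfiable by near-identity embeddings, and the vague appeal to ``$V_\alpha\subseteq V_\beta$'' does not help: $\beta>\alpha$ says nothing about $j(\kappa)$. The missing ingredient is Kunen's inconsistency. As the paper itself remarks in Section~\ref{sub:largecardinals}, in the non-virtual setting ``we can argue using Kunen's Inconsistency that the assumption $j(\kappa)>\alpha$ is superfluous,'' i.e., weak extendibility implies extendibility. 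This is precisely why, in the virtual world where Kunen fails, the paper's Theorems~\ref{ve-thm} and~\ref{wve-thm} diverge: $\kappa^+$-pseudo-\emph{chain} compactness for $\bL^2_{\kappa,\kappa}$ characterizes only \emph{weakly} virtually extendible cardinals, whereas full $\kappa^+$-pseudo-compactness is needed for virtually extendible. Your (3)$\Rightarrow$(1) should therefore conclude ``$\kappa$ is weakly extendible'' from the compactness argument and then invoke Kunen's inconsistency as a separate step to upgrade to extendibility, rather than hoping the inequality $j(\kappa)>\alpha$ will materialize from $\Phi$ alone.
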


We will adapt Magidor's characterization to virtually extendible cardinals using a new notion of pseudo-models. The notion of a ``forth system" below is simply the forward direction of back-and-forth systems from model theory, see Definition \ref{game-def}.

\begin{defin} \label{forth-def}
Let $\mathcal L$ be a logic, and $\tau$ and $\tau^*$ be languages. Let $T$ be an $\cL(\tau)$-theory, $\cM$ be a $\tau^*$-structure, and $\delta$ be a cardinal.
\begin{enumerate}
	\item A \emph{$\delta$-forth system} $\cF$ from $\tau$ to $\tau^*$ is a collection of renamings $f:\sigma\to\sigma^*$ with $\sigma\in \cP_\delta \tau$ and $\sigma^*\in\cP_\delta \tau^*$ satisfying the following properties.
	\begin{enumerate}
		\item $\emptyset \in \cF$.
		\item If $f \in \cF$ and $\tau_0 \in \cP_\delta \tau$, then there is $g \in \cF$ with $f \subseteq g$ and $\tau_0 \subseteq \dom g$.
	\end{enumerate}
	\item $\cM$ is a \emph{$\delta$-pseudo-model for $T$} if there is a $\delta$-forth system $\cF$ from $\tau$ to $\tau^*$ such that for every $f \in \cF$, $\cM$ is a model of $f_*\image (T\cap \cL(\dom f)$).
\end{enumerate}
\end{defin}
\noindent We will call $\omega$-forth systems and $\omega$-pseudo-models simply forth systems and pseudo-models respectively.  

Cleary, any actual model is a pseudo-model as well, but (as seen in the following example) there are unsatisfiable theories with a $\omega_1$-pseudo-model (and this example can be generalized).

\begin{example}
Fix the language $\tau = \{R, c_\alpha : \alpha < \omega_1\}$ with $R$ a unary predicate, and let $Q$ be the quantifier `there exists uncountably many.'  Consider the theory
$$T=\left\{\neg Qx R(x), R(c_\alpha), c_\alpha \neq c_\beta: \alpha < \beta < \omega_1\right\}$$
This theory demands that $R$ is both uncountable and countable, so is unsatisfiable.  To find an $\omega_1$-pseudo model, set $\tau_* = \{S, d_n :n < \omega\}$ and $M = \seq{\omega_1; \omega, n}_{n<\omega}$.  To define the forth system $\cF$, take any $\sigma \in \cP_{\omega_1}\tau$ (written as $\{R, c_\alpha : \alpha \in S_\sigma\}$ for countable $S_\sigma \subset \omega_1$) and injection $\pi:S_\sigma \to \omega$ so $\im \pi$ is countable and cocountable.  We can define $f^{\sigma, \pi}:\sigma \to \tau_*$ by $f^{\sigma, \pi}(R)=S$ and $f^{\sigma, \pi}(c_\alpha) = d_{\pi(\alpha)}$.  Then the collection of all such $f^{\sigma, \pi}$ is a forth system from $\tau$ to $\tau_*$ (the cocountability of $\im \pi$ is key).  Further, 
$${f^{\sigma,\pi}_*}''\left(T \cap \bL(Q)(\sigma)\right) = \left\{\neg Qx R(x), R(d_n), d_n\neq d_m : n < m \in \im \pi\right\}$$
Then $M$ models this.
\end{example}

\begin{defin}
Let $\cL$ be a logic and $\kappa$, $\delta$ be cardinals.
\begin{enumerate}
\item $\kappa$ is a \emph{$\delta$-pseudo-compactness cardinal for $\cL$} if every ${<}\kappa$-satisfiable $\cL$-theory has a $\delta$-pseudo model.
\item $\kappa$ is a \emph{$\delta$-pseudo-chain compactness cardinal for $\cL$} if every $\cL$-theory $T$, which can be written as an increasing union $T=\bigcup_{\eta<\kappa}T_\eta$ of satisfiable theories, has a $\delta$-pseudo-model.
\end{enumerate}

\end{defin}
Note that a $\kappa^+$-pseudo-chain compactness cardinal for a logic $\cL$ is a weak compactness cardinal for $\cL$. We will see below that the converse fails to hold.

\begin{theorem}\label{ve-thm}
The following are equivalent for a cardinal $\kappa$.
\begin{enumerate}
\item $\kappa$ is virtually extendible.
\item $\kappa$ is a $\kappa^+$-pseudo-compactness cardinal for $\bL^2_{\kappa,\kappa}$.
\item $\kappa$ is an $\omega$-pseudo-compactness cardinal for $\bL^2_{\kappa,\kappa}$.
\item $\kappa$ is an $\omega$-pseudo-compactness cardinal for $\bL^2 \cup \bL_{\kappa, \omega}$.
\end{enumerate}
\end{theorem}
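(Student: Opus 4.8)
The plan is to prove the cycle $(1)\Rightarrow(2)\Rightarrow(3)\Rightarrow(4)\Rightarrow(1)$, in which the two middle implications are the easy ``shrinking'' steps and the first and last are the substantive ones. For $(2)\Rightarrow(3)$, given a ${<}\kappa$-satisfiable theory, take the $\kappa^+$-pseudo-model provided by (2) and observe that restricting every renaming in its $\kappa^+$-forth system to its finite subdomains produces an $\omega$-forth system witnessing the same model as an $\omega$-pseudo-model; the extension property passes down because one first extends in the larger system and then restricts. For $(3)\Rightarrow(4)$, note $\bL^2\cup\bL_{\kappa,\omega}\subseteq\bL^2_{\kappa,\kappa}$, so any ${<}\kappa$-satisfiable $\bL^2\cup\bL_{\kappa,\omega}$-theory is a ${<}\kappa$-satisfiable $\bL^2_{\kappa,\kappa}$-theory, and the $\omega$-pseudo-model handed back by (3) is, restricted to the smaller set of sentences, an $\omega$-pseudo-model for the original logic.

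For $(1)\Rightarrow(2)$, I would imitate Magidor's argument through a virtual embedding. Given a ${<}\kappa$-satisfiable $\bL^2_{\kappa,\kappa}(\tau)$-theory $T$, choose $\alpha$ with $T,\tau\in V_\alpha$ and, using virtual extendibility, a virtual $j\colon V_\alpha\to V_\beta$ with $\crit j=\kappa$ and $j(\kappa)>\alpha$, living in some $V[G]$. By elementarity $V_\beta$ believes $j(T)$ is ${<}j(\kappa)$-satisfiable, and since $j(\kappa)$ is inaccessible in $V_\beta$ while $|T|\le|V_\alpha|<j(\kappa)$, the subtheory $j\image T\subseteq j(T)$ has size ${<}j(\kappa)$ and hence a model $\cN\in V_\beta\subseteq V$. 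The required $\kappa^+$-forth system is assembled from the restrictions $j\restriction\sigma$ for $\sigma\in\cP_{\kappa^+}\tau$: each such restriction lies in $V$ by Observation~\ref{obs:kunen}, and for $\phi\in T\cap\cL(\sigma)$ one has $(j\restriction\sigma)_*(\phi)=j(\phi)$, which $\cN$ satisfies because $\cN\models j\image T$. The point requiring care here is to verify that this collection of restrictions is itself a forth system lying in $V$ with the extension property, for which Observation~\ref{obs:kunen} is exactly the tool.

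For $(4)\Rightarrow(1)$, the substantive direction, fix $\alpha>\kappa$ and build an $\bL^2\cup\bL_{\kappa,\omega}$-theory $T$ in the language with a binary $\in$, constants $c_x$ for $x\in V_{\alpha+1}$, and auxiliary constants $d_i$ $(i\le\alpha)$, consisting of: Magidor's $\Phi$; the full first-order elementary diagram of $(V_{\alpha+1},\in,x)_{x\in V_{\alpha+1}}$; the $\bL_{\kappa,\omega}$-sentences pinning $c_\xi$ to be the $\xi$-th ordinal for each $\xi<\kappa$ (using that such ordinals are $\bL_{\kappa,\omega}$-definable); and the ``$\ne$''-gadget $\{d_i<c_\kappa:i\le\alpha\}\cup\{d_i\ne d_{i'}:i<i'\le\alpha\}$. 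Every sentence mentions only finitely many symbols, which is essential so that the $\omega$-forth system can reach each of them. The theory is ${<}\kappa$-satisfiable: any ${<}\kappa$-sized fragment mentions fewer than $\kappa$ of the $d_i$ and pins fewer than $\kappa$ ordinals below $\kappa$, so the identity embedding of $V_{\alpha+1}$ into itself, with the finitely-constrained $d_i$ sent to distinct ordinals below $\kappa$, realizes it. Hence (4) supplies an $\omega$-pseudo-model $\cN$ for $T$ together with a forth system $\cF$.

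To extract the embedding, force with $\mathrm{Coll}(\omega,|V_{\alpha+1}|)$ to make $\tau$ countable. In $V[G]$, use the extension property of $\cF$ to thread an increasing $\omega$-chain of renamings whose union is a total renaming $f\colon\tau\to\tau^*$; since each $\phi\in T$ uses finitely many symbols, $\cN\models f_*(\phi)$, so transferring $\cN$ across $f$ yields a genuine $\tau$-model $\cN'\models T$ in $V[G]$. Now $\Phi$ forces $\cN'$ to be well-founded and isomorphic to a genuine $V_\beta\in V$, the elementary diagram makes $x\mapsto c_x^{\cN'}$ an elementary $j\colon V_{\alpha+1}\to V_\beta$, the pinning sentences give $\crit j=\kappa$, and the $d_i$-gadget forces $c_\kappa^{\cN'}=j(\kappa)$ to have at least $\alpha+1$ distinct ordinal predecessors, i.e.\ $j(\kappa)>\alpha$. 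As this embedding exists in a set-forcing extension, $\kappa$ is virtually extendible. I expect the main obstacle to be the simultaneous design of $T$ so that $j(\kappa)>\alpha$ is forced while ${<}\kappa$-satisfiability survives---the ``$\ne$''-gadget being the device that reconciles these---together with verifying that the collapse-and-thread construction, justified by the absoluteness philosophy behind Lemma~\ref{lem:absolutenessLemma}, really does produce a total model of $T$ from the merely local data of the pseudo-model.
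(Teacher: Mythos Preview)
Your overall architecture matches the paper's, but two of your steps have genuine gaps.

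\textbf{In $(1)\Rightarrow(2)$:} You assert that $j\image T\subseteq j(T)$ has size ${<}j(\kappa)$ ``and hence a model $\cN\in V_\beta$''. The problem is that $V_\beta$ only knows that its \emph{own} ${<}j(\kappa)$-sized subsets of $j(T)$ have models, and $j\image T$ need not lie in $V_\beta$ (or even in $V$) when $|T|>\kappa$. The paper fixes this with a covering argument: since the collapse forcing has size $|V_\alpha|<j(\kappa)$, one covers $j\image T$ by a set $Y\in V$ of size ${<}j(\kappa)$, and then $j(F)(Y)$ is the desired model. Relatedly, you claim Observation~\ref{obs:kunen} shows ``this collection of restrictions is itself a forth system lying in $V$''. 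The Observation only shows each individual $j\restriction\sigma$ is in $V$; it does \emph{not} show the family $\{j\restriction\sigma:\sigma\in\cP_{\kappa^+}\tau\}$ is in $V$, since assembling it requires the map $\sigma\mapsto j\restriction\sigma$, which is defined from $j\notin V$. The paper's solution is to define $\cF$ via the forcing relation: $f\in\cF$ iff some condition forces $\check f=\dot j\restriction\check\sigma$; this makes $\cF$ definable in $V$ and gives the extension property by strengthening conditions.

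\textbf{In $(4)\Rightarrow(1)$:} Your $\neq$-gadget $\{d_i<c_\kappa\}\cup\{d_i\neq d_{i'}\}$ only yields $|\alpha+1|$-many \emph{distinct} ordinals below $j(\kappa)$, so $j(\kappa)\geq|\alpha+1|$; for non-cardinal $\alpha$ this is far weaker than $j(\kappa)>\alpha$. The paper instead uses $\{d_\xi<d_\eta<c_\kappa:\xi<\eta\leq\alpha\}$, so that $\xi\mapsto d_\xi^{\cN'}$ is order-preserving into $j(\kappa)$, giving $j(\kappa)>d_\alpha^{\cN'}\geq\alpha$ directly. This is an easy fix, but your inference as written is incorrect.

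Your approach to extracting the total renaming in $(4)\Rightarrow(1)$---collapse $\tau$ to be countable and then thread through $\cF$---is a legitimate alternative to the paper's method of forcing directly with $\cF$ as a poset; both produce the virtual bijection.
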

\begin{proof} Assume that $\kappa$ is an $\omega$-pseudo-compactness cardinal for $\bL^2\cup \bL_{\kappa,\omega}$ and fix $\alpha > \kappa$. Let $\tau$ be the language consisting of a binary relation $\in$ and constants $\{c_x\mid x\in V_\alpha\}\cup \{d_\xi\mid \xi\leq\alpha\}\cup\{c\}$. Let $T$ be the following $\bL^2\cup\bL_{\kappa, \omega}(\tau)$-theory:
$${\rm ED}_{\bL_{\kappa, \omega}}(V_\alpha, \in, c_x)_{x\in V_\alpha} \cup \{c_\xi \neq c < c_\kappa \mid \xi < \kappa\} \cup \{\Phi\} \cup \{d_\xi < d_\eta < c_\kappa \mid \xi < \eta \leq \alpha\}, $$ where ${\rm ED}$ stands for elementary diagram, each constant $c_x$ is interpreted as $x$ in $V_\alpha$, and $\Phi$ is Magidor's $\bL^2(\{\in\})$-sentence encoding
the well-foundedness of $\in$ and that the model is isomorphic to some $V_\beta$. Clearly $V_\alpha$ satisfies every piece of $T$ of size less than $\kappa$.

By assumption, there is some $\tau^*$-structure $\cM$ and a forth system $\cF$ witnessing that $\cM$ is a pseudo-model for $T$.  We can fix a way of renaming $\in$ and look at the forth system extending this renaming, so that without loss, we assume that $\tau$ and $\tau^*$ use the same symbol for $\in$.  In particular, the model $\cM$ satisfies $\Phi$, which means it is well-founded and the Mostowski collapse gives $\pi:\cM\cong V_\beta$ for some $\beta$.  So  $V_\beta$ is a pseudo-model for $T$.  The witnessing forth system under the inclusion ordering is a poset $\mathbb P$.  Forcing with $\mathbb P$ yields a bijection\footnote{Although a bijection between languages coming from outside the universe $V$ may not satisfy the properties of a renaming with respect to a particular logic from $V$, the bijection $f$ does because it is a union of renamings from $V$.} $f:\tau \to \tau^*$ in the forcing extension that is a union of the generically chosen collection of renamings from $\cF$. The bijection $f$ allows us to build a virtual $\bL_{\kappa,\omega}$-elementary embedding $j:V_\alpha\to V_\beta$.  We also need to verify that $\crit j = \kappa$ and $j(\kappa) > \alpha$. Since every ordinal $\xi<\kappa$ is definable in the logic $\bL_{\kappa,\omega}$ in any transitive model of set theory of which it is an element (argue by induction on $\xi<\kappa$), the critical point of $j$ must be $\kappa$. The inclusion of the $d_\xi$ constants forces there to be ordinals of order-type $\alpha$ below $j(\kappa)$, so $j(\kappa) > \alpha$. Thus, $(4)\rightarrow (1)$.

Now suppose that $\kappa$ is virtually extendible.  Given a ${<}\kappa$-satisfiable $\bL^2_{\kappa,\kappa}(\tau)$-theory $T$, let $F:\cP_\kappa T \to Str\, \tau$ be a map such that $F(s)\vDash s$.  Let $\alpha$ be large enough that $V_\alpha$ contains $F$ and witnesses this property.  Using virtual extendibility, in a forcing extension $V[G]$ by ${\rm Coll}(\omega,V_\alpha)$, there an elementary embedding $j:V_\alpha \to V_\beta$  with $\crit j=\kappa$ and $j(\kappa) > \alpha$. Since $j(\kappa)$ is inaccessible by elementarity, it follows that $|V_\alpha|<j(\kappa)$. Since the forcing ${\rm Coll}(\omega,V_\alpha)$ has size $|V_\alpha|$, we can cover $j\image T$ by a set $Y$ in $V$ of size $<j(\kappa)$. We will argue that the $\tau^*=j(\tau)$-structure $\cM=j(F)(Y)$ is a $\kappa^+$-pseudo-model of $T$.

What we would like to say is that $\cF$ is the collection of all $f\subseteq j\rest \tau$ of size $\kappa$, but we cannot do this because we do not have access to $j$ in $V$. However, we can do something relatively close using the forcing relation. Let $\dot j$ be a ${\rm Coll}(\omega,V_\alpha)$-name that is forced to be a virtual extendibility embedding from $V_\alpha$ to $V_\beta$. Define $\cF$ to be the collection of all renamings $f:\sigma\to\sigma^*$ with $\sigma\in\cP_{\kappa^+} \tau$ and $\sigma^*\in\cP_{\kappa^+} \tau^*$ such that there is a condition $p\in{\rm Coll}(\omega,V_\alpha)$ with $$p\Vdash\check f=\dot j\rest \check\sigma:\check\sigma\to \dot j\image\check\sigma.$$ The system $\cF$ is non-empty by Observation~\ref{obs:kunen}. It has the extension property because whenever a condition $p\Vdash\check f=\dot j\rest \check\sigma:\check\sigma\to j\image\check\sigma$ and there is some $\tau_0\in \cP_{\kappa^+}\tau$, then there is a condition $q\leq p$ deciding that some $g$ is a renaming between $\sigma\cup \tau_0$ and $j\image (\sigma\cup\tau_0)$, and since $q\leq p$, $g$ must extend $f$. Thus, $(1)\rightarrow (2)$.

The rest of the implications are trivial.
\end{proof}
The above proof would not go through with just a $\kappa^+$-pseudo-chain compactness cardinal $\kappa$ because we cannot filtrate the part of the theory $T$ which involves the constants $d_\xi$. We will show below that the $\kappa^+$-chain compactness cardinals $\kappa$ for $\bL^2_{\kappa,\kappa}$ are precisely the weakly virtually extendible cardinals.

\begin{cor}
Every virtually extendible cardinal $\kappa$ is a weak compactness cardinal for $\bL^2_{\kappa,\kappa}$.
\end{cor}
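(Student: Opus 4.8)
The plan is to read this off Theorem~\ref{ve-thm} with essentially no new work, the only idea being a size count: a theory of size $\kappa$ occupies a language small enough that a single renaming from a $\kappa^+$-forth system can have the \emph{entire} language as its domain, which collapses a pseudo-model down to an honest model.

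So let $T\subseteq \bL^2_{\kappa,\kappa}(\tau)$ be a ${<}\kappa$-satisfiable theory with $|T|=\kappa$; the goal is to exhibit an actual model. First I would trim the language. Since $o(\bL^2_{\kappa,\kappa})=\kappa$, every sentence of $T$ mentions fewer than $\kappa$ symbols of $\tau$, and as $|T|=\kappa$ at most $\kappa$ symbols occur in $T$ altogether; discarding the rest, I may assume $|\tau|\le\kappa$, equivalently $\tau\in\cP_{\kappa^+}\tau$. Next, because $\kappa$ is virtually extendible, Theorem~\ref{ve-thm} gives that $\kappa$ is a $\kappa^+$-pseudo-compactness cardinal for $\bL^2_{\kappa,\kappa}$. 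As $T$ is ${<}\kappa$-satisfiable, it thus has a $\kappa^+$-pseudo-model: a $\tau^*$-structure $\cM$ and a $\kappa^+$-forth system $\cF$ from $\tau$ to $\tau^*$ with $\cM\vDash f_*\image\bigl(T\cap\bL^2_{\kappa,\kappa}(\dom f)\bigr)$ for all $f\in\cF$.

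The crucial step is to produce one renaming in $\cF$ whose domain is all of $\tau$. Applying the extension property (b) of the forth system to $\emptyset\in\cF$ with $\tau_0=\tau$ --- legitimate precisely because $\tau\in\cP_{\kappa^+}\tau$ --- yields some $g\in\cF$ with $\tau\subseteq\dom g$; since $\dom g\in\cP_{\kappa^+}\tau$ we must have $\dom g=\tau$, so $g\colon\tau\to\sigma^*$ is a renaming of the whole language and $T\cap\bL^2_{\kappa,\kappa}(\dom g)=T$. Hence $\cM\vDash g_*\image T$. Finally I would undo the renaming: by the expansion property the reduct $\cM\rest\sigma^*$ still models $g_*\image T$, and by the renaming clause of the logic the $\tau$-structure $M=(g^*)^{-1}(\cM\rest\sigma^*)$ satisfies $M\vDash\phi$ iff $\cM\rest\sigma^*\vDash g_*(\phi)$ for each $\phi\in T$, so $M\vDash T$ and $T$ is satisfiable.

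The main obstacle is conceptual rather than technical: one has to notice that $\kappa^+$-pseudo-compactness (clause (2) of Theorem~\ref{ve-thm}), and not the weaker $\omega$-pseudo-compactness, is what is needed, because finite renamings could never have the size-${\le}\kappa$ language $\tau$ as a domain, so the extension property would fail to deliver a full renaming. Granting the correct clause, the argument is just the size count followed by translation back across a single renaming.
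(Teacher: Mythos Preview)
Your argument is correct and is exactly the reasoning the paper has in mind: the corollary is stated without proof immediately after Theorem~\ref{ve-thm}, and the paper has already remarked (just after the definition of pseudo-compactness cardinals) that a $\kappa^+$-pseudo-chain compactness cardinal is automatically a weak compactness cardinal---precisely because a theory of size $\kappa$ lives in a language of size at most $\kappa$, so a single element of a $\kappa^+$-forth system can swallow the whole language and turn the pseudo-model into a genuine model after renaming. Your write-up simply unpacks this one-line observation, and your closing remark that clause~(2) rather than clause~(3) or~(4) of Theorem~\ref{ve-thm} is the relevant one is exactly the point.
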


The converse fails to hold. Let us observe here that much weaker large cardinals $\kappa$ can be weak compactness cardinals for $\bL^2_{\kappa,\kappa}$. Hamkins and Johnstone defined that an inaccessible cardinal $\kappa$ is \emph{strongly uplifting} if for every $A\subseteq\kappa$, there are arbitrarily large regular $\theta>\kappa$ such that \hbox{$(V_\kappa,\in,A)\prec (V_\theta,\in,\bar A)$} for some $\bar A\subseteq V_\theta$ \cite{HamkinsJohnstone2017:StronglyUpliftingCardinalsAndBoldfaceResurrection}. Strongly uplifting cardinals are weaker than subtle cardinals in strength, and hence in, particular, much weaker than virtually extendible cardinals. These cardinals are also weak compactness cardinals for $\bL^2_{\kappa,\kappa}$, but are still not the optimal bound.
\begin{prop}
Every strongly uplifting cardinal $\kappa$ is a weak compactness cardinal for $\bL^2_{\kappa,\kappa}$ and it has below it a cardinal $\delta$ that is a weak compactness cardinal for $\bL^2_{\delta,\delta}$.
\end{prop}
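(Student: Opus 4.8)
The plan is to prove both clauses by reflection, using that a strongly uplifting cardinal $\kappa$ is in particular inaccessible and \emph{uplifting}, and that every uplifting cardinal satisfies $V_\kappa \prec_{\Sigma_2} V$ (a witness for a $\Sigma_2$-truth is found in some $V_\theta$ with $V_\kappa \prec V_\theta$, using $\Pi_1$-absoluteness). For the first clause, fix a ${<}\kappa$-satisfiable $\bL^2_{\kappa,\kappa}$-theory $T$ of size $\kappa$ in a language $\tau$; without loss $|\tau|\le\kappa$, and since $\kappa$ is inaccessible I code $\tau$ and $T$ as a single predicate $A\subseteq V_\kappa$. Because $\kappa$ is regular, every $\bL^2_{\kappa,\kappa}(\tau)$-formula is built from ${<}\kappa$-many subformulas over symbols coded in $V_\kappa$, so it has rank $<\kappa$ and lies in $V_\kappa$; moreover, for $M,\varphi\in V_\kappa$ the relation $M\models_{\bL^2}\varphi$ is first-order definable over $(V_\kappa,\in)$, since the second-order quantifiers range over $\cP(M)\subseteq V_\kappa$ and the ${<}\kappa$-ary conjunctions are evaluated internally. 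The key first step is that, since ``$s$ has a model'' is a $\Sigma_2$-assertion about $s$, every satisfiable $s\in V_\kappa$ has a model inside $V_\kappa$ by $\Sigma_2$-reflection; hence, as $T$ is ${<}\kappa$-satisfiable, the first-order sentence $\psi$ over $(V_\kappa,\in,A)$ asserting ``every set-sized subcollection of the $A$-coded theory has a model'' is \emph{true}.

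I would then invoke strong uplifting to obtain a regular $\theta>\kappa$ and $\bar A\subseteq V_\theta$ with $(V_\kappa,\in,A)\prec(V_\theta,\in,\bar A)$. Elementarity of the predicate gives $A=\bar A\cap V_\kappa$, so the theory $\bar T$ coded by $\bar A$ satisfies $\bar T\cap V_\kappa=T$; since all formulas of $T$ have rank $<\kappa$ this yields $T\subseteq\bar T$, and $T\in V_{\kappa+1}\subseteq V_\theta$. Transferring $\psi$ and applying it to $s=T$ produces $M\in V_\theta$ with $M\models_{\bL^2}T$ in the sense of $V_\theta$. As $M\in V_\theta$ forces $\cP(M)\subseteq V_\theta$ and the formulas of $T$ lie in $V_\kappa$, the model $V_\theta$ computes $\bL^2$-satisfaction correctly, so $M\models_{\bL^2}T$ holds in $V$, and $\kappa$ is a weak compactness cardinal for $\bL^2_{\kappa,\kappa}$. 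Note that \emph{strong}, as opposed to plain, uplifting is exactly what lets the size-$\kappa$ theory $T\notin V_\kappa$ be carried upward by the predicate.

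For the second clause, write $\varphi(\mu)$ for ``$\mu$ is inaccessible and a weak compactness cardinal for $\bL^2_{\mu,\mu}$,'' a first-order property over $(V_\alpha,\in)$ whenever $\alpha>\mu+1$. For $\mu<\kappa$ this is computed correctly by $V_\kappa$, since the relevant theories have size $\mu$ and lie in $V_\kappa$, while ``satisfiable'' and ``${<}\mu$-satisfiable'' are $\Sigma_2$ with parameters in $V_\kappa$ and hence absolute by $V_\kappa\prec_{\Sigma_2}V$. So it suffices to produce $\delta<\kappa$ with $V_\kappa\models\varphi(\delta)$, and for that I would show $V_\kappa\models\exists\mu\,\varphi(\mu)$ by reflecting the true instance $\varphi(\kappa)$ through an uplifting target. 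The delicate point is making a single target recognize $\varphi(\kappa)$: for each of the (set-many) theories $T\subseteq V_\kappa$ of size $\kappa$ that are really ${<}\kappa$-satisfiable, the argument above yields a model $M_T$; let $\theta^*$ be the supremum of their ranks, and choose a regular $\theta>\theta^*$ with $V_\kappa\prec V_\theta$. Any theory that $V_\theta$ deems ${<}\kappa$-satisfiable of size $\kappa$ is genuinely so (its witnessing models in $V_\theta$ are real), hence has $M_T$ of rank $\le\theta^*<\theta$ present in $V_\theta$, so $V_\theta\models\varphi(\kappa)$. Then $V_\theta\models\exists\mu\,\varphi(\mu)$, and $V_\kappa\prec V_\theta$ reflects this to $V_\kappa$, yielding $\delta<\kappa$ with $V_\kappa\models\varphi(\delta)$, i.e.\ a genuine weak compactness cardinal $\delta$ for $\bL^2_{\delta,\delta}$ below $\kappa$.

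I expect the main obstacle to be the $\Sigma_2$ bookkeeping in the second clause: satisfiability of a second-order theory is only $\Sigma_2$, not absolute, and by the failure of downward L\"owenheim--Skolem for $\bL^2$ a satisfiable theory need not have a small model, so I cannot simply demand models inside $V_\kappa$ for the size-$\kappa$ theories. The device that makes the reflection go through is to bound \emph{in advance} the ranks of all witnessing models by the set-sized supremum $\theta^*$ and only then pick the uplifting target $\theta$ above it, so that $V_\theta$ verifies the satisfiability of every relevant theory without my needing $V_\theta\prec_{\Sigma_2}V$. The remaining care is in the routine absoluteness and definability claims: the rank bound on formulas, the first-order definability of $\bL^2$-satisfaction over $V_\kappa$, and the derivation of $V_\kappa\prec_{\Sigma_2}V$ from uplifting.
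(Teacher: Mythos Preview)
Your proof is correct and follows essentially the same reflection strategy as the paper: code the theory as a predicate on $V_\kappa$, pass to an uplifting target $(V_\theta,\in,\bar A)$, and use elementarity to find a model of $T$ as a small fragment of $\bar T$; then reflect the existence of a weak compactness cardinal back down through $V_\kappa\prec V_\theta$.

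There are two minor differences worth noting. In the first clause, you invoke $V_\kappa\prec_{\Sigma_2}V$ (derived from uplifting) to place the models of the ${<}\kappa$-sized fragments directly inside $V_\kappa$, whereas the paper instead chooses $\theta$ large enough that $V_\theta$ already sees these models and then reflects down and back up; both routes give the same first-order sentence holding in $(V_\kappa,\in,T)$. In the second clause, you are actually more careful than the paper: the paper writes ``choose any $\theta$ such that $V_\kappa\prec V_\theta$'' and asserts $V_\theta$ sees $\kappa$ as a weak compactness cardinal, which as stated is too quick---a model of a given size-$\kappa$ theory need not lie in an arbitrary such $V_\theta$. Your device of first bounding the ranks of witnessing models by a set-sized supremum $\theta^*$ and only then selecting an uplifting target $\theta>\theta^*$ is exactly the missing justification, and is the right way to make that step precise.
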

\begin{proof}
First, suppose that $\kappa$ is strongly uplifting.  Given a ${<}\kappa$-satisfiable theory $T$ in $\bL^2_{\kappa,\kappa}(\tau)$ of size $\kappa$, we can assume without loss that $T\subseteq V_\kappa$. Choose a large enough $\theta$ and find $\bar T \subseteq V_\theta$ such that
$$(V_\kappa,\in,T)\prec (V_\theta,\in,\bar T)$$
and $V_\theta$ sees that every proper initial segment of $T$ has a model. By elementarity, $(V_\kappa,\in,T)$ must then also satisfy that every proper initial segment of $T$ has a model, but then again by elementarity, we get that $(V_\theta,\in,\bar T)$ satisfies that every proper initial segment of $\bar T$, in particular $T$,  has a model.

Now choose any $\theta$ such that $V_\kappa\prec V_\theta$. The model $V_\theta$ satisfies that $\kappa$ is a weak compactness cardinal for $L^2_{\kappa,\kappa}$. Hence, by elementarity, $V_\kappa$ satisfies that there is a weak compactness cardinal $\delta$ for $L^2_{\delta,\delta}$. Let's argue that $V_\kappa$ is correct about $\delta$. Fix a ${<}\delta$-satisfiable theory $T$ of size $\delta$. We can assume without loss of generality that $T\in V_\kappa$. Choose a large enough $V_\rho$ which sees that $T$ is ${<}\delta$-satisfiable. By elementarity, $V_\kappa$ then also satisfies that $T$ is ${<}\delta$-satisfiable. Hence $V_\kappa$ has a model of $T$.
\end{proof}

Magidor showed that the least cardinal $\kappa$ which is a strong compactness cardinal for $\bL^2$ must be extendible \cite{m-roleof}. We get an interesting not quite analogous reformulation with virtual extendibility and pseudo-models.
\begin{theorem}\label{th:leastCompactness} $\,$
\begin{enumerate}
\item Suppose $\kappa$ is the least $\kappa^+$-pseudo-compactness cardinal for $\bL^2$. Then either $\kappa$ is virtually extendible or there is a measurable cardinal below it.
\item Suppose the ${\rm GCH}$ holds and there are no measurable cardinals. Then $\kappa$ is virtually extendible if and only if it is a $\kappa^+$-pseudo-compactness cardinal for $\bL^2$.
\end{enumerate}
\end{theorem}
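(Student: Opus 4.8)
The plan is to recycle the machinery of Theorem~\ref{ve-thm}, noting that the only genuinely new difficulty is the loss of the infinitary apparatus of $\bL_{\kappa,\omega}$ that was used there to pin the critical point to $\kappa$. First, the forward direction of part (2) is essentially free: since $\bL^2\subseteq \bL^2_{\kappa,\kappa}$ and pseudo-compactness is monotone in the logic (a ${<}\kappa$-satisfiable $\bL^2$-theory is a ${<}\kappa$-satisfiable $\bL^2_{\kappa,\kappa}$-theory, so inherits a pseudo-model), Theorem~\ref{ve-thm} gives that a virtually extendible $\kappa$ is a $\kappa^+$-pseudo-compactness cardinal for $\bL^2_{\kappa,\kappa}$, hence for $\bL^2$. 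This uses neither ${\rm GCH}$ nor the absence of measurables.

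For the substantive directions, fix $\alpha>\kappa$ and run the construction of the implication $(4)\rightarrow(1)$ of Theorem~\ref{ve-thm} almost verbatim, but with the theory $T$ now taken in $\bL^2(\tau)$ rather than $\bL^2\cup\bL_{\kappa,\omega}(\tau)$: include the first-order elementary diagram ${\rm ED}(V_\alpha,\in,c_x)_{x\in V_\alpha}$, Magidor's $\Phi$, the constants $\{c_\xi\neq c<c_\kappa\mid \xi<\kappa\}$ forcing a critical point ${\le}\kappa$, and the constants $\{d_\xi<d_\eta<c_\kappa\mid \xi<\eta\le\alpha\}$ forcing $j(\kappa)>\alpha$. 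As before this theory is ${<}\kappa$-satisfiable in $V_\alpha$, so a $\kappa^+$-pseudo-model exists; it satisfies $\Phi$, collapses to some $V_\beta$, and forcing with the forth system $\cF$ produces a virtual elementary embedding $j\colon V_\alpha\to V_\beta$ with $\crit j\le\kappa$ and $j(\kappa)>\alpha$.

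The crux is that $\bL^2$, unlike $\bL_{\kappa,\omega}$, cannot define each individual ordinal below $\kappa$, so a priori nothing forbids $\crit j=\mu<\kappa$; controlling this is the entire content of the theorem and is where I expect the main difficulty. For part (1) I would argue by a dichotomy. If for every $\alpha$ the embedding can be arranged with $\crit j=\kappa$, then $\kappa$ is virtually extendible. Otherwise there is $\alpha$ for which every such $j$ has $\crit j=\mu<\kappa$. The key feature to exploit is that this is not an arbitrary virtual embedding: it has a critical point $\mu<\kappa$ \emph{together with} $j(\kappa)>\alpha$ for arbitrarily large $\alpha$, a degree of strength well beyond what remarkable-type virtual embeddings supply, and it is $\bL^2$-elementary interpreted \emph{de re} (with the genuine subsets of $V_\alpha$ coming from $V$). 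I would use these features, together with the minimality of $\kappa$ to block the reflected configuration from itself being a smaller pseudo-compactness cardinal, to extract an honest $\mu$-complete ultrafilter in $V$ and thereby witness a measurable cardinal below $\kappa$. Converting the virtual embedding into a genuine measure is the technical heart, and it is precisely here that the hypotheses of part (2) enter.

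For part (2), the absence of measurable cardinals makes the ``$\crit j=\mu<\kappa$'' branch untenable once one can genuinely realize the measure, and the role of ${\rm GCH}$ is to supply the cardinal-arithmetic bookkeeping and definability (for instance $2^\mu=\mu^+$, and a definable well-ordering with which to pin down subsets of $\mu$) that make the Observation~\ref{obs:kunen}-style recovery of the relevant part of $j$ inside $V$ go through. With the drop excluded, every extracted $j$ has $\crit j=\kappa$, and since $j(\kappa)>\alpha$ holds for every $\alpha$, $\kappa$ is virtually extendible; together with the forward direction this gives the stated equivalence. Two routine points to verify are that the ${<}\kappa$-satisfiability of the $d_\xi$-part survives in the purely second-order setting (immediate, since a ${<}\kappa$-sized subtheory mentions ${<}\kappa$ of the $d_\xi$, whose indices have order type ${<}\kappa$ and hence embed below $\kappa$) and that the forth-system poset is literally the one from Theorem~\ref{ve-thm}, so that the embedding it forces is genuinely $\bL^2$-elementary.
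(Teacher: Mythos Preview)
Your framework is right---run the $\bL^2$-only version of the Theorem~\ref{ve-thm} theory and analyze the resulting critical point $\gamma\le\kappa$---but the crucial step you flag as ``the technical heart'' is left as a black box, and your description of how minimality and ${\rm GCH}$ enter is not quite how the argument goes. The paper's dichotomy is not on whether $\crit j$ can be arranged to equal $\kappa$, but on whether $2^\gamma\le\kappa$. This is exactly where $\kappa^+$-pseudo-compactness (as opposed to mere $\omega$-pseudo-compactness) is used: a $\kappa^+$-forth system contains renamings with domains of size $\kappa$, so a single $f\in\cF$ covers all constants $\{c_X:X\in\cP(\gamma)\}\cup\{c_\gamma\}$ once $2^\gamma\le\kappa$, and from $f$ together with the interpretations in the pseudo-model one reads off $j\restriction\cP(\gamma)$ \emph{in $V$}. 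The derived ultrafilter $\{X\subseteq\gamma:\gamma\in j(X)\}$ is then genuinely in $V$, so $\gamma$ is measurable. Observation~\ref{obs:kunen} is not the right tool here, since it needs a bijection from the critical point onto the set in question, and $|\cP(\gamma)|>\gamma$.

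For the other branch $2^\gamma>\kappa$, you are missing a second idea: choose $\alpha$ to be a strong limit of countable cofinality. Then elementarity gives $2^{j(\gamma)}>j(\kappa)>\alpha$, and since $\alpha$ is a strong limit (and $j(\gamma)$ is inaccessible, hence $\neq\alpha$) this forces $j(\gamma)>\alpha$. So in this branch $\gamma$ itself acquires a virtual extendibility embedding for $\alpha$. Now the pigeonhole and minimality come in \emph{here}, not in the measurable branch: if no measurable appears, some fixed $\gamma\le\kappa$ works for unboundedly many $\alpha$, hence is virtually extendible, hence is a $\gamma^+$-pseudo-compactness cardinal for $\bL^2$, so $\gamma=\kappa$ by leastness. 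For part~(2), ${\rm GCH}$ simply guarantees $2^\gamma=\gamma^+\le\kappa$ whenever $\gamma<\kappa$, placing us always in the measurable-producing branch; the absence of measurables then forces $\gamma=\kappa$ directly, with no appeal to minimality or pigeonhole.
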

\begin{proof}
We will only prove (1) because (2) follows from the argument. Suppose $\kappa$ is the least $\kappa^+$-pseudo-compactness cardinal for $\bL^2$ and fix a strong limit cardinal $\alpha>\kappa$ of countable cofinality. Let the language $\tau$ and theory $T$ be as in the proof of Theorem~\ref{ve-thm}, with the only difference that we take the elementary diagram of $(V_\alpha,\in,c_x)_{x\in V_\alpha}$ in $\bL^2$ as opposed to $\bL^2_{\kappa,\omega}$. By the compactness assumption, there is $\beta>\kappa$ such that $V_\beta$ is a $\kappa^+$-pseudo-model of $T$. So there is a virtual elementary embedding $j:V_\alpha\to V_\beta$ with $\crit j=\gamma\leq\kappa$ and $j(\kappa)>\alpha$ whose $\kappa$-sized pieces are in $V$. First, suppose that $2^\gamma\leq\kappa$. In this case, because we have $\kappa$-sized pieces of $j$, it follows that $\gamma$ is measurable. So suppose that $2^\gamma>\kappa$. It follows, by elementarity, that $2^{j(\gamma)}>j(\kappa)$. But since we assumed that $\alpha$ is a strong limit and $j(\kappa)>\alpha$, it must be that $j(\gamma)>\alpha$ as well (note that $j(\gamma)$ is inaccessible and therefore cannot be $\alpha$).

Assuming that there are no measurable cardinals below $\kappa$, by the pigeon-hole principle, there must be some critical point $\gamma\leq\kappa$, which works for unboundedly many ordinals $\alpha$. So $\gamma$ is virtually extendible, which means that it cannot be below $\kappa$ by the leastness assumption.
\end{proof}
We do not know whether an analogous result holds $\omega$-pseudo-compactness cardinals for $\bL^2$. More generally, we do not have any results separating $\omega$-pseudo-compactness cardinals and $\kappa^+$-pseudo compactness cardinals $\kappa$.
\begin{question}
For some logic $\cL$ and language $\tau$, is there an uncountable theory $T$ in $\cL(\tau)$ which has a pseudo-model, but not an $\omega_1$-pseudo-model?
\end{question}
\begin{question}
Is there a logic $\cL$ and a cardinal $\kappa$ which is an $\omega$-pseudo-compactness cardinal for $\cL$, but not a $\kappa^+$-pseudo-compactness cardinal for $\cL$?
\end{question}
Next, we show that weakly virtually extendible cardinals $\kappa$ are precisely the $\kappa^+$-pseudo-chain compactness cardinals for $\bL^2_{\kappa,\kappa}$.
\begin{theorem}\label{wve-thm}
The following are equivalent for a cardinal $\kappa$.
\begin{enumerate}
\item $\kappa$ is weakly virtually extendible.
\item $\kappa$ is a $\kappa^+$-chain-compactness cardinal for $\bL^2_{\kappa,\kappa}$.
\item $\kappa$ is an $\omega$-chain-compactness cardinal for $\bL^2_{\kappa,\kappa}$.
\item $\kappa$ is an $\omega$-chain-compactness cardinal for $\bL^2\cup \bL_{\kappa,\omega}$.
\end{enumerate}
\end{theorem}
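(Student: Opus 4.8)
The plan is to follow the proof of Theorem~\ref{ve-thm} very closely, establishing the two substantive implications $(4)\Rightarrow(1)$ and $(1)\Rightarrow(2)$ and leaving the rest as routine. Indeed, $(2)\Rightarrow(3)$ because a $\kappa^+$-pseudo-model is automatically an $\omega$-pseudo-model: given a witnessing $\kappa^+$-forth system, the collection of all finite restrictions of its renamings is an $\omega$-forth system for the same structure, where the $\omega$-extension property is inherited from the $\kappa^+$-extension property (any finite $\tau_0$ lies in $\cP_{\kappa^+}\tau$), and the pseudo-modeling condition descends to subdomains. And $(3)\Rightarrow(4)$ because $\bL^2\cup\bL_{\kappa,\omega}\subseteq\bL^2_{\kappa,\kappa}$, so chain-compactness for the larger logic yields it for the smaller. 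The single conceptual difference from Theorem~\ref{ve-thm} is that the constants $\{d_\xi\mid\xi\leq\alpha\}$ used there cannot be kept: the block $\{d_\xi<d_\eta<c_\kappa\mid\xi<\eta\leq\alpha\}$ cannot be filtered as an increasing $\kappa$-union of satisfiable theories, and deleting it is exactly what weakens the conclusion from virtual extendibility to \emph{weak} virtual extendibility.

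For $(4)\Rightarrow(1)$, I would fix $\alpha>\kappa$ and take the language $\tau$ and theory
$$T={\rm ED}_{\bL_{\kappa,\omega}}(V_\alpha,\in,c_x)_{x\in V_\alpha}\cup\{c_\xi\neq c<c_\kappa\mid\xi<\kappa\}\cup\{\Phi\},$$
which is the theory of Theorem~\ref{ve-thm} with the $d_\xi$ constants removed. This $T$ is the increasing union $T=\bigcup_{\eta<\kappa}T_\eta$, where $T_\eta$ keeps the fixed pieces together with $\{c_\xi\neq c\mid\xi<\eta\}$, and each $T_\eta$ is satisfied by $V_\alpha$ interpreting $c$ as $\eta$. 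By $(4)$, $T$ has an $\omega$-pseudo-model; after identifying the symbol for $\in$, using $\Phi$ to obtain well-foundedness, and Mostowski collapsing, I take it to be some $V_\beta$ with witnessing forth system $\cF$. Forcing with $\cF$ gives a bijection $f:\tau\to\tau^*$ that is a union of renamings from $\cF$, hence a virtual $\bL_{\kappa,\omega}$-elementary embedding $j:V_\alpha\to V_\beta$. Since every ordinal below $\kappa$ is $\bL_{\kappa,\omega}$-definable, $j$ fixes $\kappa$ pointwise, while the interpretation of $c$ is an ordinal below $j(\kappa)$ distinct from every $j(\xi)=\xi$ with $\xi<\kappa$, forcing $j(\kappa)>\kappa$; thus $\crit j=\kappa$. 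As $\alpha$ was arbitrary and there are no $d_\xi$ to force $j(\kappa)>\alpha$, this is precisely weak virtual extendibility.

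For $(1)\Rightarrow(2)$, the covering argument of Theorem~\ref{ve-thm} is unavailable, since weak virtual extendibility does not give $j(\kappa)>\alpha$ and so does not bound $|V_\alpha|$ below $j(\kappa)$. I would instead use the chain directly. Given $T=\bigcup_{\eta<\kappa}T_\eta$ with each $T_\eta$ satisfiable, fix a sequence $\langle N_\eta\mid\eta<\kappa\rangle$ with $N_\eta\vDash T_\eta$, choose $\alpha$ large enough to contain this sequence together with $\tau$, $T$, and $\bL^2_{\kappa,\kappa}(\tau)$, and apply weak virtual extendibility in a ${\rm Coll}(\omega,V_\alpha)$-extension to get $j:V_\alpha\to V_\beta$ with $\crit j=\kappa$. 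Applying $j$ yields $\langle N^*_\eta\mid\eta<j(\kappa)\rangle$ modeling $\langle T^*_\eta\mid\eta<j(\kappa)\rangle$; since $\crit j=\kappa$ gives $\kappa<j(\kappa)$, the term $N^*_\kappa$ exists, and because $j$ fixes the ordinals below $\kappa$ and the image sequence is increasing, $j\image T\subseteq T^*_\kappa$, so $\cM:=N^*_\kappa$ models $j\image T$. This satisfaction is computed correctly in $V$ because a renaming of a $\bL^2_{\kappa,\kappa}$-sentence is again one and $\bL^2_{\kappa,\kappa}$-satisfaction for such a sentence is absolute between $V_\beta$ and $V$. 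Exactly as in Theorem~\ref{ve-thm}, I then define the forth system $\cF$ to consist of all renamings $f:\sigma\to\sigma^*$ with $\sigma\in\cP_{\kappa^+}\tau$ for which some condition forces $\check f=\dot j\rest\check\sigma$, where $\dot j$ names the embedding; Observation~\ref{obs:kunen} makes $\cF$ nonempty, and deciding longer restrictions below a condition gives the extension property. Since $f_*$ agrees with $j$ on sentences (as in the argument of Theorem~\ref{thm:woodin-char}), $\cM$ models $f_*\image(T\cap\bL^2_{\kappa,\kappa}(\dom f))\subseteq j\image T$ for every $f\in\cF$, so $\cM$ is a $\kappa^+$-pseudo-model of $T$.

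I expect the main obstacle to be precisely this $(1)\Rightarrow(2)$ direction, which contains the one genuinely new idea beyond Theorem~\ref{ve-thm}: without $j(\kappa)>\alpha$ one loses the mechanism that previously produced the pseudo-model, and it must be replaced by passing to the $\kappa$-th stage $N^*_\kappa$ of the image chain. The delicate point is verifying that $N^*_\kappa$ genuinely models all of $j\image T$ under honest $\bL^2_{\kappa,\kappa}$-satisfaction rather than the target model's internal $\bL^2_{j(\kappa),j(\kappa)}$-satisfaction, which is where I would spend the most care.
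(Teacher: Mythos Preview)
Your proposal is correct and follows essentially the same approach as the paper: for $(4)\Rightarrow(1)$ both drop the $d_\xi$ constants from the theory of Theorem~\ref{ve-thm} and filtrate over the $\{c_\xi\neq c<c_\kappa\}$ block, and for $(1)\Rightarrow(2)$ both take the model of $j(\vec T)(\kappa)$ inside $V_\beta$ as the desired $\kappa^+$-pseudo-model. You are in fact more explicit than the paper in constructing the forth system via Observation~\ref{obs:kunen} and in checking that $V_\beta$ computes $\bL^2_{\kappa,\kappa}$-satisfaction correctly; the paper simply asserts that $\cN$ is the required pseudo-model without spelling these out.
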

\begin{proof}
Assume that $\kappa$ is an $\omega$-chain-compactness cardinal for $\bL^2\cup \bL_{\kappa,\omega}$ and fix $\alpha > \kappa$. Let $\bar\tau$ be the language from the proof of Theorem~\ref{ve-thm} without the constants $d_\xi$ and let $\bar T$ be the theory $T$ without the statements about the $d_\xi$.
We can filtrate $\bar T$ by including in $\bar T_\eta$ the statements $\{c_\xi \neq c < c_\kappa \mid \xi < \eta\}$. By hypothesis, some $V_\beta$ is a pseudo-model for the theory $\bar T$, and this yields a virtual elementary embedding $j:V_\alpha\to V_\beta$ with $\crit j=\kappa$.

Now suppose that $\kappa$ is weakly virtually extendible and $T=\bigcup_{\eta<\kappa}T_\eta$ is a  $\bL^2_{\kappa, \kappa}(\tau)$-theory such that $\vec T=\langle T_\eta\mid\eta<\kappa\rangle$ is an increasing sequence of satisfiable theories. Let $\alpha$ be large enough so that $V_\alpha$ witnesses all this and let $j:V_\alpha\to V_\beta$ with $\crit j =\kappa$ be a virtual elementary embedding. By elementarity, $V_\beta$ satisfies that $j(\vec T)$ is an increasing sequence of theories and for all $\eta<j(\kappa)$, $j(\vec T)(\eta)$ is satisfiable, so in particular, $V_\beta$ has a model $\cN\vDash j(T)(\kappa)\supseteq T$. The model $\cN$ is the required $\kappa^+$-pseudo-model for $T$.

\end{proof}
It follows, in particular, that weakly virtually extendible cardinals $\kappa$ are also weak compactness cardinals for $\bL^2_{\kappa,\kappa}$.
\begin{theorem}
Suppose $\kappa$ is the least $\omega$-chain-compactness cardinal for $\bL^2$. Then $\kappa$ is weakly virtually extendible.
\end{theorem}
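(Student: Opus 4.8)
The plan is to transcribe the proof of the first part of Theorem~\ref{th:leastCompactness}, but working with chain compactness rather than compactness, so that the measurable/extendible dichotomy disappears and only a leastness argument remains. Fix an ordinal $\alpha>\kappa$. As in the proof of Theorem~\ref{wve-thm}, let $\bar\tau$ be the language with a binary relation $\in$ and constants $\{c_x\mid x\in V_\alpha\}\cup\{c\}$, and let $\bar T$ be the $\bL^2(\bar\tau)$-theory consisting of the $\bL^2$-elementary diagram ${\rm ED}_{\bL^2}(V_\alpha,\in,c_x)_{x\in V_\alpha}$, Magidor's sentence $\Phi$, and the statements $\{c_\xi\neq c<c_\kappa\mid\xi<\kappa\}$. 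The key point is that we now take the diagram in plain $\bL^2$ rather than in a logic containing $\bL_{\kappa,\omega}$, so no infinitary sentences are available to pin the critical point down to exactly $\kappa$; this is precisely why the leastness hypothesis is needed. Filtrate $\bar T$ by putting $\{c_\xi\neq c<c_\kappa\mid\xi<\eta\}$ into $\bar T_\eta$, each of which is satisfied by $V_\alpha$ upon interpreting $c$ as $\eta$.

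By $\omega$-chain-compactness for $\bL^2$, the theory $\bar T$ has an $\omega$-pseudo-model; since it satisfies $\Phi$ it is well-founded, its transitive collapse is some $V_\beta$, and forcing with the witnessing forth system (exactly as in the proofs of Theorems~\ref{ve-thm} and~\ref{wve-thm}) produces a virtual $\bL^2$-elementary embedding $j:V_\alpha\to V_\beta$, in particular a virtual elementary embedding. The statements $\{c_\xi\neq c<c_\kappa\mid\xi<\kappa\}$ force $\crit j\leq\kappa$: if $\crit j>\kappa$, then $j(\kappa)=\kappa$ and the interpretation of $c$ would be an ordinal below $\kappa$ distinct from every $j(\xi)=\xi$, which is impossible. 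Write $\gamma_\alpha=\crit j\leq\kappa$ for the critical point obtained from this particular $\alpha$.

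Since $\alpha\mapsto\gamma_\alpha$ is a class function from the proper class $\{\alpha\mid\alpha>\kappa\}$ into the set $\kappa+1$, some single value $\gamma\leq\kappa$ is realized for cofinally many $\alpha$; fix it. For every $\alpha'>\gamma$, choose $\alpha>\alpha'$ with $\gamma_\alpha=\gamma$ together with a virtual embedding $j:V_\alpha\to V_\beta$ with $\crit j=\gamma$; then $j\restriction V_{\alpha'}:V_{\alpha'}\to V_{j(\alpha')}$ is again a virtual elementary embedding with critical point $\gamma$, since the restriction lives in the same forcing extension and $\gamma<\alpha'$ keeps the critical point visible inside $V_{\alpha'}$. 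This restriction maneuver is the crux: it upgrades ``cofinally many $\alpha$'' to ``all $\alpha'>\gamma$,'' and therefore $\gamma$ is weakly virtually extendible.

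Finally, by Theorem~\ref{wve-thm} the cardinal $\gamma$ is an $\omega$-chain-compactness cardinal for $\bL^2_{\gamma,\gamma}$, and hence for $\bL^2$ as well, because $\bL^2\subseteq\bL^2_{\gamma,\gamma}$, so every $\bL^2$-theory presented as a $\gamma$-chain of satisfiable theories is already such a theory for $\bL^2_{\gamma,\gamma}$ and acquires the same $\omega$-pseudo-model. By the leastness of $\kappa$ we must have $\gamma\geq\kappa$, whence $\gamma=\kappa$, and the restriction argument of the previous paragraph shows exactly that $\kappa$ is weakly virtually extendible. The only genuinely delicate point is the passage from $\crit j\leq\kappa$ to $\crit j=\kappa$, which is forced upon us by the absence of $\bL_{\kappa,\omega}$ and resolved by the pigeonhole-plus-restriction combination together with minimality; everything else is a direct transcription of the proofs of Theorems~\ref{wve-thm} and~\ref{th:leastCompactness}.
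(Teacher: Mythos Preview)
Your proof is correct and follows essentially the same approach as the paper: build the $\bL^2$-diagram theory, extract a virtual embedding with critical point $\gamma\leq\kappa$, use pigeonhole to get one $\gamma$ working cofinally, and invoke leastness. You spell out two steps the paper leaves implicit---the restriction argument upgrading ``cofinally many $\alpha$'' to ``all $\alpha'>\gamma$'', and the appeal to Theorem~\ref{wve-thm} to see that a weakly virtually extendible $\gamma$ is an $\omega$-chain-compactness cardinal for $\bL^2$---but these are exactly the details the paper's terse ``But then $\gamma$ must be weakly virtually extendible and so $\gamma=\kappa$ by the leastness assumption'' is hiding.
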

\begin{proof}
Fix $\alpha>\kappa$. Let the language $\bar\tau$ and theory $\bar T$ be as in the proof of Theorem~\ref{wve-thm}, with the only difference that we take the elementary diagram of $(V_\alpha,\in,c_x)_{x\in V_\alpha}$ in $\bL^2$ as opposed to $\bL^2_{\kappa,\omega}$. By the compactness assumption, there is a virtual elementary embedding $j:V_\alpha\to V_\beta$ with $\crit j=\gamma\leq\kappa$. By the pingeon-hole principle, there is some $\gamma\leq\kappa$ that works for unboundedly many $\alpha$. But then $\gamma$ must be weakly virtually extendible and so $\gamma=\kappa$ by the leastness assumption.
\end{proof}

We will now give compactness characterizations of several other virtual large cardinal notions by reformulating the known compactness properties of the original large cardinals in terms of pseudo models. At the same time, we will see that such a translation fails to hold for the virtual \Vopenka's principle as a consequence of the splitting of virtual $C^{(n)}$-extendibility into the weak and strong forms.

It is a folklore result that measurable cardinals are precisely the chain compactness cardinals for $\bL_{\kappa,\kappa}$ (see for instance, \cite{changkeisler}, Exercise 4.2.6) .
\begin{theorem}
The following are equivalent.
\begin{enumerate}
\item $\kappa$ is virtually measurable.
\item $\kappa$ is a $\kappa^+$-pseudo-chain compactness cardinal for $\bL_{\kappa,\kappa}$.
\item $\kappa$ is an $\omega$-pseudo-chain compactness cardinal for $\bL_{\kappa,\kappa}$.
\end{enumerate}
\end{theorem}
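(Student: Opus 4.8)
The plan is to adapt the virtualization of Magidor's argument carried out in Theorems~\ref{ve-thm} and~\ref{wve-thm} to the first-order infinitary logic $\bL_{\kappa,\kappa}$, using the folklore equivalence between measurability and chain compactness for $\bL_{\kappa,\kappa}$ as a template. The implication $(2)\Rightarrow(3)$ is immediate: any $\kappa^+$-forth system contains the subcollection of its finite renamings, and this subcollection is an $\omega$-forth system witnessing that the same structure is an $\omega$-pseudo-model. So the real work is in $(1)\Rightarrow(2)$ and $(3)\Rightarrow(1)$.

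For $(1)\Rightarrow(2)$, I would assume $\kappa$ is virtually measurable and let $T=\bigcup_{\eta<\kappa}T_\eta$ be an increasing union of satisfiable $\bL_{\kappa,\kappa}(\tau)$-theories. Fix a map $F$ with $F(\eta)\vDash T_\eta$ and choose $\alpha>\kappa$ large enough that $V_\alpha$ contains $\tau$, $T$, $\langle T_\eta\mid\eta<\kappa\rangle$, $F$, and the relevant restriction of $\vDash_{\bL_{\kappa,\kappa}}$, and correctly sees that each $T_\eta$ is satisfiable. Using virtual measurability, pass to a forcing extension with an elementary embedding $j:V_\alpha\to\cM$ with $\crit j=\kappa$ and $\cM$ transitive. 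By elementarity $j(\langle T_\eta\rangle)$ is an increasing sequence of length $j(\kappa)$ all of whose entries are satisfiable in $\cM$, and since $\crit j=\kappa$ we have $j\image T\subseteq j(\langle T_\eta\rangle)(\kappa)$; hence $\cN:=j(F)(\kappa)\in\cM\subseteq V$ is a model of $j\image T$. It then remains to exhibit $\cN$ as a $\kappa^+$-pseudo-model of $T$, exactly as in the proof of Theorem~\ref{ve-thm}: working with a name $\dot j$ for the virtual embedding, let $\cF$ be the collection of renamings $f:\sigma\to\sigma^*$ with $\sigma\in\cP_{\kappa^+}\tau$ and $\sigma^*\in\cP_{\kappa^+}j(\tau)$ that some condition forces to equal $\dot j\rest\check\sigma$; this $\cF$ is nonempty by Observation~\ref{obs:kunen} and has the extension property by the usual density argument. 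Since the syntax of $\bL_{\kappa,\kappa}$ is explicit, $f_*(\phi)=j(\phi)$ for $\phi\in\bL_{\kappa,\kappa}(\sigma)$, so $\cN\vDash f_*\image(T\cap\bL_{\kappa,\kappa}(\dom f))$ follows at once from $\cN\vDash j\image T$. Note that the closure of the target in the definition of virtual measurability is not needed in this direction.

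For $(3)\Rightarrow(1)$, I would fix $\alpha>\kappa$ of cofinality at least $\kappa$ and let $\tau$ have $\in$ together with constants $\{c_x\mid x\in V_\alpha\}\cup\{c\}$. Take $T$ to consist of: the $\bL_{\kappa,\kappa}$ elementary diagram of $(V_\alpha,\in,c_x)_{x\in V_\alpha}$; the statements $\{c_\xi\neq c<c_\kappa\mid\xi<\kappa\}$; the well-foundedness assertion $\neg\exists\langle x_n\mid n<\omega\rangle\bigwedge_{n<\omega}x_{n+1}\in x_n$ (available since $\kappa>\omega$); and, for each $\gamma<\kappa$, a sequence-closure sentence $\forall\langle x_\xi\mid\xi<\gamma\rangle\,\exists y\,(y\text{ is a function with domain }\check\gamma\wedge\bigwedge_{\xi<\gamma}y(\check\xi)=x_\xi)$. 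Filtrating the statements in the second item writes $T$ as an increasing $\kappa$-union of theories each satisfied by $V_\alpha$ (cofinality at least $\kappa$ guarantees $V_\alpha$ is itself closed under $<\kappa$-sequences, so the sequence-closure sentences hold there). By hypothesis $T$ has an $\omega$-pseudo-model; after fixing the renaming of $\in$ and forcing with the witnessing forth system as in Theorem~\ref{ve-thm}, I obtain a virtual $\bL_{\kappa,\kappa}$-elementary embedding $j:V_\alpha\to\cN$. The well-foundedness sentence forces $\cN$ to be well-founded, so I replace it by its transitive collapse $\cM$, and the statements $\{c_\xi\neq c<c_\kappa\}$ together with the definability of each ordinal below $\kappa$ in $\bL_{\kappa,\omega}$ give $\crit j=\kappa$.

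The hard part, and the reason the logic must be $\bL_{\kappa,\kappa}$ rather than $\bL_{\kappa,\omega}$, is producing a target closed under $<\kappa$-sequences, as the definition of virtually measurable demands. This is exactly what the sequence-closure sentences buy: each such sentence is genuinely infinitary, quantifying over $\gamma<\kappa$ variables and taking a $\gamma$-conjunction, so it lives in $\bL_{\kappa,\kappa}$ but not $\bL_{\kappa,\omega}$, and by $\bL_{\kappa,\kappa}$-elementarity it transfers from $V_\alpha$ to $\cM$. Given any $\langle a_\xi\mid\xi<\gamma\rangle\in\cM^{<\kappa}$ with $\gamma<\kappa$, the corresponding sentence yields $y\in\cM$ with $y(\xi)=a_\xi$ for all $\xi<\gamma$; since $\cM$ is transitive and the decoding relation is absolute, the sequence is recovered inside $\cM$ from $y$ and $\gamma\in\cM$, giving $\cM^{<\kappa}\subseteq\cM$. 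With the standard set-theoretic bookkeeping this shows $\kappa$ is virtually measurable and closes the cycle $(1)\Rightarrow(2)\Rightarrow(3)\Rightarrow(1)$. I expect the only delicate point to be checking that these sequence-closure sentences remain satisfiable along the filtration and genuinely force closure rather than merely internal coding, which is why transitivity of $\cM$ and absoluteness of the decoding are emphasized.
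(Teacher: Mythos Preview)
Your overall strategy matches the paper's, and your $(3)\Rightarrow(1)$ is essentially identical to the paper's argument (the paper simply notes that well-foundedness and ${<}\kappa$-closure are $\bL_{\kappa,\kappa}$-expressible and hence already sit in the elementary diagram of $V_\alpha$, where you write out explicit sentences to the same effect).

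There is, however, a genuine gap in your $(1)\Rightarrow(2)$, and it is precisely the point you flag as unnecessary. You assert that $\cN:=j(F)(\kappa)\in\cM\subseteq V$ is a model of $j\image T$ and then remark that the closure $\cM^{{<}\kappa}\subseteq\cM$ is not needed. But elementarity only tells you that $\cM$ \emph{believes} $\cN\vDash j(\vec T)(\kappa)$, i.e., satisfaction according to $\cM$'s internal $\bL_{j(\kappa),j(\kappa)}$-relation. For $\cN$ to be a $\kappa^+$-pseudo-model of $T$ in $V$ you need $\cN\vDash_V f_*(\phi)$ for each $f\in\cF$ and $\phi\in T\cap\bL_{\kappa,\kappa}(\dom f)$, which amounts to $\cN\vDash_V j(\phi)$. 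Each $j(\phi)$ is a genuine $\bL_{\kappa,\kappa}$-sentence (its conjunctions and quantifier blocks have size ${<}\kappa$ since $\crit j=\kappa$), and the absoluteness of $\bL_{\kappa,\kappa}$-satisfaction between a transitive $\cM$ and $V$ for such sentences requires exactly that $\cM$ contain all ${<}\kappa$-sequences from $\cN$. Without $\cM^{{<}\kappa}\subseteq\cM$, $\cM$ could miss a witness or counterexample to a ${<}\kappa$-quantifier block, so $\cM$'s verdict on $\cN\vDash j(\phi)$ need not agree with $V$'s. The paper invokes the closure at precisely this step: ``Since $j\image T\subseteq\bL_{\kappa,\kappa}(j(\tau))$ and $\cM$ is closed under sequences of length less than $\kappa$, it is correct about $\cN$ being a model of $j\image T$.'' So drop the parenthetical remark and insert this absoluteness argument; the closure is available by the definition of virtually measurable, and once you use it the proof goes through.
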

\begin{proof}
Assume that $\kappa$ is an $\omega$-pseudo-chain compactness cardinal for $\bL_{\kappa,\kappa}$ and fix $\alpha>\kappa$ with $\text(cof)(\alpha)>\kappa$. Let $\tau$ be the language consisting of a binary relation $\in$ and constants $\{c_x\mid x\in V_\alpha\}\cup \{c\}$. Let $T$ be the following $\bL_{\kappa,\kappa}(\tau)$-theory:
$${\rm ED}_{\bL_{\kappa,\kappa}}(V_\alpha, \in,c_x)_{x\in V_\alpha}\cup \{c_\xi\neq c<c_\kappa\mid \xi<\kappa\},$$
where each constant $c_x$ is interpreted as $x$.  We can filtrate $\bar T$ by including in $T_\eta$ the statements $\{c_\xi \neq c < c_\kappa \mid \xi < \eta\}$. By assumption, $T$ has a pseudo-model $\cM$. The model $\cM$ must be well-founded because well-foundedness is expressible in $\bL_{\kappa,\kappa}$, and so this assertion must be contained in the elementary diagram of $V_\alpha$. Next, note $\cM$ is closed under ${<}\kappa$-sequences because ${<}\kappa$-closure is  expressible in $\bL_{\kappa,\kappa}$ and $V_\alpha^{{<}\kappa}\subseteq V_\alpha$ by our choice of $\alpha$. Finally, there is a virtual elementary embedding $j:V_\alpha\to \cM$ with $\crit j=\kappa$ (since every ordinal below $\kappa$ is $\bL_{\kappa,\kappa}$-definable).

In the other direction, suppose that $\kappa$ is virtually measurable and $T=\bigcup_{\eta<\kappa}T_\eta$ is a  $\bL_{\kappa, \kappa}(\tau)$-theory such that $\vec T=\langle T_\eta\mid\eta<\kappa\rangle$ is an increasing sequence of satisfiable theories. Let $\alpha$ be large enough so that $V_\alpha$ witnesses all this and let $j:V_\alpha\to \cM$ with $\crit j =\kappa$ be a virtual elementary embedding. By elementarity, $\M$ satisfies that $j(\vec T)$ is an increasing sequence of theories and for all $\eta<j(\kappa)$, $j(\vec T)(\eta)$ is satisfiable, so in particular, it has a model $\cN\models j(T)(\kappa)\supseteq j\image T$. Since $j\image T\subseteq \bL_{\kappa,\kappa}(j(\tau))$ and $\cM$ is closed under sequences of length less than $\kappa$, it is correct about $\cN$ being a model of $j\image T$. The model $\cN$ is the required $\kappa^+$-pseudo-model for $T$.
\end{proof}

Benda \cite{b-sccomp} provided a compactness characterization of supercompact cardinals in terms of a variant of chain compactness together with omitting types, which has been extended by the first author to other cardinals \cite{b-mtlc}. We will need to incorporate omitting types into our pseudo-models framework in order to give a reformulation for virtually supercompact cardinals.
\begin{defin}
We will say that a $\delta$-pseudo-model $\cM$ in a language $\tau^*$ \emph{omits} an $\cL(\tau)$-type $p(x)$ if there is a $\delta$-forth system $\cF$ from $\tau$ to $\tau^*$ such that, for all $f:\sigma\to\sigma^*$ from  $\cF$, $\cM$ models $f^*\image (T\cap \cL(\sigma))$ and omits $f^*\image (p\cap \cL(\sigma))$.
\end{defin}
\begin{defin}
We will say that an $\cL(\tau)$-theory $T$ is \emph{increasingly filtered by $\cP_\kappa\delta$} if $T$ is the union of a sequence $\vec T=\langle T_s\mid s\in \cP_\kappa\delta\rangle$ such that whenever $s\subseteq s'$, then $T_s\subseteq T_{s'}$, and we will call $\vec T$, an \emph{increasing filtration} of $T$.

\end{defin}

\begin{theorem}\label{th:virtuallySupercompactCharacterization} The following are equivalent for a cardinal $\kappa$.
\begin{enumerate}
\item $\kappa$ is virtually supercompact $($remarkable$)$.
\item For every $\delta>\kappa$, whenever $T$ is an $\bL_{\kappa,\kappa}(\tau)$ theory that is increasingly filtered by $\cP_{\kappa}\delta$ and $p^a(x)$ for $a\in A$ is some set of types each of which is increasingly filtered by $\cP_{\kappa}\delta$ such that every $T_s$ has a model omitting all $p^a_s(x)$, then there is a pseudo-model of $T$ omitting all $p^a(x)$.

\item Same as $(2)$ but with pseudo-model replaced by $\kappa^+$-pseudo-model.
\end{enumerate}
\end{theorem}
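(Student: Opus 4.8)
The plan is to establish the two extreme implications $(1)\Rightarrow(3)$ and $(2)\Rightarrow(1)$, the first producing the strongest ($\kappa^+$-) pseudo-model from virtual supercompactness and the second extracting virtual supercompactness from the weakest ($\omega$-) pseudo-compactness hypothesis; the remaining implication linking $(2)$ and $(3)$ I would dispatch by directly comparing the associated finite and $\kappa^+$-forth systems, exactly in the spirit of the ``the rest is trivial'' steps of Theorem \ref{ve-thm}. Throughout I would use the reformulation of virtual supercompactness recorded above, in which the closure of the target is phrased as $V_\lambda\subseteq\cM$ rather than $\cM^\lambda\subseteq\cM$, since the former is what a pseudo-model naturally supplies.

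For $(1)\Rightarrow(3)$ I would run the seed argument used for the virtually measurable characterization, now feeding in the supercompactness closure. Fix $\delta>\kappa$ together with $T=\bigcup_{s\in\cP_\kappa\delta}T_s$ and the types $\langle p^a\mid a\in A\rangle$ as in $(3)$, and let $F$ be a function sending each $s\in\cP_\kappa\delta$ to a model $F(s)\vDash T_s$ omitting every $p^a_s$. Choose $\lambda\geq|\delta|$ above the rank of this data and apply virtual supercompactness to obtain, in a collapse extension, a virtual elementary embedding $j:V_\alpha\to\cM$ with $\crit j=\kappa$, $j(\kappa)>\lambda$, and $\cM$ closed enough that the seed $s_0:=j\image\delta$ lies in $\cM$ with $|s_0|^{\cM}<j(\kappa)$. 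Setting $\cN:=j(F)(s_0)$ as computed in $\cM$, elementarity gives that $\cN$ models $j(T)_{s_0}\supseteq j\image T$ and omits every $j(p^a)_{s_0}$. The forth system $\cF$ from $\tau$ to the language of $\cN$ is then defined exactly as in Theorem \ref{ve-thm}: fixing a name $\dot j$ for the embedding, let $\cF$ collect all renamings $f:\sigma\to\sigma^*$ with $\sigma\in\cP_{\kappa^+}\tau$ that some condition forces to equal $\dot j\restriction\sigma$, with nonemptiness coming from Observation \ref{obs:kunen} and the extension property verified as before. Checking that $\cN$ together with $\cF$ is a $\kappa^+$-pseudo-model of $T$ is then routine, since for $f=j\restriction\sigma\in\cF$ and $\phi\in T\cap\bL_{\kappa,\kappa}(\sigma)$ we have $f_*(\phi)=j(\phi)\in j\image T$, and $\cM$ computes $\bL_{j(\kappa),j(\kappa)}$-satisfaction correctly by its closure.

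I expect the omission clause to be the main obstacle. I must show that for each $f=j\restriction\sigma\in\cF$ the model $\cN$ omits the renamed piece $j\image\!\left(p^a\cap\bL_{\kappa,\kappa}(\sigma)\right)$, whereas what elementarity hands me directly is omission of the seed value $j(p^a)_{s_0}$, and these two facts pull in opposite directions of type-inclusion. The delicate point is to exploit that both $T$ and each $p^a$ are filtered by the \emph{same} $\cP_\kappa\delta$ and that the seed is $s_0=j\image\delta$, so that the filtration index of a formula $\psi\in p^a$ and the forth-system piece $\sigma$ carrying $\psi$ are linked; the argument must arrange that a candidate realizer of a renamed piece inside $\cN$ is already a candidate realizer of $j(p^a)_{s_0}$, so that the provable omission of the seed value transfers down to omission along $\cF$. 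Getting this matching right, and confirming it is compatible with the coherence demanded of the forth system, is the technical heart of the proof.

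For $(2)\Rightarrow(1)$ I would run a Magidor-style coding argument as in Theorem \ref{ve-thm} and the virtually measurable characterization, now driven by type omission to supply the supercompactness closure. Given $\lambda>\kappa$, pick $\alpha>\lambda$ and let $\tau$ contain $\in$, constants $c_x$ for $x\in V_\alpha$, and ordinal markers $d_\xi$ for $\xi\leq\alpha$; take $T$ to consist of ${\rm ED}_{\bL_{\kappa,\kappa}}(V_\alpha,\in,c_x)_{x\in V_\alpha}$, the sentences $\{c_\xi\neq c<c_\kappa\mid\xi<\kappa\}$, Magidor's $\Phi$, and $\{d_\xi<d_\eta<c_\kappa\mid\xi<\eta\leq\alpha\}$, while introducing a type $p(x)$ whose omission forces every element of small rank in the target to be named, thereby delivering $V_\lambda\subseteq\cM$. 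Filtering both $T$ and $p$ by $\cP_\kappa\delta$ for $\delta=|V_\alpha|$, the hypothesis of $(2)$ holds because $V_\alpha$ itself realizes each finite piece while omitting the corresponding piece of $p$. The resulting $\omega$-pseudo-model, after the Mostowski collapse given by $\Phi$ and forcing with the witnessing forth system to a bijection $\tau\to\tau^*$, yields a virtual elementary embedding $j:V_\alpha\to\cM$ with $\crit j=\kappa$ and $j(\kappa)>\alpha$, and the omission of $p$ guarantees $V_\lambda\subseteq\cM$, so by the $V_\lambda\subseteq\cM$ reformulation of virtual supercompactness, $\kappa$ is virtually supercompact. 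As in the forward direction, the crux is to design the type $p$ and its $\cP_\kappa\delta$-filtration so that stagewise omission is simultaneously satisfiable in the hypothesis and strong enough, once transported through the forth system, to force full closure $V_\lambda\subseteq\cM$ rather than mere ${<}\kappa$-closure.
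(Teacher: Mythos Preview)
Your $(1)\Rightarrow(3)$ tracks the paper's argument closely---a seed $s^*\supseteq j\image\delta$ in the target, evaluation of $j(F)$ (resp.\ $j(\vec T)$, $j(\vec p)$) at that seed, and a $\kappa^+$-forth system built from the forcing relation---and you correctly flag the type-omission direction mismatch as the delicate point; the paper treats this step tersely as well.

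Your $(2)\Rightarrow(1)$, however, contains a concrete error and misses the real technical content. The error: Magidor's $\Phi$ is an $\bL^2$-sentence, not an $\bL_{\kappa,\kappa}$-sentence, so it cannot appear in $T$. The paper does not use $\Phi$; well-foundedness is extracted from the $\bL_{\kappa,\kappa}$-elementary diagram of $V_\alpha$, and the target is merely a transitive set, not a $V_\beta$. The substantive gap is your ``type $p(x)$ to be designed.'' A single type cannot force $V_\lambda\subseteq\cM$. The paper uses constants $d_x$ for \emph{all} $x\in V_\lambda$ (not just ordinal markers), adds $\{d_b\in d_a\mid b\in a,\,a\in V_\lambda\}$ to the theory, and takes a \emph{family} of types $p^a(x)=\{x\in d_a\}\cup\{x\neq d_b\mid b\in a\}$ for $a\in V_\lambda$; omitting all of them makes $\{d_x^{\cM}\mid x\in V_\lambda\}$ a transitive copy of $V_\lambda$ inside $\cM$. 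The step you do not anticipate is how to make each $T_s$ have a model omitting every $p^a_s$: with $\delta=\lambda$ and a bijection $f:\lambda\to V_\lambda$, for $s\in\cP_\kappa\lambda$ one takes $X_s\prec V_\alpha$ generated by $f\image s$, and interprets $d_b$ in $V_\alpha$ as $\pi(b)$ for the Mostowski collapse $\pi:X_s\to M$. This collapse trick is precisely what kills any potential realizer of $p^a_s$, and nothing in your sketch suggests it.
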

\begin{proof}
Suppose that $\kappa$ is virtually supercompact. Fix an $\bL_{\kappa,\kappa}(\tau)$-theory $T=\bigcup_{s\in\cP_{\kappa}\delta} T_s$ with an increasing filtration $\vec T=\langle T_s\mid s\in\cP_\kappa\delta\rangle$ and $\bL_{\kappa,\kappa}(\tau)$-types $p^a(x)=\bigcup_{s\in\cP_{\kappa}\delta} p_s^a(x)$ indexed by $a\in A$ with increasing filtrations $\vec p^{\,a}=\langle p^a_s(x)\mid s\in \cP_\kappa\delta\rangle$ satisfying the hypothesis of the theorem. Let $\lambda$ be a large enough $\beth$-fixed point of cofinality $\kappa^+$ so that $V_\lambda$ sees all this. Choose $\alpha>\lambda$ such that there is a transitive model $\cN$ closed under $\lambda$-sequences and a virtual elementary embedding $j:V_\alpha \to \cN$ with $\crit j=\kappa$ and $j(\kappa)>\lambda$. Consider the restriction $j:V_\lambda\to j(V_\lambda)$. Observe that since $V_\lambda$ is closed under $\kappa$-sequences by cofinality considerations, $j(V_\lambda)$ is closed under $j(\kappa)$-sequences in $\cN$ by elementarity. Thus, $j(V_\lambda)$ is truly closed under $\lambda$-sequences, since $\cN$ is. We will not use the embedding $j$, but move to a ${\rm Coll}(\omega,V_\lambda)$-extension $V[G]$ with an elementary embedding $h:V_\lambda\to \bar \cN =j(V_\lambda)$ with $\crit h=\kappa$ and $h(\kappa)>\lambda$.

Since the forcing ${\rm Coll(\omega,V_\lambda)}$ has size $|V_\lambda|=\lambda$ because we chose $\lambda$ to be a $\beth$-fixed point, we can cover $h\image\lambda$ by a set $s^* \subseteq j(\lambda)$ of size $\lambda<h(\kappa)$ in $V$. Next, observe crucially that $s^*$ is an element of $\bar\cN$ by $\lambda$-closure. By elementarity, $\bar\cN$ satisfies that $h(\vec T)_{s^*}$ has a model $\cM$ omitting all $h(p)^a_{s^*}$ for $a\in j(A)$. Since $h\image\delta \subseteq s^*$, we have for every $s\in\cP_\kappa\delta$ that $h(s)=h\image s\subseteq s^*$. It follows that $h\image T_s\subseteq h(\vec T_s)=h(\vec T)_{h(s)}\subseteq h(\vec T)_{s^*}$, and similarly $h\image \vec p^{\,a}_s\subseteq h(\vec p^{\,a}_s)\subseteq h(\vec p)_{s^*}^{\,h(a)}$. It follows that $\cM$ is the required $\kappa^+$-pseudo-model.

In the other direction, suppose that we have the compactness assumption in (2) and fix a singular $\beth$-fixed point $\lambda>\kappa$ and $\alpha>\lambda$. Let $\tau$ be the language consisting of a binary relation $\in$ and constants $\{ c_x\mid x\in V_\alpha\}\cup \{d_x \mid x\in V_\lambda\}\cup \{c\}$. Let $T$ be the following $\bL_{\kappa,\kappa}(\tau)$-theory:
$${\rm ED}_{\bL_{\kappa,\kappa}}(V_\alpha,\in,c_x)_{x\in V_\alpha}\cup \{c_\xi\neq c<c_\kappa\mid \xi<\kappa\}\cup \{d_b\in d_a\mid b\in a,\,a\in V_\lambda\}\cup\{d_\xi<d_\eta<c_\kappa\mid \xi<\eta<\lambda\},$$ and let $p^a(x)$, for $a\in V_\lambda$, be the following $\bL_{\kappa,\kappa}(\tau)$-types:
$$\{x\in d_a\}\cup \{x\neq d_b\mid b\in a\}.$$
Now let us find a filtration for the theory $T$ and the types $p^a(x)$ such that $V_\alpha$ can be made, by correctly interpreting the constants $d_a$, into a model of $T_s$ omitting all $p^a_s(x)$. Fix a bijection $f:\lambda\to V_\lambda$. Given $s\in \cP_\kappa \lambda$, let $X_{s}\prec V_\alpha$ be the elementary substructure of $V_\alpha$ generated by $f\image s\subseteq V_\lambda$. Let ${\rm ED}_{\bL_{\kappa,\kappa}}(V_\alpha,\in,c_x)_{x\in V_\alpha}\subseteq T_s$, but $T_s$ is only allowed to mention sentences $c_\xi\neq c<c_\kappa$ if $c_\xi\in X_s$, it is only allowed to mention sentences with constants $d_a$ if $a\in X_s$. Let $p^a_s(x)=\emptyset$ if $a\notin X_s$. Otherwise, suppose $a\in X_s$. In this case, let $p^a_s(x)$ mention only formulas $x\neq d_b$ for $b\in X_s$. Let $\pi:X_s \to M$ be the Mostowski collapse. To make $V_\alpha$ into a model of $T_s$ omitting $p^a_s(x)$, we will first interpret all $c_x$ as $x$. For every $b\in X_s\cap V_\lambda$, let $d_b$ be interpreted as $\pi(b)$. This ensures that there is no space to interpret $x$ to satisfy $p^a_s(x)$.

By assumption, $T$ has a well-founded pseudo-model $\cM$ omitting all $p^a(x)$, and we can assume without loss that it is transitive. Let $\cF$ be the associated forth system between the languages $\tau$ and $\tau^*$, and let us force with $\cF$ to add a generic injection $F:\tau\to\tau^*$. Define $F_*(\phi(x))=f_*(\phi(x))$ for some/any $f\in \cF$ such that $f\subseteq F$. In the forcing extension, we get an elementary embedding $j:V_\alpha\to \cM$ with $\crit j=\kappa$, $j(\kappa)>\lambda$ (we cannot have $j(\kappa)=\lambda$ since we chose $\lambda$ to be singular) and the model $\cM$ omits all types $p^{*a}(x):=F_*\image p^a(x)$. It follows that $\cM$ has a transitive subset isomorphic to $V_\lambda$ and so $V_\lambda\subseteq \cM$.
\end{proof}
The proof of Theorem~\ref{th:virtuallySupercompactCharacterization} shows that a virtual version of strongly compact cardinals is equivalent to virtual supercompactness.
\begin{defin}
A cardinal $\kappa$ is \emph{virtually strongly compact} if and only if for every $\lambda>\kappa$, there is $\alpha>\kappa$ and a transitive $\cM$ with $\cM^{{<}\kappa}\subseteq \cM$ and $s\in \cM$ such that there is a virtual elementary embedding $j:V_\alpha\to \cM$ with $\crit j=\kappa$, $j(\kappa)>\lambda$, $|s|^{\cM}<j(\kappa)$ and $j\image\lambda\subseteq s$.
\end{defin}
\begin{theorem}
A cardinal $\kappa$ is virtually strongly compact if and only if it is virtually supercompact.
\end{theorem}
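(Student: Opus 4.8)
The plan is to establish the two implications separately, in both cases reusing the machinery already assembled in the proof of Theorem~\ref{th:virtuallySupercompactCharacterization}; indeed the equivalence is essentially read off from that proof once one notices that its forward direction manufactures precisely the covering data demanded by virtual strong compactness, while its converse consumes exactly such data.

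For the direction from virtual supercompactness to virtual strong compactness, fix $\lambda>\kappa$, which we may assume to be a $\beth$-fixed point. Virtual supercompactness yields $\alpha>\lambda$, a transitive $\cN$ closed under $\lambda$-sequences, and a virtual elementary embedding $j:V_\alpha\to\cN$ with $\crit j=\kappa$ and $j(\kappa)>\lambda$. Exactly as in the opening of the forward direction of Theorem~\ref{th:virtuallySupercompactCharacterization}, I would restrict to $j:V_\lambda\to j(V_\lambda)$, note that $\cM:=j(V_\lambda)$ is itself closed under $\lambda$-sequences (hence under ${<}\kappa$-sequences), and invoke the absoluteness corollary following Lemma~\ref{lem:absolutenessLemma} to pass to a ${\rm Coll}(\omega,V_\lambda)$-extension carrying an elementary embedding $h:V_\lambda\to\cM$ with $\crit h=\kappa$ and $h(\kappa)>\lambda$. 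Since this collapse has size $|V_\lambda|=\lambda$, a routine covering argument produces a set $s\in V$ of size $\lambda$ with $h\image\lambda\subseteq s\subseteq j(\lambda)$; by the $\lambda$-closure of $\cM$ we get $s\in\cM$, and $|s|^\cM=\lambda<h(\kappa)$. Then $\cM$, $s$, and $h:V_\lambda\to\cM$ (taking the ambient $\alpha$ of the definition to be $\lambda$) witness that $\kappa$ is virtually strongly compact at $\lambda$.

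For the converse, I would show that a virtually strongly compact $\kappa$ satisfies the pseudo-compactness property of clause (2) of Theorem~\ref{th:virtuallySupercompactCharacterization}, so that $\kappa$ is virtually supercompact by that theorem. Given $\delta>\kappa$, an $\bL_{\kappa,\kappa}(\tau)$-theory $T$ increasingly filtered by $\cP_\kappa\delta$, and types $p^a$ similarly filtered so that every $T_s$ has a model omitting all $p^a_s$, fix $\lambda\geq\delta$ large enough that some $V_\alpha$ sees all of this data together with the witnessing assignment of models. Apply virtual strong compactness at $\lambda$ to obtain a virtual $j:V_\alpha\to\cM$ with $\crit j=\kappa$, $j(\kappa)>\lambda$, together with $s\in\cM$ satisfying $j\image\lambda\subseteq s$ and $|s|^\cM<j(\kappa)$, where $\cM$ is transitive and closed under ${<}\kappa$-sequences. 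Now one repeats the end of the forward direction of Theorem~\ref{th:virtuallySupercompactCharacterization}: by elementarity $\cM$ believes that $j(\vec T)_{\,s\cap j(\delta)}$ has a model omitting all $j(\vec p)^{\,a}_{\,s\cap j(\delta)}$, and because $j\image\delta\subseteq s$ this fragment contains $j\image T$ and the corresponding images of the types; forcing with the forth system assembled from the ${<}\kappa$-pieces of the renaming $j\rest\tau$ (as in Theorems~\ref{ve-thm} and \ref{th:virtuallySupercompactCharacterization}) then delivers a pseudo-model of $T$ omitting every $p^a$. This verifies clause (2), whence $\kappa$ is virtually supercompact.

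The main obstacle I anticipate is the correctness of the target $\cM$ in the converse direction: virtual strong compactness supplies only ${<}\kappa$-closure of $\cM$ (rather than the $\lambda$-closure available in Theorem~\ref{th:virtuallySupercompactCharacterization}), so I must check that this suffices for $\cM$ to compute $\bL_{\kappa,\kappa}$-satisfaction and ${<}\kappa$-length type omission correctly, i.e.\ that $\cM$'s internal model genuinely omits the types in $V$. Since satisfaction of a single $\bL_{\kappa,\kappa}$-formula and the failure of a ${<}\kappa$-sized type are each determined by ${<}\kappa$-much information, ${<}\kappa$-closure is exactly the hypothesis that makes these absolute between $\cM$ and $V$; confirming this carefully (and matching the covering set $s$ to the index set $\cP_{j(\kappa)}j(\delta)$ used in the pseudo-model construction) is the one place where the argument must be done with care rather than quoted wholesale.
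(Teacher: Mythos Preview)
Your proposal is correct and follows essentially the same route as the paper. The paper's proof is a one-line reference back to Theorem~\ref{th:virtuallySupercompactCharacterization}: it observes that the forward direction of that theorem already manufactures the covering set $s^*\in\bar\cN$ with $|s^*|^{\bar\cN}<h(\kappa)$ (giving supercompact $\Rightarrow$ strongly compact implicitly), and that the only use of $\lambda$-closure in that argument beyond producing $s^*$ is correctness for $\bL_{\kappa,\kappa}$, for which ${<}\kappa$-closure suffices (giving strongly compact $\Rightarrow$ clause~(2) $\Rightarrow$ supercompact). You have unpacked both halves explicitly, including the passage to a ${\rm Coll}(\omega,V_\lambda)$-extension and the $\lambda$-closure step that places the cover inside $\cM$, and you have correctly isolated ${<}\kappa$-closure as the exact hypothesis needed for absoluteness of $\bL_{\kappa,\kappa}$-satisfaction and type omission; this is precisely the paper's parenthetical ``Note that we need the closure on $\cM$ to verify that it is correct about models of $\bL_{\kappa,\kappa}$.'' One small wording fix: when you write ``fix $\lambda\geq\delta$ large enough that some $V_\alpha$ sees all of this data,'' the $\alpha$ is handed to you by the definition rather than chosen; what you mean (and what works) is to take $\lambda$ large enough that the data lies in $V_\lambda$, since $j\image\lambda$ being defined forces $\alpha\geq\lambda$.
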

\begin{proof}
It suffices to observe, from the proof of Theorem~\ref{th:virtuallySupercompactCharacterization}, that the compactness property of Theorem~\ref{th:virtuallySupercompactCharacterization} follows from virtual strong compactness. Note that we need the closure on $\cM$ to verify that it is correct about models of $\bL_{\kappa,\kappa}$.
\end{proof}
It is not difficult to see that virtually strongly compact cardinals are the $\kappa^+$-pseudo-compactness cardinals for $\bL_{\kappa,\kappa}$. Thus, virtually supercompact cardinals are precisely the $\kappa^+$-pseudo-compactness cardinals for $\bL_{\kappa,\kappa}$.
\begin{theorem}
The following are equivalent.
\begin{enumerate}
\item $\kappa$ is virtually strongly compact.
\item $\kappa$ is a $\kappa^+$-pseudo-compactness cardinal for $\bL_{\kappa,\kappa}$. 
\item $\kappa$ is an $\omega$-pseudo-compactness cardinal for $\bL_{\kappa,\kappa}$.
\end{enumerate}
\end{theorem}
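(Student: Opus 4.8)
The plan is to establish $(1)\Rightarrow(2)\Rightarrow(3)\Rightarrow(1)$, reusing the templates from Theorem~\ref{ve-thm}, Theorem~\ref{thm:wfsc-char}, and the characterization of virtually measurable cardinals. For $(1)\Rightarrow(2)$ I would mimic the forward direction of Theorem~\ref{ve-thm}, inserting the covering feature exactly as in the proof of Theorem~\ref{thm:wfsc-char}. Given a ${<}\kappa$-satisfiable $\bL_{\kappa,\kappa}(\tau)$-theory $T$, fix a selector $F\colon\cP_\kappa T\to\Str\tau$ with $F(s)\vDash s$ and a $\beth$-fixed point $\lambda$ with $T,\tau,F\in V_\lambda$. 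Virtual strong compactness supplies, in a collapse extension, an elementary embedding $j\colon V_\alpha\to\cM$ with $\alpha>\lambda$ and $V_\alpha^{{<}\kappa}\subseteq V_\alpha$, $\crit j=\kappa$, $j(\kappa)>\lambda$, $\cM$ transitive with $\cM^{{<}\kappa}\subseteq\cM$, and $s^*\in\cM$ with $j\image\lambda\subseteq s^*$ and $|s^*|^\cM<j(\kappa)$. Fixing a bijection $e\colon\mu\to T$ with $\mu\le\lambda$ lying in $V_\alpha$, and writing $\phi=e(\xi)$, we have $j(\phi)=j(e)(j(\xi))$ with $j(\xi)\in j\image\lambda\subseteq s^*$, so $Y:=j(e)\image(s^*\cap j(\mu))\in\cM$ is a subset of $j(T)$ of $\cM$-size below $j(\kappa)$ that covers $j\image T$. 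The pseudo-model is $\cM^\dagger:=j(F)(Y)$, which by elementarity models $Y\supseteq j\image T$, and the $<\kappa$-closure of $\cM$ is what guarantees that this $\bL_{\kappa,\kappa}$-satisfaction is genuine. I would then define the $\kappa^+$-forth system $\cF$ through the forcing relation as in Theorem~\ref{ve-thm} (all renamings forced to equal $\dot j\restriction\check\sigma$ for $\sigma\in\cP_{\kappa^+}\tau$); it is nonempty by Observation~\ref{obs:kunen} and has the extension property by a density argument. Since $f_*(\phi)=j(\phi)\in j\image T$ for $\phi\in T\cap\cL(\dom f)$, the structure $\cM^\dagger$ models $f_*\image(T\cap\cL(\dom f))$ for every $f\in\cF$.

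The implication $(2)\Rightarrow(3)$ is purely formal: a $\kappa^+$-pseudo-model is already an $\omega$-pseudo-model. If $\cF$ witnesses the former, put $\cF'=\{f\restriction\sigma_0\colon f\in\cF,\ \sigma_0\in\cP_\omega(\dom f)\}$. One checks that $\cF'$ is an $\omega$-forth system: the extension property follows by first extending inside $\cF$ (a finite $\tau_0$ lies in $\cP_{\kappa^+}\tau$) and then restricting. Since renamings respect restriction, $(f\restriction\sigma_0)_*=f_*\restriction\cL(\sigma_0)$ and $T\cap\cL(\sigma_0)\subseteq T\cap\cL(\dom f)$, so the same $\cM$ continues to model $(f\restriction\sigma_0)_*\image(T\cap\cL(\sigma_0))$.

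For $(3)\Rightarrow(1)$ I would adapt the converse directions of the virtually measurable characterization and of Theorem~\ref{thm:wfsc-char}. Fixing $\lambda>\kappa$, choose $\alpha>\lambda$ with $V_\alpha^{{<}\kappa}\subseteq V_\alpha$, let $\tau$ consist of $\in$, constants $c_x$ for $x\in V_\alpha$, and constants $c$ and $s$, and let $T$ be the $\bL_{\kappa,\kappa}$-elementary diagram of $(V_\alpha,\in,c_x)_{x\in V_\alpha}$ together with $\{c_\xi\neq c<c_\kappa\mid\xi<\kappa\}$, $\{c_\xi\in s\mid\xi<\lambda\}$, and the sentence $|s|<c_\kappa$. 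This theory is ${<}\kappa$-satisfiable: interpret $s$ as a set absorbing the fewer-than-$\kappa$ constants $c_\xi$ that actually appear and $c$ as $\kappa$. An $\omega$-pseudo-model $\cM$ exists by hypothesis; well-foundedness and $<\kappa$-closure are expressible in $\bL_{\kappa,\kappa}$ and lie in the diagram, so we may take $\cM$ transitive with $\cM^{{<}\kappa}\subseteq\cM$. Forcing with the witnessing forth system produces a virtual embedding $j\colon V_\alpha\to\cM$ with $\crit j=\kappa$, since every ordinal below $\kappa$ is $\bL_{\kappa,\kappa}$-definable. The covering sentences give $j\image\lambda\subseteq s^\cM$ and $|s^\cM|^\cM<j(\kappa)$, and since $j$ is injective we get $\lambda\le|s^\cM|^\cM<j(\kappa)$, which forces $j(\kappa)>\lambda$. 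Thus $j$ witnesses virtual strong compactness at $\lambda$.

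The main obstacle is $(1)\Rightarrow(2)$: one must marry the strong-compactness covering set $s^*$ to the forcing-relation definition of the forth system, ensuring both that $j\image T$ is covered by a set in $V$ of $\cM$-size below $j(\kappa)$ and that the renaming maps $f_*$ read off $\cF$ genuinely agree with $j$ on $\cL(\dom f)$. The hypothesis $\cM^{{<}\kappa}\subseteq\cM$, needed so that $\cM$ computes $\bL_{\kappa,\kappa}$-satisfaction correctly, is the easily-overlooked ingredient that makes $\cM^\dagger=j(F)(Y)$ a legitimate pseudo-model rather than merely one in the sense of the nonstandard target.
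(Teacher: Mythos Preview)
Your overall strategy is exactly what the paper intends (the paper states the theorem with only the remark ``it is not difficult to see'' and relies on the templates you cite), and the implications $(1)\Rightarrow(2)$ and $(2)\Rightarrow(3)$ are carried out correctly. Two small comments on $(1)\Rightarrow(2)$: the hypothesis $V_\alpha^{{<}\kappa}\subseteq V_\alpha$ is not supplied by the definition of virtual strong compactness and is not used anywhere in your argument, so simply drop it; and $\alpha\geq\lambda$ (hence $F\in V_\alpha$) is implicit in the definition since $j\image\lambda$ must make sense.

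There is, however, a genuine gap in $(3)\Rightarrow(1)$ at the line ``since $j$ is injective we get $\lambda\le|s^{\cM}|^{\cM}<j(\kappa)$.'' The embedding $j$ lives only in the forcing extension obtained from the forth system, and that forcing may collapse $\lambda$; so from $j\image\lambda\subseteq s^{\cM}$ you only get $|\lambda|^{V[G]}\le |s^{\cM}|^{V[G]}$, which need not bound $|s^{\cM}|^{\cM}$ below by $\lambda$. The conclusion $j(\kappa)>\lambda$ is nonetheless correct, and there are two easy repairs. The cleanest is to mimic Theorem~\ref{ve-thm} directly: add constants $d_\xi$ for $\xi\le\lambda$ and the sentences $d_\xi<d_\eta<c_\kappa$ for $\xi<\eta\le\lambda$; this is still ${<}\kappa$-satisfiable, and in the pseudo-model the interpretations of the $d_\xi$ form a strictly increasing sequence of length $\lambda+1$ below $j(\kappa)$. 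Alternatively, keep your theory but argue via order type rather than cardinality: taking $\lambda$ to be a cardinal (which suffices), if $j(\kappa)\le\lambda$ then $|s^{\cM}|^V\le|s^{\cM}|^{\cM}<\lambda$, so $\otp(s^{\cM}\cap{\rm Ord})<\lambda$; but $j\image\lambda$ is a subset of $s^{\cM}\cap{\rm Ord}$ of order type exactly $\lambda$ (order type is absolute), a contradiction.
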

It is, of course, the case that $\kappa$ is strongly compact if and only if it is a strong compactness cardinal for $\bL_{\kappa,\omega}$. 
\begin{question}
If $\kappa$ is a $\kappa^+$-pseudo-compactness cardinal for $\bL_{\kappa,\omega}$, is it virtually strongly compact? 
\end{question}
Recall that \Vopenka's principle is the assertion that every proper class of first-order structures in the same language has two structures which elementarily embed. Analogously, \emph{virtual \Vopenka's principle} (also known in the literature as \emph{generic \Vopenka's principle}) asserts that every proper class of first-order structures in the same language has two structures which virtually elementarily embed.

Makowsky showed that Vop\v{e}nka's principle is equivalent to the assertion that every logic has a strong compactness cardinal \cite{m-vopcomp}. Bagaria showed that Vop\v{e}nka's principle is equivalent to the assertion that for every $n<\omega$, there is a $C^{(n)}$-extendible cardinal \cite{b-cndard}. The third author and Hamkins showed in  \cite{gh-genvop} that virtual \Vopenka's principle is equivalent to the assertion that for every $n<\omega$, there is a proper class of weakly virtually $C^{(n)}$-extendible cardinals, but at the same same time it is consistent that virtual \Vopenka's principle holds and yet there are no even virtually supercompact cardinals.

We will reprove Moskowsky's theorem and show that one direction generalizes to the case of virtually $C^{(n)}$-extendible cardinals, but the other direction fails to generalize because of the split in the virtual case into the weak and strong forms of $C^{(n)}$-extendibility.
\begin{theorem}[Makowsky, Bagaria]\label{th:makowsky}
For every $n<\omega$, there is a $C^{(n)}$-extendible cardinal if and only if every logic has a strong compactness cardinal.
\end{theorem}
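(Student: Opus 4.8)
The plan is to prove both directions directly, reusing the closure-point and renaming machinery from the proof of Theorem~\ref{thm:woodin-char} together with the sort-logic expressibility of $\Sigma_n$-elementarity from Proposition~\ref{prop:sort}.

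For the forward direction, fix a logic $\cL$. Since $\cL$ and $\vDash_{\cL}$ are classes, I would fix $n$ and a parameter $p$ so that both are $\Sigma_n$-definable from $p$, and then take $\kappa$ to be a $C^{(n)}$-extendible cardinal with $\kappa > \max(o(\cL), \rank p)$. I claim $\kappa$ is a strong compactness cardinal for $\cL$. Given a ${<}\kappa$-satisfiable $\cL(\tau)$-theory $T$, choose $\alpha \in C^{(n)}$ with $\tau, T \in V_\alpha$ and large enough that $V_\alpha$ contains a witnessing model for each $s \in \cP_\kappa T$; since $\alpha \in C^{(n)}$ and $\cL$ is $\Sigma_n$-definable from $p$, the structure $V_\alpha$ correctly computes $\vDash_\cL$ on its elements and hence sees that $T$ is ${<}\kappa$-satisfiable. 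Applying $C^{(n)}$-extendibility gives $j : V_\alpha \to V_\beta$ with $\crit j = \kappa$, $j(\kappa) > \alpha$, and $\beta \in C^{(n)}$. By elementarity $V_\beta$ thinks $j(T)$ is ${<}j(\kappa)$-satisfiable; since $j(\kappa)$ is inaccessible and above $\alpha$, the set $j\image T \subseteq j(T)$ has size $|T| < j(\kappa)$, so $V_\beta$ has a model $M \vDash_{j(\cL)} j\image T$. Because $\crit j = \kappa > \rank p$ we have $j(p) = p$, so $j(\cL)$ is computed in $V_\beta$ by the same $\Sigma_n$-definition as $\cL$, and $\beta \in C^{(n)}$ makes this correct in $V$. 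Pulling $M$ back along the renaming $j \restriction \tau : \tau \to j\image\tau$---which, exactly as in the footnote to Theorem~\ref{thm:woodin-char}, is a union of coherent renamings from $V_\beta$---yields a genuine $\tau$-structure modelling $T$.

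For the converse, fix $n$ and apply the hypothesis to the sort logic $\bL^{s,\Sigma_n}$, obtaining a strong compactness cardinal $\kappa$; note $o(\bL^{s,\Sigma_n}) = \omega$. For each $\alpha \in C^{(n)}$ above $\kappa$, I would build, in the language $\tau_\alpha = \{\in\} \cup \{c_x : x \in V_\alpha\} \cup \{d_\xi : \xi \le \alpha\} \cup \{c\}$, the theory $T_\alpha$ consisting of the first-order elementary diagram of $(V_\alpha, \in, c_x)_{x \in V_\alpha}$, the sort-logic sentence of Proposition~\ref{prop:sort} forcing the universe to be some $(V_\beta, \in)$ with $V_\beta \prec_{\Sigma_n} V$, the sentences $\{c \ne c_\xi : \xi < \kappa\} \cup \{c < c_\kappa\}$, and $\{d_\xi < d_\eta < c_\kappa : \xi < \eta \le \alpha\}$. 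This theory is ${<}\kappa$-satisfiable in $V_\alpha$ (only ${<}\kappa$-many $d$'s and the constant $c$ need placing below $\kappa$, and $V_\alpha \prec_{\Sigma_n} V$ handles the sort-logic sentence), so strong compactness gives a model, which the sort-logic sentence forces to collapse to some $(V_\beta, \in)$ with $\beta \in C^{(n)}$, the elementary diagram yielding an elementary $j : V_\alpha \to V_\beta$, the $c$-clauses giving $\crit j \le \kappa$, and the $d$-clauses giving $j(\crit j) > \alpha$.

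The main obstacle is that ordinary (finitary) sort logic cannot pin the critical point to be exactly $\kappa$, so the embeddings $j_\alpha : V_\alpha \to V_{\beta_\alpha}$ only come with $\crit j_\alpha = \gamma_\alpha \le \kappa$. I would finish by a pigeonhole argument: as $\alpha$ ranges over the proper class $C^{(n)} \cap (\kappa, {\rm Ord})$ there are only ${\le}\kappa^+$ possible values for $\gamma_\alpha$, so some fixed $\gamma \le \kappa$ is realized for an unbounded (indeed proper) class of $\alpha$. For any target $\alpha' \in C^{(n)}$ with $\alpha' > \gamma$, choosing $\alpha \ge \alpha'$ with $\gamma_\alpha = \gamma$ and restricting gives $j_\alpha \restriction V_{\alpha'} : V_{\alpha'} \to V_{j_\alpha(\alpha')}$, which is elementary with critical point $\gamma$ and $j_\alpha(\gamma) > \alpha \ge \alpha'$; elementarity together with $\beta_\alpha \in C^{(n)}$ gives $j_\alpha(\alpha') \in C^{(n)}$, so $\gamma$ witnesses $C^{(n)}$-extendibility. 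Hence a $C^{(n)}$-extendible cardinal exists for every $n$. The delicate points to get right are the correctness bookkeeping in the forward direction (ensuring $V_\alpha$ and $V_\beta$ compute $\cL$ and ${<}\kappa$-satisfiability faithfully, which is where $C^{(n)}$ and $\rank p < \kappa$ are used) and verifying that the restricted embedding in the converse lands in a $V_{j_\alpha(\alpha')}$ that is genuinely $\Sigma_n$-correct.
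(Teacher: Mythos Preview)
Your approach is essentially the same as the paper's in both directions, but there is one genuine slip in the converse argument.

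You write that ``the $d$-clauses giv[e] $j(\crit j) > \alpha$'', and later that the restricted embedding satisfies $j_\alpha(\gamma) > \alpha$. This is not what the $d$-clauses establish: the sentences $d_\xi < d_\eta < c_\kappa$ for $\xi < \eta \le \alpha$ force the interpretation of $c_\kappa$, which is $j_\alpha(\kappa)$, to exceed $\alpha$. Since the critical point $\gamma_\alpha$ may be strictly below $\kappa$ (as you yourself emphasize), you only get $j_\alpha(\gamma_\alpha) \le j_\alpha(\kappa)$, which is the wrong inequality. In the proof of Theorem~\ref{ve-thm} from which you borrowed the $d$-clauses, the critical point is pinned to $\kappa$ by the $\bL_{\kappa,\omega}$-definability of each $\xi < \kappa$, so there $j(\crit j) = j(\kappa)$; here you are working in finitary sort logic and that argument is unavailable.

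The fix is the one the paper uses: simply drop the $d$-clauses. Your pigeon-hole and restriction argument then shows that $\gamma$ is \emph{weakly} $C^{(n)}$-extendible (embeddings $V_{\alpha'} \to V_{j_\alpha(\alpha')}$ with $\crit = \gamma$ and $j_\alpha(\alpha') \in C^{(n)}$, but no control on $j_\alpha(\gamma)$). The paper then observes in one line that ``because we are not in the virtual case we do not need to show additionally that $j(\kappa) > \alpha$'': in $V$, Kunen's Inconsistency yields that weak $C^{(n)}$-extendibility and $C^{(n)}$-extendibility coincide (this is exactly the split that fails virtually and motivates Section~\ref{sec:virtual}). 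With that observation your argument goes through; the $d$-clauses are simply superfluous in the non-virtual setting.

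Your verification that $j_\alpha(\alpha') \in C^{(n)}$ is correct, since $\alpha \in C^{(n)}$ ensures $V_\alpha$ computes $C^{(n)} \cap \alpha$ correctly and $\beta_\alpha \in C^{(n)}$ pushes this up. The forward direction matches the paper's argument; the renaming verification that $f_*(\phi) = j(\phi)$ is spelled out in more detail in the paper (via an auxiliary $\sigma \in V_\kappa$ and the composition identity $j(g)_* = f_*^\phi \circ g_*$), but your reference to the Woodin-cardinal proof covers the same ground.
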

We will need the following easy fact about $C^{(n)}$-extendible cardinals.
\begin{prop}[\cite{b-cndard}] Suppose that for every $n<\omega$, there is a $C^{(n)}$-extendible cardinal. Then for every $n<\omega$, there is a proper class of $C^{(n)}$-extendible cardinals.
\end{prop}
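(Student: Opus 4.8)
The plan is to reflect the $\Pi_{n+2}$ assertion ``$x$ is $C^{(n)}$-extendible'' upward along an extendibility embedding, using a $C^{(n+2)}$-extendible cardinal as the driver. Fix $n$ and an arbitrary ordinal $\gamma$; since the $C^{(n)}$-extendible cardinals form a proper class as soon as they are unbounded, it suffices to produce one above $\gamma$. By the hypothesis applied at level $n+2$, fix a $C^{(n+2)}$-extendible cardinal $\kappa$; recall from \cite{b-cndard} that $C^{(n+2)}$-extendibility implies $C^{(n)}$-extendibility, so in particular $\kappa$ is $C^{(n)}$-extendible.

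First I would pin down the complexity of the target property. For fixed $n$, let $\Psi_n(x)$ be the formula ``$x$ is $C^{(n)}$-extendible'', i.e.
$$\forall\alpha\,[(\alpha>x\wedge\alpha\in C^{(n)})\rightarrow\exists\beta\,\exists j\,(\beta\in C^{(n)}\wedge j:V_\alpha\to V_\beta\text{ is elementary},\ \crit j=x,\ j(x)>\alpha)].$$
Membership in $C^{(n)}$ is $\Pi_n$ (via the $\Sigma_n$-satisfaction predicate), and ``$j:V_\alpha\to V_\beta$ is elementary with $\crit j=x$ and $j(x)>\alpha$'' is $\Sigma_1$ in the displayed parameters. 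A routine Levy-hierarchy count then gives that $\Psi_n$ is $\Pi_{n+2}$: the matrix $\Pi_n\wedge\Sigma_1$ sits inside $\Sigma_{n+1}$, the two inner existentials keep it $\Sigma_{n+1}$, the implication stays $\Sigma_{n+1}$, and the outer universal quantifier closes it to $\Pi_{n+2}$. The essential point is that $\Psi_n$ is a single first-order formula, hence transported by elementary embeddings and preserved between $V$ and any $V_\delta$ with $\delta\in C^{(n+2)}$.

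The reflection then runs in three steps. Choose $\alpha\in C^{(n+2)}$ with $\alpha>\max(\kappa,\gamma)$ and apply $C^{(n+2)}$-extendibility of $\kappa$ to obtain $j:V_\alpha\to V_\beta$ with $\crit j=\kappa$, $j(\kappa)>\alpha$, and both $\alpha,\beta\in C^{(n+2)}$. Since $V_\alpha\prec_{\Sigma_{n+2}}V$ and $\Psi_n(\kappa)$ is a $\Pi_{n+2}$ truth with parameter $\kappa\in V_\alpha$, correctness gives $V_\alpha\models\Psi_n(\kappa)$; full elementarity of $j$ yields $V_\beta\models\Psi_n(j(\kappa))$; and since $V_\beta\prec_{\Sigma_{n+2}}V$ with $j(\kappa)\in V_\beta$, the $\Pi_{n+2}$ sentence $\Psi_n(j(\kappa))$ reflects back to $V$. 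Thus $j(\kappa)$ is genuinely $C^{(n)}$-extendible in $V$ and $j(\kappa)>\alpha>\gamma$, completing the step.

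The hard part will be the bookkeeping that makes the two $\Sigma_{n+2}$-correctness passages legitimate: verifying the exact complexity $\Pi_{n+2}$ of $\Psi_n$, and, more importantly, securing witnessing levels with $\alpha,\beta\in C^{(n+2)}$ rather than merely $C^{(n)}$. The definition of $C^{(n)}$-extendibility alone supplies targets only in $C^{(n)}$, which carries only $\Pi_n$-correctness and is too weak to reflect a $\Pi_{n+2}$ property; this is precisely why the argument is driven by a $C^{(n+2)}$-extendible cardinal, and hence why the hypothesis must be invoked at every level $n$. One should also check that the computation of $\Psi_n$ internal to $V_\alpha$ and $V_\beta$ agrees with the external one, but this is automatic from the same $\Sigma_{n+2}$-elementarity.
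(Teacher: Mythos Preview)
Your proof is correct and follows essentially the same strategy as the paper's: reflect the property ``$\kappa$ is $C^{(n)}$-extendible'' along a $C^{(m)}$-extendibility embedding for sufficiently large $m$, using $\Sigma_m$-correctness of the source and target levels to pass between $V$ and the $V_\alpha$, $V_\beta$. The only differences are cosmetic---you argue directly rather than by contradiction, and you pin down $m=n+2$ via an explicit complexity count where the paper simply takes $m\gg n$.
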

\begin{proof}
Suppose towards a contradiction that for some fixed $n$, the $C^{(n)}$-extendible cardinals are bounded by $\delta$. Let $\kappa$ be a $C^{(m)}$-extendible cardinal for some $m\gg n$. Obviously $\kappa<\delta$. Fix $\alpha>\delta$ in $C^{(m)}$ and take an elementary embedding $j:V_\alpha\to V_\beta$ with $\crit j=\kappa$, $j(\kappa)>\alpha$ and $\beta\in C^{(m)}$. By elementarity, $V_\alpha$ sees that $\kappa$ is $C^{(n)}$-extendible. It follows that $V_\beta$ thinks that $j(\kappa)$ is $C^{(n)}$-extendible and it must be correct about this by the level of elemenarity. But $j(\kappa)>\alpha>\delta$, which is the desired contradiction.
\end{proof}
\noindent Clearly the argument would work identically for virtually $C^{(n)}$-extendible cardinals as well, but not for weakly virtually $C^{(n)}$-extendible cardinals.
\begin{proof}[Proof of Theorem~\ref{th:makowsky}]
Fix a logic $\cL$ with occurrence number $o(\cL)=\delta$. The logic $\cL$ and satisfaction relation $\vDash_{\cL}$ are defined by some $\Sigma_n$-formulas with a parameter $a$ of rank $\delta_a$. Clearly there are only set-many languages of size less than $\delta_a$ modulo a renaming. So let us fix an ordinal $\delta>\delta_a$ such that $V_{\delta}$ is closed under $\cL$ and any language $\tau$ of size less than $\delta_a$ has a renaming to a language $\tau'\in V_{\delta}$. Let $\kappa>\delta$ be $C^{(n)}$-extendible. We will argue that $\kappa$ is a strong compactness cardinal for $\cL$. Let $T$ be a ${<}\kappa$-satisfiable $\cL(\tau)$-theory. Choose an elementary embedding $j:V_\alpha\to V_\beta$ with $\alpha,\beta\in C^{(n)}$, $\crit j=\kappa$ and $j(\kappa)>\alpha$ such that $V_\alpha$ sees that $T$ is ${<}\kappa$-satisfiable. Note that both $V_\alpha$ and $V_\beta$ are correct about $\cL$ and $\vDash_{\cL}$ because they are $\Sigma_n$-elementary in $V$.

By elementarity $V_\beta$ satisfies that $j\image T$ has a model $M$ in the logic $\cL(j\image\tau)$, and since $V_\beta\prec_{\Sigma_n}V$, it must be correct about this. Let $f:\tau\to j\image\tau$ be the renaming taking elements of $\tau$ to their images under $j$. Under the renaming $M$ is a $\tau$-structure, and it suffices to show that $f_*(\phi)=j(\phi)$ for every $\phi\in T$. Fix $\phi\in T$ and let $\tau_\phi$ be the ${<}\delta$-sized subset of $\tau$ used in $\phi$. Let $f^\phi=f\restriction\tau_\phi$. By our assumptions, there is a language $\sigma\in V_\kappa$ with $\cL(\sigma)\in V_\kappa$ and a renaming $g:\sigma\to \tau_\phi$. Let $g_*(\bar\phi)=\phi$. By elementarity, $j(g):\sigma\to j(\tau_\phi)$ is a renaming from $\sigma$ to $j(\tau_\phi)=j\image\tau_\phi$. It is easy to check that $j(g)=f^\phi\circ g$. Now, by elementarity, we have $(j(g))_*(\bar\phi)=j(g_*)(\bar\phi)=j(g_*(\bar\phi))=j(\phi)$. By our observations from Section~\ref{sec:prelim} we have, $j(g)_*=(f^\phi\circ g)_*=f^\phi_*\circ g_*$. Thus, we have $f^\phi_*(\phi)=f^\phi_*(g_*(\bar \phi))=j(g)_*(\bar\phi)=j(\phi)$. Again, by our observations from Section~\ref{sec:prelim} about restrictions of renamings, we have $f_*(\phi)=j(\phi)$, but in fact it already suffices to know that $f^\phi_*(\phi)=j(\phi)$ since $V_\beta$ is correct about satisfaction for $\cM$.


In the other direction, we will argue that if there is a strong compactness cardinal for the sort logic $\bL^{s,\Sigma_n}$, then there  must be a $C^{(n)}$-extendible cardinal. So suppose that $\gamma$ is a strong compactness cardinal for $\bL^{s,\Sigma_n}$. Fix $\alpha>\gamma$ in $C^{(n)}$. We can write the usual theory whose model gives an elementary embedding $j:V_\alpha\to V_\beta$ with $\crit j=\kappa_\alpha\leq\gamma$, and using sort logic, by Proposition~\ref{prop:sort}, we can express that $\beta\in C^{(n)}$. Since there are only boundedly many $\kappa\leq\gamma$, by the pigeon-hole principle we can choose some $\kappa_{\alpha^*}$ which works for unboundedly many $\alpha$, and this $\kappa=\kappa_{\alpha^*}$ must be $C^{(n)}$-extendible. Note that because we are not in the virtual case we do not need to show additionally that $j(\kappa)>\alpha$.
\end{proof}

The proof above gives us the following results for the virtual case.
\begin{theorem} If for every $n<\omega$, there is a virtually $C^{(n)}$-extendible cardinal, then every logic has a $\kappa^+$-pseudo-compactness cardinal $\kappa$.
\end{theorem}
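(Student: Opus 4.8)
The plan is to transplant the forward direction of Makowsky's Theorem~\ref{th:makowsky} into a forcing extension and then extract a pseudo-model back in $V$ by the device used in the proof of Theorem~\ref{ve-thm}. First I would set up as in Theorem~\ref{th:makowsky}: fix a logic $\cL$, let $n$ be large enough that both $\cL$ and $\vDash_\cL$ are $\Sigma_n$-definable with a parameter of some fixed rank, and choose an ordinal $\delta$ above $o(\cL)$ and the rank of that parameter which is a closure point of $\cL$ and for which every language of size ${<}o(\cL)$ admits a renaming into $V_\delta$. Since the hypothesis together with the virtual analogue of the Proposition preceding Theorem~\ref{th:makowsky} yields a proper class of virtually $C^{(n)}$-extendible cardinals, I may fix such a $\kappa>\delta$, and the goal becomes to show that $\kappa$ is a $\kappa^+$-pseudo-compactness cardinal for $\cL$.

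Given a ${<}\kappa$-satisfiable $\cL(\tau)$-theory $T$, I choose a function $F:\cP_\kappa T\to\Str\tau$ with $F(s)\vDash s$ for all $s$, and pick $\alpha\in C^{(n)}$ with $\alpha>\delta$ and $F,T,\tau\in V_\alpha$; as $V_\alpha\prec_{\Sigma_n}V$, it computes $\cL$ and $\vDash_\cL$ correctly. By virtual $C^{(n)}$-extendibility (via the absoluteness Corollary) I pass to a ${\rm Coll}(\omega,V_\alpha)$-extension $V[G]$ carrying an elementary embedding $j:V_\alpha\to V_\beta$ with $\beta\in C^{(n)}$, $\crit j=\kappa$ and $j(\kappa)>\alpha$. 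Since $\kappa$ is inaccessible, $V_\beta$ sees $j(\kappa)$ as inaccessible, so $|V_\alpha|<j(\kappa)$; as the forcing has size $|V_\alpha|$, collecting every value forced for $\dot j(\check\phi)$ with $\phi\in T$ gives a set $Y\in V$ with $j\image T\subseteq Y\subseteq j(T)$ and $|Y|<j(\kappa)$. I then set $\cM=j(F)(Y)\in V_\beta\subseteq V$; by elementarity $V_\beta$ sees that $j(F)(Y)$ models $Y$ in $j(\cL)$, and since $V_\beta\in C^{(n)}$ computes $\cL$ correctly, this gives $\cM\vDash_\cL Y$ in the genuine logic.

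It remains to produce a $\kappa^+$-forth system $\cF$ from $\tau$ to $\tau^*=j(\tau)$ witnessing that $\cM$ is a pseudo-model of $T$. Exactly as in Theorem~\ref{ve-thm}, I let $\cF$ consist of all renamings $f:\sigma\to\sigma^*$ with $\sigma\in\cP_{\kappa^+}\tau$ and $\sigma^*\in\cP_{\kappa^+}\tau^*$ for which some condition $p$ forces $\check f=\dot j\restriction\check\sigma:\check\sigma\to\dot j\image\check\sigma$; non-emptiness comes from Observation~\ref{obs:kunen} and the extension property from a density argument over ${\rm Coll}(\omega,V_\alpha)$. To finish I must check $\cM\vDash_\cL f_*\image\bigl(T\cap\cL(\dom f)\bigr)$ for each such $f$, and here I would import Makowsky's renaming identity: for $\phi\in T\cap\cL(\sigma)$, factor $\phi$ through a small language $\sigma_0$ with $\cL(\sigma_0)\in V_\kappa$ (available since $\kappa>\delta$) via a renaming $g$, and use the coherence relations $j(g)_*=f_*\circ g_*$ to conclude $f_*(\phi)=j(\phi)$ for the embedding $j$ associated to $f$. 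Then $f_*\image(T\cap\cL(\sigma))=j\image(T\cap\cL(\sigma))\subseteq Y$, and since $\cM\vDash_\cL Y$ we are done.

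The main obstacle is securing this identity $f_*(\phi)=j(\phi)$ across the forcing boundary. The map $f_*$ is the renaming bijection induced by $\cL$ as computed in $V$, whereas $j(\phi)$ is the raw image under an embedding living only in $V[G]$, and the full map $j\restriction\tau$ need not belong to $V$; the argument must therefore localize to fragments $\sigma$ of size ${\leq}\kappa$, where Observation~\ref{obs:kunen} returns $j\restriction\sigma$ to $V$, and must exploit that $\cL$ is interpreted \emph{de re} so that the renaming bijection $\cL(\sigma_0)\to\cL(\sigma)$ agrees whether computed in $V$ or against the virtual embedding. A related point is arranging $Y$ so that it covers $j'\image T$ for every embedding $j'$ admitted into $\cF$, not merely for the fixed $j$ used to build $\cM$; taking $Y$ to be the set of all conditionally forced values of $\dot j(\check\phi)$ is what makes $\cM$ simultaneously model all the renamed fragments.
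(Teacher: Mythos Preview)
Your proposal is correct and follows the same approach as the paper: combine the covering argument from Theorem~\ref{ve-thm} with the renaming identity $f_*(\phi)=j(\phi)$ from Theorem~\ref{th:makowsky}, using that $V_\beta\prec_{\Sigma_n}V$ computes $\cL$ correctly. The paper's own proof is terse---it just refers back to those two proofs and notes the one new point, that $j\image T$ must be covered by a ${<}j(\kappa)$-sized set in $V$---so your version is really an expanded form of the same argument.

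You are right to flag the uniformity issue in your last paragraph: the forth system $\cF$ is defined via the forcing relation, so different $f\in\cF$ are witnessed by different conditions and hence correspond to possibly different embeddings $j'$, while $\cM=j(F)(Y)$ is built from a single $j$. Your fix of taking $Y$ to be the set of \emph{all} conditionally forced values of $\dot j(\check\phi)$ is the right idea, but as stated it does not quite give $Y\subseteq j(T)$ (different conditions may force different values of $\dot j(\check T)$). The clean repair is to first fix a single condition $p_0$ deciding $\dot j(\check T)$, $\dot j(\check F)$, $\dot j(\check\tau)$, and $\dot j(\check\kappa)$, and then restrict both $Y$ and $\cF$ to conditions below $p_0$; since the cone below $p_0$ in ${\rm Coll}(\omega,V_\alpha)$ is again a copy of the collapse, nothing is lost, and now $Y\subseteq T^*=j(T)$ and $\cM=F^*(Y)$ are unambiguous. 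With that tweak your verification that $\cM\vDash_\cL f_*\image(T\cap\cL(\dom f))$ for every $f\in\cF$ goes through exactly as you outline.
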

\begin{proof}
Following the proof of Theorem~\ref{th:makowsky} for the forward direction, it suffices to observe that even though $j\image T$ may not be in $V_\beta$, we can cover it by a theory $T^*\in V_\beta$ of size less than $j(\kappa)$.
\end{proof}

The first author showed in \cite{b-mtlc} that a cardinal $\kappa$ is $C^{(n)}$-extendible if and only if $\kappa$ is a strong compactness cardinal for $\bL^{s,\Sigma_n}_{\kappa,\omega}$. This result can be reformulated in the pseudo-compactness framework.
\begin{theorem}
A cardinal $\kappa$ is virtually $C^{(n)}$-extendible if and only if $\kappa$ is a $\kappa^+$-pseudo-compactness cardinal for $\bL^{s,\Sigma_n}_{\kappa,\omega}$.
\end{theorem}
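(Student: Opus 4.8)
The plan is to run the virtual/pseudo-model machinery of Theorem~\ref{ve-thm} with the second-order logic replaced by the sort logic $\bL^{s,\Sigma_n}_{\kappa,\omega}$, using membership of source and target in $C^{(n)}$ to guarantee that $\bL^{s,\Sigma_n}$-satisfaction is computed correctly. This is the virtual analogue of the characterization, recalled just above, of $C^{(n)}$-extendible cardinals as the strong compactness cardinals of $\bL^{s,\Sigma_n}_{\kappa,\omega}$, so both directions should parallel the corresponding directions of Theorem~\ref{ve-thm}, with the $C^{(n)}$ bookkeeping handled exactly as in Theorem~\ref{th:makowsky}. Throughout I use that a virtually $C^{(n)}$-extendible $\kappa$ is inaccessible, hence regular, and that elements of $C^{(n)}$ are $\beth$-fixed points for $n\geq 1$, so that $|V_\alpha|=\alpha$.

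For $(1)\Rightarrow(2)$, fix a ${<}\kappa$-satisfiable $\bL^{s,\Sigma_n}_{\kappa,\omega}(\tau)$-theory $T$ and a map $F:\cP_\kappa T\to\Str\tau$ with $F(s)\vDash s$. Choose $\alpha\in C^{(n)}$ large enough that $V_\alpha$ contains $F$, $T$, $\tau$; since $\alpha\in C^{(n)}$ and $\bL^{s,\Sigma_n}$-satisfaction is $\Sigma_n$-definable, $V_\alpha$ is correct that $F(s)\vDash s$. Virtual $C^{(n)}$-extendibility, via the collapse form of the absoluteness corollary, yields in $V[G]$ for $G\subseteq{\rm Coll}(\omega,V_\alpha)$ an elementary $j:V_\alpha\to V_\beta$ with $\crit j=\kappa$, $j(\kappa)>\alpha$ and $\beta\in C^{(n)}$. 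As $|V_\alpha|=\alpha<j(\kappa)$ the forcing has size $<j(\kappa)$, so I can cover $j\image T$ by a set $Y\in V$ with $|Y|<j(\kappa)$ and put $\cM=j(F)(Y)$; by elementarity $V_\beta$ sees that $\cM\vDash Y\supseteq j\image T$, and since $\beta\in C^{(n)}$ this transfers to $V$. The forth system $\cF$ is defined in $V$ from a name $\dot j$ via the forcing relation, exactly as in Theorem~\ref{ve-thm}, and is nonempty (Observation~\ref{obs:kunen}) with the extension property. The one genuinely new point is the renaming identity: for $f\in\cF$ with $\dom f=\sigma\supseteq\tau_\phi$ and $\phi\in T\cap\bL^{s,\Sigma_n}_{\kappa,\omega}(\sigma)$, we have $f_*(\phi)=j(\phi)$, because $\bL^{s,\Sigma_n}_{\kappa,\omega}$ is a concrete logic and $\phi$ mentions only $<\kappa=\crit j$ symbols, so $j(\phi)$ is literally the symbol-by-symbol renaming of $\phi$ along $f$. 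Here the delicate closure-point argument of Theorem~\ref{th:makowsky} becomes unnecessary. Hence $\cM$ models the relevant renamed theories and is the required $\kappa^+$-pseudo-model, the verification otherwise being as in Theorem~\ref{ve-thm}.

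For $(2)\Rightarrow(1)$, fix $\alpha\in C^{(n)}$ with $\alpha>\kappa$ and build the theory $T$ of Theorem~\ref{ve-thm}, where $\tau$ has $\in$, constants $\{c_x\mid x\in V_\alpha\}$, $\{d_\xi\mid\xi\leq\alpha\}$ and $c$: the $\bL_{\kappa,\omega}$-elementary diagram of $(V_\alpha,\in,c_x)_{x\in V_\alpha}$, the sentences $\{c_\xi\neq c<c_\kappa\mid\xi<\kappa\}$ and $\{d_\xi<d_\eta<c_\kappa\mid\xi<\eta\leq\alpha\}$, and---crucially replacing Magidor's $\Phi$---the sort-logic sentence $\Phi^*$ of Proposition~\ref{prop:sort} asserting that the model is some $(V_\beta,\in)$ with $V_\beta\prec_{\Sigma_n}V$. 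The theory is ${<}\kappa$-satisfiable in $V_\alpha$: any ${<}\kappa$-sized piece mentions $<\kappa$ of the $d_\xi$ and $c_\xi\neq c$ assertions, realizable below $\kappa$ by regularity of $\kappa$, and $V_\alpha\vDash\Phi^*$ precisely because $\alpha\in C^{(n)}$. Applying $(2)$ gives a $\kappa^+$-pseudo-model $\cM$ with forth system $\cF$; $\Phi^*$ forces $\cM$ to be well-founded and, after transitive collapse, isomorphic to some $V_\beta$ with $\beta\in C^{(n)}$. Forcing with $\cF$ under inclusion produces a generic bijection $f:\tau\to\tau^*$ and thence a virtual $\bL_{\kappa,\omega}$-elementary, hence elementary, embedding $j:V_\alpha\to V_\beta$; the $c_\xi\neq c<c_\kappa$ clause gives $\crit j=\kappa$ (ordinals below $\kappa$ being $\bL_{\kappa,\omega}$-definable) and the $d_\xi$ clause gives $j(\kappa)>\alpha$. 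Since $\beta\in C^{(n)}$, this witnesses virtual $C^{(n)}$-extendibility at the instance $\alpha$.

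The main obstacle is not in either skeleton---those are dictated by Theorem~\ref{ve-thm}---but in the correct handling of sort-logic satisfaction. In the forward direction one must be sure that $V_\beta$ genuinely computes $\bL^{s,\Sigma_n}_{\kappa,\omega}$-satisfaction, so that $\cM\vDash j\image T$ transfers to $V$, and in the backward direction that $\Phi^*$ really pins down a target in $C^{(n)}$; both rest on $\alpha,\beta\in C^{(n)}$ together with the $\Sigma_n$-definability of the logic and its satisfaction relation, modulo the usual constant shift in complexity. A secondary point worth flagging is why pseudo-compactness, rather than its chain version, is needed: the block $\{d_\xi<d_\eta<c_\kappa\}$ forcing $j(\kappa)>\alpha$ cannot be written as an increasing $\kappa$-union of satisfiable theories, exactly the phenomenon that separates Theorem~\ref{ve-thm} from Theorem~\ref{wve-thm}, so dropping it would yield only weak virtual $C^{(n)}$-extendibility.
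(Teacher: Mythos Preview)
Your proposal is correct and takes essentially the same approach as the paper, which merely points to the proof of Theorem~\ref{th:makowsky} for the forward direction (adapted to pseudo-models via the covering trick of Theorem~\ref{ve-thm}) and notes that $\bL_{\kappa,\omega}$ pins down the critical point for the backward direction. You have filled in exactly the details implied by that sketch, including the $d_\xi$ constants (needed in the virtual case to force $j(\kappa)>\alpha$, since Kunen's inconsistency is unavailable) and the observation that the renaming argument simplifies for a concrete logic.
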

\begin{proof}
The forward direction follows from the proof of Theorem~\ref{th:makowsky} and the backward direction follows because in the infinitary logic $\bL_{\kappa,\omega}$ we can express that the critical point of our embedding is exactly $\kappa$.
\end{proof}

\noindent Indeed, as in previous arguments, we do not need $\kappa^+$-pseudo models in the argument; having pseudo-models suffices to obtain the desired virtual elementary embeddings.
\begin{cor} The following are equivalent.
\begin{enumerate}
\item For every $n<\omega$, there is a virtually $C^{(n)}$-extendible cardinal.
\item Every logic has a $\kappa^+$-pseudo-compactness cardinal.
\item Every logic has an $\omega$-pseudo-compactness cardinal.
\end{enumerate}
\end{cor}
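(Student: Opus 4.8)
The plan is to prove the cycle $(1)\Rightarrow(2)\Rightarrow(3)\Rightarrow(1)$, with the last implication carrying essentially all the content. The implication $(1)\Rightarrow(2)$ requires nothing new: it is exactly the statement of the theorem immediately preceding this corollary.

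For $(2)\Rightarrow(3)$ I would establish the purely combinatorial fact that a $\kappa^+$-pseudo-model is automatically an $\omega$-pseudo-model, so that any $\kappa^+$-pseudo-compactness cardinal for a logic $\cL$ is also an $\omega$-pseudo-compactness cardinal for $\cL$. Given a ${<}\kappa$-satisfiable $\cL(\tau)$-theory $T$ with a $\kappa^+$-pseudo-model $\cM$ witnessed by a $\kappa^+$-forth system $\cF$, set $\cF'=\{\,g\restriction\sigma_0 : g\in\cF,\ \sigma_0\in\cP_\omega(\dom g)\,\}$. Using the extension property of $\cF$ (extend $g$ past a given finite $\tau_1\in\cP_\omega\tau$, then restrict to $\sigma_0\cup\tau_1$) one checks that $\cF'$ is an $\omega$-forth system, and using the restriction compatibility $(g\restriction\sigma_0)_*=g_*\restriction\cL(\sigma_0)$ of renamings noted in Section~\ref{sec:prelim} together with $T\cap\cL(\sigma_0)\subseteq T\cap\cL(\dom g)$, one sees that $\cM$ models $f_*\image(T\cap\cL(\dom f))$ for every $f\in\cF'$. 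Hence $\cM$ is an $\omega$-pseudo-model of $T$, and $(3)$ follows.

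The heart of the matter is $(3)\Rightarrow(1)$, which I would prove by adapting the backward direction of Theorem~\ref{th:makowsky} to the virtual/pseudo setting, exactly as the remark preceding the corollary suggests. Fix $n$ and, using $(3)$, let $\gamma$ be an $\omega$-pseudo-compactness cardinal for the sort logic $\bL^{s,\Sigma_n}$. For each $\alpha\in C^{(n)}$ with $\alpha>\gamma$, write the theory $T_\alpha$ asserting, via the $\bL^{s,\Sigma_n}$-elementary diagram of $(V_\alpha,\in,c_x)_{x\in V_\alpha}$, Magidor's $\Phi$, and the sort-logic expression of ``$\beta\in C^{(n)}$'' furnished by Proposition~\ref{prop:sort}, that there is an embedding $j\colon V_\alpha\to V_\beta$ with $\beta\in C^{(n)}$ together with a moved ordinal below $\gamma$ (a constant $c$ with $c<c_\gamma$ and $c\neq c_\xi$ for every $\xi<\gamma$). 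The theory $T_\alpha$ is ${<}\gamma$-satisfiable, witnessed by $V_\alpha$ itself, so by $(3)$ it has an $\omega$-pseudo-model; collapsing via $\Phi$ and forcing with the witnessing forth system yields, as in previous arguments and by the remark that $\omega$-pseudo-models suffice, a virtual elementary embedding $j\colon V_\alpha\to V_\beta$ with $\crit j=\kappa_\alpha\leq\gamma$ and $\beta\in C^{(n)}$. Since only boundedly many critical points $\leq\gamma$ are available, the pigeonhole principle fixes a single $\kappa$ serving for a proper class of $\alpha\in C^{(n)}$.

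The main obstacle is precisely the point flagged in the closing note of Theorem~\ref{th:makowsky}: because Kunen's Inconsistency fails virtually, the bare embeddings produced above yield only that $\kappa$ is \emph{weakly} virtually $C^{(n)}$-extendible, whereas $(1)$ demands the strong form with $j(\kappa)>\alpha$. To force $j(\kappa)>\alpha$ one must pin the critical point exactly at $\kappa$ and then append the constants $d_\xi$ with $d_\xi<d_\eta<c_\kappa$ as in Theorem~\ref{ve-thm}; but pinning an infinite critical point is impossible in the finitary sort logic and requires passing to $\bL^{s,\Sigma_n}_{\kappa,\omega}$, in which every ordinal below $\kappa$ is definable, exactly as in the backward direction of the characterization theorem for virtually $C^{(n)}$-extendible cardinals. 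Reconciling this with $(3)$ — which supplies a compactness cardinal for each fixed logic, but not transparently one that is a compactness cardinal for the infinitary logic indexed by itself — is the delicate step. I would handle it by a reflection/fixed-point argument locating a $\kappa$ that is an $\omega$-pseudo-compactness cardinal for its own $\bL^{s,\Sigma_n}_{\kappa,\omega}$, and then invoking the characterization theorem together with the remark to conclude that this $\kappa$ is virtually $C^{(n)}$-extendible.
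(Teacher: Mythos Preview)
Your treatment of $(1)\Rightarrow(2)$ and $(2)\Rightarrow(3)$ matches the paper exactly: the first is the preceding theorem verbatim, and the second is the trivial restriction-of-forth-systems argument you give. The paper presents the whole statement as an unargued corollary of the preceding theorem, the characterization ``$\kappa$ is virtually $C^{(n)}$-extendible iff $\kappa$ is a $\kappa^+$-pseudo-compactness cardinal for $\bL^{s,\Sigma_n}_{\kappa,\omega}$'', and the remark that $\omega$-pseudo-models suffice. So structurally you and the paper agree.

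Where your proposal has a genuine gap is exactly where you flag it yourself. You correctly observe that running the backward direction of Theorem~\ref{th:makowsky} virtually, using only an $\omega$-pseudo-compactness cardinal $\gamma$ for the finitary sort logic $\bL^{s,\Sigma_n}$, produces for each $\alpha\in C^{(n)}$ a virtual embedding $j:V_\alpha\to V_\beta$ with $\crit j=\kappa_\alpha\le\gamma$ and $\beta\in C^{(n)}$; pigeonhole then yields only a \emph{weakly} virtually $C^{(n)}$-extendible $\kappa$, and the paper itself warns (last line of the proof of Theorem~\ref{th:makowsky}) that in the virtual setting one cannot for free upgrade to $j(\kappa)>\alpha$. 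Your fix is to pass to $\bL^{s,\Sigma_n}_{\kappa,\omega}$ and invoke the characterization theorem---but that requires a $\kappa$ which is an $\omega$-pseudo-compactness cardinal for the logic indexed by $\kappa$ itself, and your ``reflection/fixed-point argument locating such a $\kappa$'' is asserted, not carried out. This is not a technicality: hypothesis~(3) hands you, for each fixed logic, \emph{some} compactness cardinal, and it is not evident that iterating $\gamma\mapsto(\text{least compactness cardinal for }\bL^{s,\Sigma_n}_{\gamma^+,\omega})$ ever reaches a fixed point. Note also that even adjoining the constants $d_\xi<c_\gamma$ only secures $j(\gamma)>\alpha$, not $j(\kappa_\alpha)>\alpha$, so that variation of the theory does not close the gap either. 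The paper's corollary is equally laconic here; the point is simply that you should not regard this step as routine---it is precisely the content that separates $(1)$ from virtual \Vopenka's principle, and your sketch does not yet supply it.
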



\begin{cor}
Virtual \Vopenka's principle is not equivalent to the assertion that every logic has a  $\kappa^+$-pseudo-compactness cardinal $\kappa$.
\end{cor}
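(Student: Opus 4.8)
The plan is to exploit the split between the weak and strong forms of virtual $C^{(n)}$-extendibility, which is exactly the phenomenon flagged just before Theorem~\ref{th:makowsky}. By the preceding corollary, the assertion that every logic has a $\kappa^+$-pseudo-compactness cardinal is equivalent to the existence, for every $n<\omega$, of a \emph{virtually $C^{(n)}$-extendible} cardinal (the \emph{strong} form, carrying the requirement $j(\kappa)>\alpha$). On the other hand, the Gitman--Hamkins theorem \cite{gh-genvop} characterizes virtual \Vopenka's principle as the existence, for every $n<\omega$, of a proper class of \emph{weakly} virtually $C^{(n)}$-extendible cardinals. So to separate the two principles it suffices to produce a model of virtual \Vopenka's principle in which, for some $n$, there is no virtually $C^{(n)}$-extendible cardinal.

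First I would invoke the further consistency result of \cite{gh-genvop}, already quoted in Section~\ref{sec:virtual}, that virtual \Vopenka's principle is consistent with the nonexistence of (even) virtually supercompact cardinals. Next I would record the easy hierarchy fact that a virtually $C^{(n)}$-extendible cardinal (for any $n\geq 1$) is in particular virtually extendible: given $\alpha>\kappa$, choose $\alpha'>\alpha$ in $C^{(n)}$, apply virtual $C^{(n)}$-extendibility to obtain a virtual elementary embedding $j:V_{\alpha'}\to V_\beta$ with $\crit j=\kappa$ and $j(\kappa)>\alpha'>\alpha$, and restrict to $V_\alpha$. Since virtually extendible cardinals are virtually supercompact (mirroring the classical implication that extendibility yields supercompactness, and consistent with the ``not even virtually supercompact'' phrasing of the Gitman--Hamkins model), the existence of a virtually $C^{(1)}$-extendible cardinal already entails the existence of a virtually supercompact cardinal.

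Combining these, in the Gitman--Hamkins model where virtual \Vopenka's principle holds but there are no virtually supercompact cardinals, there can be no virtually $C^{(1)}$-extendible cardinal. Hence the assertion that for every $n<\omega$ there is a virtually $C^{(n)}$-extendible cardinal fails in that model, and so by the corollary the assertion that every logic has a $\kappa^+$-pseudo-compactness cardinal also fails there, even though virtual \Vopenka's principle holds. This exhibits a model separating the two principles, establishing that they are not equivalent.

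The main obstacle is the verification that virtually $C^{(n)}$-extendibility implies virtual supercompactness. The delicate point is that Kunen's inconsistency fails in the virtual setting, so one cannot simply transcribe the classical argument; care is needed that restricting the witnessing embeddings to a smaller rank initial segment preserves the critical point and the strong requirement $j(\kappa)>\alpha$. Everything else reduces to citing the corollary together with the equivalence and consistency results of \cite{gh-genvop}.
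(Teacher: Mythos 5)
Your proposal is correct and follows essentially the same route the paper intends: the preceding corollary reduces the pseudo-compactness principle to the existence of virtually $C^{(n)}$-extendible cardinals for all $n$, and the Gitman--Hamkins model of virtual \Vopenka's principle with no virtually supercompact cardinals (together with the fact that virtually $C^{(n)}$-extendible implies virtually extendible implies virtually supercompact, the last step being immediate from the quoted characterization of virtual supercompactness via transitive targets $\cM$ with $V_\lambda\subseteq\cM$) separates the two statements. Your caution about Kunen's inconsistency is unnecessary here, since that characterization makes the implication a triviality, but the argument as written is sound and matches the paper's.
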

We do, however, get a characterization of virtual Vopenka's principle in terms of chain compactness.
\begin{theorem} The following are equivalent.
\begin{enumerate}
\item Virtual \Vopenka's principle.
\item Every logic has a $\kappa^+$-pseudo-chain compactness cardinal $\kappa$.
\item Every logic has an $\omega$-pseudo-chain compactness cardinal $\kappa$.
\end{enumerate}
\end{theorem}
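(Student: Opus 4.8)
The plan is to establish the cycle $(1)\Rightarrow(2)\Rightarrow(3)\Rightarrow(1)$, using throughout the Gitman--Hamkins equivalence \cite{gh-genvop} that virtual \Vopenka's principle holds if and only if for every $n<\omega$ there is a proper class of weakly virtually $C^{(n)}$-extendible cardinals. The implication $(2)\Rightarrow(3)$ is the routine ``shrinking'' direction: if $\cM$ is a $\kappa^+$-pseudo-model of a theory $T$ via a $\kappa^+$-forth system $\cF$, then the collection of all finite restrictions $f\restriction\rho$ of members of $\cF$ is an $\omega$-forth system witnessing that $\cM$ is also an $\omega$-pseudo-model; the extension property for finite pieces follows by first extending inside $\cF$ and then restricting, and $\cM$ still models each $f'_*\image(T\cap\cL(\dom f'))$ because $f'_*$ is a restriction of the corresponding $f_*$. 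Thus the same $\kappa$ works for both, and the two substantive directions are $(1)\Rightarrow(2)$ and $(3)\Rightarrow(1)$.

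For $(1)\Rightarrow(2)$, fix a logic $\cL$ whose satisfaction relation is $\Sigma_n$-definable. Since the closure points of $\cL$ form a club while, by hypothesis and Gitman--Hamkins, the weakly virtually $C^{(n)}$-extendible cardinals form a proper class, I would choose $\kappa$ that is simultaneously a closure point of $\cL$, weakly virtually $C^{(n)}$-extendible, and above all parameters defining $\cL$. To see $\kappa$ is a $\kappa^+$-pseudo-chain compactness cardinal, let $T=\bigcup_{\eta<\kappa}T_\eta$ be an increasing chain of satisfiable $\cL(\tau)$-theories, fix a function assigning a model to each $T_\eta$, and fix $\alpha\in C^{(n)}$ with $V_\alpha$ containing and witnessing all of this. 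Weak virtual $C^{(n)}$-extendibility supplies, in a forcing extension, an elementary embedding $j:V_\alpha\to V_\beta$ with $\crit j=\kappa$ and $\beta\in C^{(n)}$. By elementarity $j(\vec T)(\kappa)$ is satisfiable in the sense of $V_\beta$ and contains $j\image T$; a model of it inside $V_\beta$ is the required $\kappa^+$-pseudo-model, exactly as in the converse directions of Theorems~\ref{wve-thm} and \ref{th:makowsky}. Here the forth system is the collection of $\kappa$-sized pieces of $j$ (defined via the forcing relation as in Theorem~\ref{ve-thm}), and the renaming computation $f_*(\phi)=j(\phi)$ from the proof of Theorem~\ref{th:makowsky}---which uses that $\kappa$ is a closure point and that $V_\beta\prec_{\Sigma_n}V$ is correct about $\vDash_\cL$---guarantees that the model satisfies $f_*\image(T\cap\cL(\dom f))$.

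For $(3)\Rightarrow(1)$, I would produce, for each $n<\omega$ and each ordinal $\theta$, a weakly virtually $C^{(n)}$-extendible cardinal above $\theta$, which gives virtual \Vopenka's principle by Gitman--Hamkins. Consider the logic $\cL_{n,\theta}=\bL^{s,\Sigma_n}\cup\bL_{\theta^+,\omega}$ and let $\kappa$ be an $\omega$-pseudo-chain compactness cardinal for it. For each $\alpha\in C^{(n)}$ above $\kappa$, form the chain-filtered theory of the proof of Theorem~\ref{wve-thm} (elementary diagram of $(V_\alpha,\in,c_x)_{x\in V_\alpha}$, the filtration $\{c_\xi\neq c<c_\kappa\mid\xi<\eta\}$, and Magidor's $\Phi$) taken in $\cL_{n,\theta}$, additionally using Proposition~\ref{prop:sort} to demand that the model is some $V_\beta$ with $\beta\in C^{(n)}$. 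An $\omega$-pseudo-model yields a virtual embedding $j:V_\alpha\to V_\beta$ with $\beta\in C^{(n)}$; the $\bL_{\theta^+,\omega}$-definability of every ordinal below $\theta^+$ forces $\crit j>\theta$, while the constant $c$ forces $\crit j\le\kappa$ (so $\kappa>\theta$ necessarily). Because only chain compactness is available, the $d_\xi$-constants that would force $j(\crit j)>\alpha$ cannot be filtered into a chain of length $\kappa$, and I obtain only the \emph{weak} form of $C^{(n)}$-extendibility. As the critical points lie in the set $(\theta,\kappa]$ while $\alpha$ ranges over a proper class, the pigeonhole principle fixes a single $\kappa^*\in(\theta,\kappa]$ working for unboundedly many $\alpha\in C^{(n)}$; restricting these embeddings to smaller $V_{\alpha_0}$ (and using that $j(\alpha_0)\in C^{(n)}$) shows that $\kappa^*>\theta$ is weakly virtually $C^{(n)}$-extendible.

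The main obstacle is precisely this last step. Unlike the original \Vopenka/Bagaria setting, where a single $C^{(n)}$-extendible cardinal upgrades to a proper class, the analogous upgrade fails for the weak virtual notion (as noted after Theorem~\ref{th:makowsky}), so I cannot extract the needed proper class from one compactness cardinal. Instead I must invoke the hypothesis separately for each $\theta$ via the tailored logics $\cL_{n,\theta}$, arranging that the witnessing critical points are confined to a set-sized interval $(\theta,\kappa]$ so that the pigeonhole argument applies. The conceptual crux is verifying that $\omega$-pseudo-\emph{chain} compactness (rather than full $\omega$-pseudo-compactness) delivers exactly the weak---and not the strong---form of virtual $C^{(n)}$-extendibility, which is what matches virtual \Vopenka's principle and explains why the characterization comes out in terms of chain compactness.
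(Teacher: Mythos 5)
Your proof follows the paper's route: both substantive directions go through the Gitman--Hamkins equivalence with proper classes of weakly virtually $C^{(n)}$-extendible cardinals, the forward direction adapts the proof of Theorem~\ref{th:makowsky} with chain-filtration replacing the covering argument (exactly as in Theorem~\ref{wve-thm}), and the converse uses sort logic augmented by an infinitary logic tailored to each bound $\theta$ so that the witnessing critical points are trapped in the set-sized interval $(\theta,\kappa]$ before pigeonhole is applied. Your explicit treatment of $(2)\Rightarrow(3)$, your closing of the cycle, and your diagnosis of why chain compactness yields only the \emph{weak} form of virtual $C^{(n)}$-extendibility all match the paper's (much terser) proof.

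However, one step in your $(1)\Rightarrow(2)$ is not justified as written. You choose $\kappa$ ``simultaneously a closure point of $\cL$ [and] weakly virtually $C^{(n)}$-extendible'' on the grounds that the closure points form a club while the weakly virtually $C^{(n)}$-extendible cardinals form a proper class. That inference is invalid: a club is only guaranteed to meet every \emph{stationary} class, and two unbounded classes of ordinals can be disjoint; moreover, nothing in the paper establishes enough correctness for weakly virtually $C^{(n)}$-extendible cardinals to force them to be closure points of an arbitrary $\Sigma_n$-definable logic. The repair is the one the proof of Theorem~\ref{th:makowsky} itself uses: $\kappa$ need not be a closure point. Fix a closure point $\delta$ of $\cL$ above the defining parameters, large enough that every language of size less than $o(\cL)$ has a renamed copy inside $V_\delta$, and then take $\kappa$ weakly virtually $C^{(n)}$-extendible above $\delta$, which exists since these cardinals are unbounded. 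Then for each $\phi\in T$ the renamed copy $\sigma$ of $\tau_\phi$ and the set $\cL(\sigma)$ lie in $V_\kappa$, hence below $\crit j$, which is all that the renaming computation $f_*(\phi)=j(\phi)$ actually requires (together with the $\Sigma_n$-correctness of $V_\alpha$ and $V_\beta$ about $\cL$, $\vDash_\cL$, and the induced bijections). With this change the rest of your argument goes through unchanged.
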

\begin{proof}
Suppose that virtual \Vopenka's principle holds and so for every $n<\omega$, we have a proper class of weakly virtually $C^{(n)}$-extendible cardinals. Then (3) then follows directly by the proof of Theorem~\ref{th:makowsky}.

Assume (2). We need to show that given a fixed ordinal $\beta$, there is $\gamma>\beta$ which is weakly virtually $C^{(n)}$-extendible. We use the logic $\bL^{s,\Sigma_n}_{\delta,\omega}$ for some $\delta>\beta$ to ensure that the critical point of the virtual embedding we obtain is above $\beta$ and argue as in the proof of Theorem~\ref{th:makowsky}.
\end{proof}
\begin{cor}\label{cor:WeakCompactnessVopenka}
If virtual \Vopenka's principle holds, then every logic has a weak compactness cardinal.
\end{cor}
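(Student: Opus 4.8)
The plan is to obtain the corollary immediately from the theorem just proved together with the observation recorded earlier that a $\kappa^+$-pseudo-chain compactness cardinal for a logic is a weak compactness cardinal for that logic. Fix an arbitrary logic $(\cL,\vDash_\cL)$. By the implication (1)$\Rightarrow$(2) of the preceding theorem, virtual \Vopenka's principle yields a cardinal $\kappa$ that is a $\kappa^+$-pseudo-chain compactness cardinal for $\cL$; since the underlying construction produces such cardinals above any prescribed bound, I may take $\kappa>o(\cL)$. It then suffices to check that such a $\kappa$ is a weak compactness cardinal for $\cL$, and since $\cL$ was arbitrary the corollary follows.

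To verify the remaining point I would run the standard reduction from weak compactness to chain compactness and then collapse the resulting pseudo-model. Let $T$ be a ${<}\kappa$-satisfiable $\cL(\tau)$-theory of size $\kappa$. Enumerating $T$ in order type $\kappa$ and taking initial segments writes $T=\bigcup_{\eta<\kappa}T_\eta$ as an increasing union of theories of size ${<}\kappa$, each satisfiable by ${<}\kappa$-satisfiability. By $\kappa^+$-pseudo-chain compactness, $T$ has a $\kappa^+$-pseudo-model $\cM$ in some language $\tau^*$, witnessed by a $\kappa^+$-forth system $\cF$ from $\tau$ to $\tau^*$. Let $\tau_T\subseteq\tau$ be the set of symbols actually occurring in $T$; since $|T|=\kappa$ and each sentence uses fewer than $o(\cL)\leq\kappa$ symbols, we have $|\tau_T|\leq\kappa$, so $\tau_T\in\cP_{\kappa^+}\tau$. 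Applying the extension property of $\cF$ to $\emptyset\in\cF$ and to $\tau_T$ gives a single renaming $g\in\cF$ with $\tau_T\subseteq\dom g$. As $\cM$ models $g_*\image(T\cap\cL(\dom g))$ and $T\subseteq\cL(\tau_T)\subseteq\cL(\dom g)$, the structure $\cM$ satisfies $g_*\image T$; pulling back along the renaming $g$ makes $\cM$ into a genuine $\tau_T$-model of $T$. Hence $T$ is satisfiable and $\kappa$ is a weak compactness cardinal for $\cL$.

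The only delicate point, and what I expect to be the crux, is this collapse of a pseudo-model to an honest model. A forth system is not required to be closed under unions of increasing chains, so in general one cannot assemble the small renamings of $\cF$ into one renaming defined on all of $\tau$. What makes the argument go through is that $\tau_T$ has size at most $\kappa$ while the pieces of a $\kappa^+$-forth system are allowed to have size up to $\kappa$; consequently a single application of the extension property already yields a renaming whose domain contains $\tau_T$, with no limit step needed. This is exactly why the $\kappa^+$ threshold is essential here (the $\omega$-pseudo version would leave us unable to cover the size-$\kappa$ language of $T$), and why taking $\kappa\geq o(\cL)$ is used.
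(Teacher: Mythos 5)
Your proof is correct and takes exactly the route the paper intends: the corollary is stated as an immediate consequence of the preceding theorem's implication (1)$\Rightarrow$(2) combined with the paper's earlier remark that a $\kappa^+$-pseudo-chain compactness cardinal for a logic is a weak compactness cardinal for it. Your detailed verification of that remark---collapsing the $\kappa^+$-pseudo-model to an honest model by a single application of the forth-system extension property, which is what forces you to arrange $\kappa\geq o(\cL)$ (available since the theorem's proof produces such cardinals above any prescribed bound)---is precisely the content the paper leaves implicit, and your identification of the $\kappa^+$ threshold as the crux is accurate.
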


The above analysis was carried out in ${\rm ZFC}$ permitting only the definable classes. However, all of it straightforwardly generalizes to the second-order context of ${\rm GBC}$ by replacing the notion of (virtually) $C^{(n)}$-extendible with the notion of (virtually) $A$-extendible cardinals for a class $A$. A cardinal $\kappa$ is \emph{$A$-extendible}, for a class $A$, if for every $\alpha>\kappa$, there is $\beta>\kappa$ such that there is an elementary embedding $j:(V_\alpha,\in,A\cap V_\alpha)\to (V_\beta,\in,A\cap V_\beta)$ with $\crit j=\kappa$ and $j(\kappa)>\alpha$. The virtual versions are defined analogously (for more details, see \cite{gh-genvop}). Thus, we get for example, that in ${\rm GBC}$, \Vopenka's principle holds if and only if for every class $A$, there is an $A$-extendible cardinal, etc.

In Definition \ref{forth-def}, we introduced the notion of a forth system between languages.  Below, we define a forth system between models of the same language (Definition \ref{game-def}.(\ref{forth-model})).  First, we explain the motivation and their connection to back-and-forth systems and Ehrenfeucht-\Fraisse\ games.

It is a classical result that two structures $M$ and $N$ from $V$ are isomorphic in a forcing extension if and only if the \emph{good} player has a winning strategy in the $\omega$-length Ehrenfeucht-\Fraisse\ game $\G_F(M,N)$. The game starts with the \emph{bad} player choosing to play an element $a_0\in M$ or $b_0\in N$ and the \emph{good} player has to respond with a move $b_0\in  N$ or $a_0\in M$ respectively so that the map $f=\{\langle a_0,b_0\rangle\}$ is a finite partial isomorphism between $M$ and $N$. Given a finite partial isomorphism $f$ between $M$ and $N$ that results from an initial play of $\G_F(M,N)$, at the next step of the game, the bad player again chooses to play an element out of either $M$ or $N$ and the good player has to respond to extend $f$ to a partial isomoprhism. It is easy to see that the good player having a winning strategy in $\G_F(M,N)$ is equivalent to the existence of a back-and-forth system. A \emph{back-and-forth system} between two models $M$ and $N$ in the same language $\tau$ is a collection $\cP$ of finite partial isomorphisms between $M$ and $N$ such that whenever $a\in M$ and $f\in\cP$, then there is $g\in \cP$ extending $f$ with $a\in\dom g$, and conversely whenever $b\in N$, then there is a $g'\in \cP$ extending $f$ with $b\in\text{ran}(g')$. Clearly, forcing with a back-and-forth system gives a virtual isomorphism and, in the other direction, a virtual isomorphism suffices to obtain a back-and-forth system via the forcing relation. The existence of a back-and-forth system is also equivalent to the assertion that the models $M$ and $N$ are elementary equivalent in the  quasi-logic $\bL_{{\rm Ord},\omega}$.

Schindler showed that two structures from $V$ have a virtual embedding if and only if the \emph{good} player has a winning strategy in the $\omega$-length modified Ehrenfeucht-\Fraisse\ game $\G_v(M,N)$ \cite{bgs-virt-vop}. The game starts with the \emph{bad} player playing an element $a_0\in M$ and the \emph{good} player has to respond with a move $b_0\in N$ so that the map $f=\{\langle a_0,b_0\rangle\}$ is a finite partial isomorphism between $M$ and $N$. The good player wins if at each step she can maintain a finite partial isomorphism between $M$ and $N$. For the game $\G_v(M,N)$, the corresponding notion to a back-and-forth system is a \emph{forth-system} $\cP$, which is a collection of finite partial isomorphisms $f$ between $M$ and $N$ with only the forth extension property. The corresponding quasi-logic will be a subclass of $\bL_{{\rm Ord},\omega}$, which we will call \emph{virtual logic}. Indeed, we can define the notion of a virtualization for any abstract logic.

\begin{defin}\label{game-def}
Fix a logic $\cL$.
\begin{enumerate}
 \item Given two structures $M$ and $N$ in the same language, the game $\G_{v_\cL}(M, N)$ is defined exactly as the game $\G_{v}(M,N)$ with  satisfaction given by the logic $\cL$ in place of first-order logic.
    \item The quasi-logic $\cL^v(\tau)$, for a language $\tau$, consists of the formulas given by the following closure rules.
    \begin{enumerate}
        \item Every formula of $\cL(\tau)$ is a formula of $\cL^v(\tau)$,
        \item If $\phi(x) \in \cL^v(\tau)$ (with possibly other free variables), then $\exists x \phi(x) \in \cL^v(\tau)$.
        \item If $\{\phi_i \mid i \in I\}$ is a collection of formulas from $\cL^v(\tau)$ jointly in finitely many variables, then so is $\bigwedge_{i\in I} \phi_i$.
    \end{enumerate}

    \item \label{forth-model}An $\cL$-forth system $\cP$ from a $\tau$-structure $M$ to a $\tau$-structure $N$ is a collection of $\cL$-elementary embeddings with the following properties.
    \begin{enumerate}
        \item $\emptyset \in \cP$.
        \item If $f \in \cP$ and $a \in M$, then there is $g \supseteq f$ in $\cP$ such that $a \in \dom g$.
    \end{enumerate}
\end{enumerate}
\end{defin}
Recall, that technically our abstract logic does not support formulas with free variables; instead something like ``$\phi(x) \in \cL^v(\tau)$" with $x$ free really means ``$\phi(c) \in \cL^v(\tau\cup\{c\})$", where $c$ is a new constant symbol.  However, we use the notation of free variables for better readability.

\begin{theorem}\label{th-virt}
For structures $M$ and $N$ in the same language $\tau$, the following are equivalent.
\begin{enumerate}
 \item The good player has a winning strategy in $\G_{\cL_v}(M, N)$.
    \item $N \vDash Th_{\cL^v}(M)$, the collection of all sentences in $\cL^v(\tau)$ that $M$ satisfies.
    \item There is an $\cL$-forth system from $M$ to $N$.
    \item There is a virtual $\cL$-elementary embedding $f:M\to N$.

\end{enumerate}
\end{theorem}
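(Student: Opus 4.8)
The plan is to prove the theorem as a cycle of implications, following the pattern of the classical back-and-forth characterization of isomorphism-in-a-forcing-extension, but adapted to the asymmetric ``forth-only'' setting and to an arbitrary abstract logic $\cL$. The natural cycle is $(1)\Rightarrow(3)\Rightarrow(4)\Rightarrow(2)\Rightarrow(1)$, since each of these links is a small modification of a standard argument. I would first observe that, throughout, only the actual sets $\cL(\tau)$ and $(\vDash_\cL)\restriction(M\cup N)$ matter, so that ``$\cL$-elementary'' for embeddings between the fixed structures $M,N$ is a genuine property coded by sets in $V$, exactly as explained in the preamble to the section on virtual $\cL$-embeddings.

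\emph{For $(1)\Rightarrow(3)$:} given a winning strategy $\sigma$ for the good player in $\G_{\cL_v}(M,N)$, I would let $\cP$ be the collection of all finite partial $\cL$-elementary maps $f$ that arise as positions consistent with $\sigma$ (that is, initial segments of plays in which the good player has followed $\sigma$). Clause (a), $\emptyset\in\cP$, is the empty position. For clause (b), given $f\in\cP$ and $a\in M$, the bad player may play $a$, and $\sigma$ responds with some $b\in N$ so that $f\cup\{(a,b)\}$ is again $\cL$-elementary and lies in $\cP$; this is exactly the forth extension property. Note that only the forth direction is needed, matching Schindler's modified game $\G_v$.

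\emph{For $(3)\Rightarrow(4)$:} I would force with $\cP$ ordered by reverse inclusion. Genericity together with the forth extension property (b) guarantees that the union $f=\bigcup_{g\in G}g$ of the generic filter is a total function with domain all of $M$; the key point is that for each $a\in M$ the set of conditions whose domain contains $a$ is dense, by (b). Each $g\in\cP$ is $\cL$-elementary as a map between the fixed structures $M,N$ (an honestly set-coded condition), so $f$ is $\cL$-elementary by the de re reading, and hence $f:M\to N$ is a virtual $\cL$-elementary embedding. \emph{For $(4)\Rightarrow(2)$:} if $f:M\to N$ is virtually $\cL$-elementary, then in the forcing extension $M$ and $N$ satisfy the same $\cL(\tau)$-sentences along $f$; I would argue by induction on the closure rules defining $\cL^v(\tau)$ that $M\vDash\phi$ implies $N\vDash\phi$ for every $\phi\in\cL^v(\tau)$. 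The base case is $\cL$-elementarity of $f$; the conjunction case uses that $f$ preserves each conjunct; and the existential case $\exists x\,\psi(x)$ uses that a witness $a\in M$ for $\psi$ maps to a witness $f(a)\in N$. Since $\Th_{\cL^v}(M)$ and satisfaction of each fixed sentence are absolute between $V$ and the forcing extension (the sentences and $\vDash_\cL$ are sets), $N\vDash\Th_{\cL^v}(M)$ holds already in $V$.

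\emph{For $(2)\Rightarrow(1)$,} the expected main obstacle: I would build the good player's strategy so that after a finite position $f=\{(a_0,b_0),\dots,(a_{k-1},b_{k-1})\}$ she maintains the invariant that $N\vDash\Th_{\cL^v}(M)$ \emph{relative to the parameters played}, i.e. for every $\phi(x_0,\dots,x_{k-1})\in\cL^v(\tau)$ with $M\vDash\phi(\bar a)$ we have $N\vDash\phi(\bar b)$. Given a new bad move $a_k\in M$, she must produce $b_k\in N$ preserving the invariant. The delicate step is that the set of $\cL^v$-formulas in the parameters $\bar a, a_k$ may be a proper class of formulas (since $\cL^v\subseteq\bL_{{\rm Ord},\omega}$ is a quasi-logic), so one cannot naively form the conjunction of all of them as a single $\cL^v$-formula to drive a compactness argument. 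The way around this is to use the closure rule (c) allowing conjunctions of arbitrary size but in \emph{finitely many free variables}: the assertion ``there exists $x_k$ realizing exactly the $\cL^v$-type of $a_k$ over $\bar a$'' is itself expressible as $\exists x_k\,\bigwedge_i\phi_i(\bar x, x_k)$ where the $\phi_i$ range over all $\cL^v$-formulas true of $(\bar a,a_k)$, and this is a legitimate $\cL^v(\tau)$-formula in the variables $\bar x$ only. Since $M$ satisfies it and the invariant gives $N\vDash\Th_{\cL^v}(M)$ in the parameters $\bar b$, $N$ must contain a suitable witness $b_k$, and the invariant is preserved. This is where the full strength of the infinitary conjunction closure rule in Definition~\ref{game-def} is essential, and handling the proper-class-of-formulas issue correctly is the crux of the whole theorem.
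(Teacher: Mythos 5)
Your cycle $(1)\Rightarrow(3)\Rightarrow(4)\Rightarrow(2)\Rightarrow(1)$ is the right architecture, and the first three links are correct and are exactly the ``variations of classical Ehrenfeucht--\Fraisse{} arguments'' that the paper alludes to without spelling out: positions consistent with a winning strategy form an $\cL$-forth system; forcing with a forth system under reverse inclusion gives a total map by density (clause (b)) and the directedness of the generic filter, and this map is $\cL$-elementary in the de re sense since each condition is a set-coded partial $\cL$-elementary map from $V$; and $(4)\Rightarrow(2)$ follows by induction on the well-founded formation of $\cL^v$-formulas (the closure rules $\exists$ and $\bigwedge$ are preserved forward along $\cL$-elementary maps), together with the absoluteness of de re satisfaction of each fixed $\cL^v$-sentence between $V$ and the forcing extension, which you correctly invoke.

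The genuine gap is in $(2)\Rightarrow(1)$, at precisely the step you call the crux. You correctly diagnose that the $\cL^v$-formulas true of $(\bar a,a_k)$ in $M$ form a proper class, but your proposed workaround --- the formula $\exists x_k\,\bigwedge_i\phi_i(\bar x,x_k)$ with the $\phi_i$ ranging over \emph{all} such formulas --- is exactly the naive class-sized conjunction you just ruled out. Clause (c) of Definition~\ref{game-def} can only mean conjunctions indexed by a \emph{set}: the paper places $\cL^v$ inside the quasi-logic $\bL_{{\rm Ord},\omega}$, whose individual formulas are sets with set-sized conjunctions, and a conjunction indexed by a proper class is itself a proper class, so it can neither be an element of the class $\cL^v(\tau)$ nor occur as a constituent of $\exists x_k(\cdots)$. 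The repair is the standard Karp-style trick, which exploits that $N$ is a set: suppose toward a contradiction that no $b\in N$ preserves your invariant when responding to $a_k$. For each $b\in N$ choose a single formula $\psi_b\in\cL^v(\tau)$ with $M\vDash\psi_b(\bar a,a_k)$ but $N\not\vDash\psi_b(\bar b,b)$ (Scott's trick cuts the class of candidate formulas down to the set of those of minimal rank, and then the axiom of choice applies). Now $\exists x_k\,\bigwedge_{b\in N}\psi_b(\bar x,x_k)$ is a legitimate $\cL^v$-formula, being a conjunction indexed by the set $N$; $M$ satisfies it at $\bar a$ with witness $a_k$, so by your invariant $N$ satisfies it at $\bar b$, and any witness $b^*\in N$ yields $N\vDash\psi_{b^*}(\bar b,b^*)$, contradicting the choice of $\psi_{b^*}$. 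With this single substitution, your strategy and invariant go through verbatim, and the rest of your proof stands.
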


The new parts of Theorem~\ref{th-virt} are proved by varying classical proofs involving Ehrenfeucht-\Fraisse\ games. If $M$ and $N$ satisfy any one of the equivalent conditions of Theorem~\ref{th-virt}, then we shall say that $M$ is a \emph{virtual submodel} of $N$. This lets us give L\"{o}wenheim-Skolem-Tarski style characterizations of virtual large cardinals. For example, a virtualization of classical arguments about supercompact cardinals from \cite{m-roleof} show:
\begin{theorem}
Suppose $\kappa$ is virtually supercompact and $\Psi$ is an $\bL^2$-sentence. If $M$ is an $\tau(\Psi)$-structure such that $M \vDash \Psi$, then $M$ has a virtual submodel of size less than $\kappa$ that also models $\Psi$.
\end{theorem}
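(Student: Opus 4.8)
The plan is to virtualize Magidor's supercompact reflection argument, with the crucial twist that the reflected existential witness will itself only be a \emph{virtual} embedding, recovered through the absoluteness machinery of Lemma~\ref{lem:absolutenessLemma}. First I would reduce to the case $|M|\geq\kappa$ (otherwise $M$ is its own virtual submodel via the identity) and arrange, by renaming, that $\Psi$ and $\tau(\Psi)$ lie in $V_\kappa$, so that any embedding with critical point $\kappa$ fixes them. Set $\lambda$ large enough that $\lambda\geq 2^{|M|}$ and $M\in V_\lambda$, and choose $\alpha$ so that $V_\alpha$ models a large fragment of ${\rm ZFC}$. Invoking virtual supercompactness, fix such an $\alpha>\lambda$, a transitive $\cM$ with $\cM^\lambda\subseteq\cM$ (in $V$), and, in a forcing extension $V[G]$ obtained by collapsing $V_\alpha$, a virtual elementary embedding $j:V_\alpha\to\cM$ with $\crit j=\kappa$ and $j(\kappa)>\lambda$. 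The closure of $\cM$ guarantees $M,j(M)\in\cM$ and that $\cM$ carries all subsets of $M$ and $j(M)$, so $\bL^2$-statements about these two structures may be treated as first-order over their two-sorted power-set expansions and are computed correctly by $\cM$.

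The heart of the argument is to show that $\cM$ believes there is a virtual $\bL^2$-embedding $M\to j(M)$, even though the only embedding we have, namely $j\restriction M$, lives in $V[G]$ and need not be an element of $\cM$. To see this I would work inside $V[G]$: since $V[G]$ collapses $V_\alpha$, it contains an $\cM$-generic $g$ for ${\rm Coll}(\omega,M)$, and in $\cM[g]$ the structure $M$ is countable while $j(M)\in\cM[g]$. Applying Lemma~\ref{lem:absolutenessLemma} to the power-set expansions of $M$ and $j(M)$ inside the transitive model $W=\cM[g]$, using the genuine $\bL^2$-elementary embedding $j\restriction M$ that exists in the ambient $V[G]$, yields an $\bL^2$-elementary embedding $j^*:M\to j(M)$ in $\cM[g]$. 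By homogeneity of the collapse this gives $\cM\models$ ``${\rm Coll}(\omega,M)$ forces an $\bL^2$-elementary embedding $M\to j(M)$'', i.e. $\cM$ sees a virtual $\bL^2$-embedding $M\to j(M)$. Since $M\models\Psi=j(\Psi)$ (computed correctly by $\cM$) and $|M|\leq\lambda<j(\kappa)$, the structure $M$ witnesses in $\cM$ the statement
\[
\exists P\;\bigl(|P|<j(\kappa)\ \wedge\ P\models j(\Psi)\ \wedge\ \text{there is a virtual }\bL^2\text{-embedding }P\to j(M)\bigr).
\]

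Finally I would reflect this statement through the elementary embedding $j:V_\alpha\to\cM$ (which holds in $V[G]$, but concerns only the sets $V_\alpha,\cM\in V$), obtaining that $V_\alpha\models \exists P\,(|P|<\kappa\wedge P\models\Psi\wedge\text{there is a virtual }\bL^2\text{-embedding }P\to M)$. Taking such a witness $M_0:=P\in V_\alpha$, the correctness of $V_\alpha$ for $\bL^2$-satisfaction on the small structures $M_0,M$ and for the relevant collapse forcing (guaranteed because $\alpha$ is large and $V_\alpha$ carries all subsets of $M_0$ and $M$) transfers everything to $V$: $M_0$ genuinely has size $<\kappa$, models $\Psi$, and admits a virtual $\bL^2$-embedding into $M$. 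By Theorem~\ref{th-virt}, $M_0$ is therefore a virtual submodel of $M$ modeling $\Psi$. The main obstacle is precisely the middle step: unlike the genuine supercompact case, where $\lambda$-closure places $j\restriction M$ into $\cM$ and lets one reflect an honest small elementary submodel, here $j\restriction M\notin\cM$, and it is only the absoluteness lemma for (collapsed) countable structures that lets $\cM$ recover a virtual embedding — which is exactly why the conclusion yields a \emph{virtual}, rather than an honest, submodel.
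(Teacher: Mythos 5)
You take a genuinely different route from the paper. The paper's entire proof is an appeal to Magidor's ``inverse'' characterization of supercompactness---for every $\delta>\kappa$ there are $\bar\delta<\kappa$, $\gamma$, and an elementary $j:V_{\bar\delta}\to V_\delta$ with $\crit j=\gamma$ and $j(\gamma)=\kappa$---whose virtualized form holds for virtually supercompact cardinals; reflection then falls out by taking such a virtual embedding with $M$ in its range and pulling $M$ back along it. You instead virtualize the direct supercompact-reflection argument through an embedding $j:V_\alpha\to\cM$, and you correctly isolate both the obstruction ($j\restriction M\notin\cM$) and the right tool (Lemma \ref{lem:absolutenessLemma} applied over a collapse extension of $\cM$, followed by the forcing theorem over $\cM$ and elementarity of $j$). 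This outline can be made to work, but two of your steps are wrong as written and need repair.

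First, the claim that $\lambda$-closure gives $\cM$ ``all subsets of $M$ and $j(M)$'' fails for $j(M)$: having reduced to $|M|\geq\kappa$, the universe of $j(M)$ has $\cM$-cardinality at least $j(\kappa)$, hence true cardinality at least $\lambda^+$ (since $j(\kappa)\geq(\lambda^+)^{\cM}=\lambda^+$, using $H_{\lambda^+}\subseteq\cM$), so $\lambda$-closure says nothing about $\cP(j(M))$, and nothing else guarantees $\cP(j(M))\subseteq\cM$. Consequently there is no ``genuine $\bL^2$-elementary embedding $j\restriction M:M\to j(M)$'' in $V[G]$ to feed into the absoluteness lemma: elementarity of $j$ only yields that $j\restriction(M\cup\cP(M))$ is elementary from $(M,\cP(M),\in)$ into the expansion $(j(M),\cP^{\cM}(j(M)),\in)$ \emph{as computed in $\cM$}. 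The repair is to run the whole argument de re with respect to $\cM$'s satisfaction on the target side (exactly the de re/de dicto distinction of Section \ref{sec:virtual}): the absoluteness lemma then hands $\cM[g]$ an embedding of $(M,\cP(M),\in)$ into $\cM$'s expansion of $j(M)$, which is precisely what ``$\cM$ sees a virtual $\bL^2$-embedding $M\to j(M)$'' means internally. This costs nothing in the end, because after reflecting the existential statement through $j$ the target becomes $M$ itself, and $V_\alpha$ (hence $V$) is fully correct about $\bL^2$-satisfaction for $M$ and for the small witness $P$.

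Second, your collapse is too small. Lemma \ref{lem:absolutenessLemma} requires the \emph{source} structure to be countable in $W=\cM[g]$, and your source is the power-set expansion $(M,\cP(M),\in)$, of size $2^{|M|}$; a ${\rm Coll}(\omega,M)$-generic leaves $\cP(M)=\cP^{\cM}(M)$ uncountable in $\cM[g]$, so the lemma does not apply. You must take $g$ to be $\cM$-generic for ${\rm Coll}(\omega,\cP(M))$ (i.e., collapse the transitive closure of the whole expansion). Such a $g$ still exists in $V[G]$: this poset and all of its dense subsets lie in $H_{\lambda^+}\subseteq\cM$, so $\cM$-genericity coincides with $V$-genericity, and $V[G]$ makes the family $\cP^{V}({\rm Coll}(\omega,\cP(M)))\in V_\alpha$ countable, so a generic can be built by recursion. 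A smaller point in the same spirit: the definition of virtual supercompactness hands you $\alpha$ and does not let you choose it so that $V_\alpha\models{\rm ZFC}^*$, which you need for $\cM$ and $\cM[g]$ to support the forcing theorem and the lemma; this is standard to arrange but should be addressed rather than assumed.
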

\begin{proof}
Magidor showed that a cardinal $\kappa$ is supercompact if and only if for every $\delta>\kappa$, there is $\bar\delta<\kappa$ and an elementary embedding $j:V_{\bar\delta}\to V_\delta$ with $\crit j=\gamma$ and $j(\gamma)=\kappa$. The obvious virtualized reformulation holds for virtually supercompact cardinals as well, and this immediately implies the desired reflection.
\end{proof}
\noindent Magidor also showed that the least cardinal with this reflection property is supercompact. We do not expect the result to generalize to the virtual context for the usual reasons.
\begin{question}
Is the least cardinal $\kappa$ with the property that for every $\bL^2$-sentence $\Psi$, every $\tau(\Psi)$-structure $M\vDash\Psi$ has a virtual submodel of size less than $\kappa$ that also satisfies $\Psi$ virtually supercompact?
\end{question}

Finally, using virtual logic we can give an alternative compactness-style characterization of weakly virtually extendible cardinals asserting that whenever $T \subseteq \bL_{\kappa,\kappa}^2(\tau^*)$ has size $\kappa$ and all of its ${<}\kappa$-sized pieces can be realized in expansions of a single model $\cM$ of some sub-language $\tau\subseteq \tau^*$, then $T$ has a model $\cN$ also satisfying the virtual theory of $\cM$.

\begin{theorem}
A cardinal $\kappa$ is weakly virtually extendible if and only if for every theory $T \subseteq \bL_{\kappa,\kappa}^2(\tau^*)$ of size $\kappa$, if $\cM$ is a $\tau$-structure, with $\tau\subseteq\tau^*$, such that for every $T^*\subseteq T$ of size less than $\kappa$, there is an expansion of $\cM$ to a $\tau^*$-structure making $\cM\vDash T^*$, then there is a $\tau^*$-structure $\cN$ satisfying  $Th_{(\bL^2_{\kappa,\omega})^v(\tau)}(\cM)$ together with $T$.
\end{theorem}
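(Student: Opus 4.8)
The plan is to prove both directions by reducing the ``virtual theory'' clause to the existence of a virtual $\bL^2_{\kappa,\omega}$-elementary embedding via Theorem~\ref{th-virt}: a $\tau$-structure $\cN\restriction\tau$ satisfies $Th_{(\bL^2_{\kappa,\omega})^v(\tau)}(\cM)$ if and only if there is a virtual $\bL^2_{\kappa,\omega}$-elementary embedding $\cM\to\cN\restriction\tau$. Throughout I may assume $|\tau^*|\le\kappa$, since $T$ has size $\kappa$ and each formula of $\bL^2_{\kappa,\kappa}$ mentions fewer than $\kappa$ symbols, so $\tau^*$ (hence $\tau$) is the language generated by $T$. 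The overall shape mirrors the chain-compactness characterization of weak virtual extendibility in Theorem~\ref{wve-thm}; the essential new point is to produce an \emph{honest} model of $T$ in the fixed language $\tau^*$ rather than a pseudo-model.

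For the forward direction, assume $\kappa$ is weakly virtually extendible and fix $T,\cM$. First I filter $T=\bigcup_{\eta<\kappa}T_\eta$ by enumerating $T=\{\phi_\xi\mid\xi<\kappa\}$ and setting $T_\eta=\{\phi_\xi\mid\xi<\eta\}$; each $T_\eta$ has size $<\kappa$, so by hypothesis it has a $\tau^*$-expansion $\cM_\eta$ of $\cM$ with $\cM_\eta\vDash T_\eta$. Choose $\alpha$ large and closed enough that $V_\alpha$ contains $\cM$, $\tau^*$, and the sequences $\langle T_\eta\mid\eta<\kappa\rangle$, $\langle\cM_\eta\mid\eta<\kappa\rangle$, and correctly computes $\bL^2_{\kappa,\kappa}$-satisfaction. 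By weak virtual extendibility pass to a collapse extension $V[G]$ by $\mathrm{Coll}(\omega,V_\alpha)$ carrying an elementary $j:V_\alpha\to V_\beta$ with $\crit j=\kappa$. Writing $j(\langle T_\eta\rangle)=\langle T^*_\eta\mid\eta<j(\kappa)\rangle$, since $j(\eta)=\eta$ for $\eta<\kappa$ and the sequence is increasing we get $j\image T\subseteq T^*_\kappa$, and by elementarity $T^*_\kappa$ is modelled in $V_\beta$ by an expansion $\cM^*_\kappa$ of $j(\cM)$. The point is now to transport $\cM^*_\kappa$ back to $\tau^*$: because $|\tau^*|\le\kappa$, Observation~\ref{obs:kunen} (and its proof) yields $j\restriction\tau^*\in V$, a renaming $\tau^*\to j\image\tau^*$ that is a union of $V$-renamings; letting $\cN$ be the $\tau^*$-structure obtained from $\cM^*_\kappa$ by this renaming, the identity $(j\restriction\tau^*)_*(\phi)=j(\phi)$ for the explicit logic $\bL^2_{\kappa,\kappa}$ (as in the proof of Theorem~\ref{th:externallyDefinableWoodinCharacterization}) gives $\cN\vDash\phi$ in $V$ for each $\phi\in T$, i.e.\ $\cN\vDash T$. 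Finally $j\restriction|\cM|$ is an $\bL^2_{\kappa,\omega}$-elementary embedding $\cM\to\cN\restriction\tau$ in $V[G]$ (using that $V_\alpha,V_\beta$ compute $\bL^2_{\kappa,\omega}$-satisfaction correctly and that $\cN\restriction\tau$ is the renaming of $j(\cM)$), hence a virtual such embedding, so $\cN\restriction\tau\vDash Th_{(\bL^2_{\kappa,\omega})^v(\tau)}(\cM)$ by Theorem~\ref{th-virt}.

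For the converse, assume the compactness property and fix $\alpha>\kappa$; I must produce a virtual elementary $j:V_\alpha\to V_\beta$ with $\crit j=\kappa$. Let $\tau=\{\in,d\}$ and $\tau^*=\{\in,d,c\}$, let $\cM=(V_\alpha,\in,\kappa)$ interpret $d$ as $\kappa$, and let $T$ consist of Magidor's $\Phi$, the sentence $c<d$, and the sentences $\neg\delta_\xi(c)$ for $\xi<\kappa$, where $\delta_\xi(x)$ is the $\bL_{\kappa,\omega}$-formula defining the ordinal $\xi$. Every $<\kappa$-sized piece of $T$ is satisfied by expanding $\cM$ so that $c$ denotes an ordinal below $\kappa$ avoiding the fewer than $\kappa$ forbidden values, so the hypothesis yields $\cN\vDash T$ with $\cN\restriction\tau\vDash Th_{(\bL^2_{\kappa,\omega})^v(\tau)}(\cM)$. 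Since $\cN\vDash\Phi$, the reduct $\cN\restriction\tau$ is well-founded and collapses to some $(V_\beta,\in,d^{\cN})$, and Theorem~\ref{th-virt} turns the virtual-theory clause into a virtual $\bL^2_{\kappa,\omega}$-embedding $h:\cM\to\cN\restriction\tau$; composing with the collapse gives a virtual elementary $j:V_\alpha\to V_\beta$. The ordinals below $\kappa$ are $\bL_{\kappa,\omega}$-definable in transitive $\in$-models, so $j$ fixes them, while $c^{\cN}<d^{\cN}=j(\kappa)$ together with $c^{\cN}\ne\xi$ for all $\xi<\kappa$ forces $j(\kappa)>\kappa$; hence $\crit j=\kappa$, with no demand on $j(\kappa)$ versus $\alpha$, as required for weak virtual extendibility.

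I expect the main obstacle to be the forward direction's language bookkeeping: guaranteeing that the structure $\cN$ read off from $\cM^*_\kappa$ in $V_\beta$ is a genuine element of $V$ that \emph{honestly} models $T$ in the fixed language $\tau^*$. This is exactly where $|\tau^*|\le\kappa$ is used (via Observation~\ref{obs:kunen}, to get $j\restriction\tau^*\in V$), and where one must verify the renaming identity $(j\restriction\tau^*)_*(\phi)=j(\phi)$ and the absoluteness of $\bL^2_{\kappa,\kappa}$-satisfaction between $V_\beta$ and $V$; this replaces the pseudo-model shortcut available in Theorems~\ref{ve-thm} and \ref{wve-thm}. A secondary but crucial point for the \emph{weak} case is that filtering $T$ into a $\kappa$-chain locates the needed model as the single term $T^*_\kappa$ inside $V_\beta$, thereby avoiding any covering of $j\image T$ by a small set—covering that is unavailable here because $j\image\kappa$ need not remain bounded below $j(\kappa)$ once $\kappa$ has been collapsed.
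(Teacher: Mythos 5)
Your proof is correct. The direction from the compactness property to weak virtual extendibility is essentially the paper's own argument: a theory consisting of Magidor's $\Phi$ together with constraints forcing a constant to name an ordinal below (the image of) $\kappa$ that differs from every $\xi<\kappa$, with Theorem~\ref{th-virt} converting the virtual-theory clause into a virtual embedding; your use of the $\bL_{\kappa,\omega}$-definitions $\delta_\xi$ in place of the paper's constants $c_\xi$ and elementary diagram is cosmetic. The real divergence is in the other direction. The paper exploits the assumption that $\tau^*$ has size $\kappa$ to code both $\tau^*$ and $T$ as subsets of $V_\kappa$; then every symbol and every sentence is fixed pointwise by $j$, so $j\image T=T\subseteq j(T)$, and applying the hypothesis inside $V_\beta$ (via elementarity) to the subtheory $T$ of $j(T)$, which has size $\kappa<j(\kappa)$, directly produces an expansion of $j(\cM)$ that is verbatim a $\tau^*$-structure modeling $T$ --- no filtration, no renaming, no appeal to Observation~\ref{obs:kunen}. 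You instead leave $T$ and $\tau^*$ where they are, locate a model of $T^*_\kappa\supseteq j\image T$ via the chain filtration, and transport it back to $\tau^*$ along $j\restriction\tau^*\in V$, verifying the renaming identity $(j\restriction\tau^*)_*(\phi)=j(\phi)$ and the absoluteness of $\bL^2_{\kappa,\kappa}$-satisfaction between $V_\beta$ and $V$. Both routes are valid: the paper's coding trick buys a much shorter argument, while your transport argument is the one that survives in settings where the language cannot be re-coded below the critical point; it is exactly the mechanism of Theorems~\ref{thm:woodin-char} and~\ref{th:makowsky}, here imported into the virtual context.

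Two small remarks. First, your closing claim that a covering argument is ``unavailable'' is not right: since $|T|=\kappa$ and a bijection from $\kappa$ onto $T$ lies in $V_\alpha$, the proof of Observation~\ref{obs:kunen} gives $j\restriction T\in V$, hence $j\image T\in V_\beta$, so one could take $T^*=j\image T$ directly (note that $j\image\kappa=\kappa$ because $\crit j=\kappa$, so no unboundedness issue arises); indeed, the paper's proof is in effect such a covering argument, with coding arranging $j\image T=T$. Second, your opening reduction to $|\tau^*|\le\kappa$ glosses over the fact that shrinking $\tau^*$ to the language generated by $T$ also shrinks $\tau$, while the conclusion's virtual-theory clause refers to the full $\tau$; however, the paper's own proof silently assumes $|\tau^*|=\kappa$ as well, so this is best read as an implicit hypothesis of the theorem rather than a gap in your argument.
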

\begin{proof} Suppose the compactness characterization holds and fix $\alpha > \kappa$. Let $\tau^*$ be the language consisting of a binary relation $\in$ and constants $\{c_\xi\mid\xi\leq\kappa\}\cup\{c\}$ and let $\tau$ be $\tau^*$ without $c$. Let $T$ be the following $\bL_{\kappa,\omega}^2(\tau)$-theory.
$${\rm ED}_{\bL_{\kappa,\omega}}(V_\alpha,\in,c_\xi)_{\xi\leq\kappa}\cup \{c_\xi<c<c_\kappa\mid \xi<\kappa\}\cup \{\Phi\}.$$
It is easy to see that if $T^*$ is any sub-theory of $T$ of size less than $\kappa$, then we can interpret $c$ by a large enough $\lambda<\kappa$. So by hypothesis, there is a virtual $\bL_{\kappa,\omega}(\tau)$-embedding \hbox{$j:(V_\alpha,\in,c_\xi)_{\xi\leq\kappa}\to (V_\beta,\in,c_\xi)_{\xi\leq\kappa}$}. The only question is how each $c_\xi$ is interpreted in $V_\beta$. Since $V_\beta$ satisfies the $\bL_{\kappa,\omega}$-theory of $(V_\alpha,\in)$, it knows that for $\xi<\kappa$, that $c_\xi$ must be interpreted as $\xi$. Also, since $V_\beta$ is a model $T$, it knows that $c_\kappa>\kappa$. Therefore $\kappa$ is the critical point of $j$.

In the other direction, suppose that $\kappa$ is weakly virtually extendible. Fix a $\kappa$-sized $\bL_{\kappa,\kappa}^2(\tau^*)$-theory $T$ and let $\cM$ be a $\tau$-structure as in the hypothesis. Since $\tau^*$ has size $\kappa$, we can assume that both $\tau^*$ and $T$ are subsets of $V_\kappa$. Fix $V_\alpha$ large enough that it contains the model $\cM$ and fix a virtual elementary embedding $j:V_\alpha\to V_\beta$ with critical point $\kappa$. So $V_\beta$ satisfies that for every $T^*\subseteq j(T)$ of size less than $j(\kappa)$ there is an expansion of $j(\cM)$ to a $j(\tau^*)$-structure such that $j(\cM)\vDash T^*$. In particular, there is an expansion of $j(\cM)$ to a $\tau^*$-structure satisfying $j\image T=T$. It is not difficult to see that $j:\cM\to j(\cM)$ is a virtual $\bL^2_{\kappa,\kappa}$-elementary embedding. Thus, $j(\cM)$ is a model of $T$ that satisfies the virtual theory $Th_{(\bL^2_{\kappa,\omega})^v(\tau)}(\cM)$ .
\end{proof}

\section{\Vopenka\ for weak compactness}\label{sec:vopenka-weak}

Recall that Vop\v{e}nka's principle is equivalent to the assertion that every logic has a strong compactness cardinal (Theorem \ref{th:makowsky}).  In this section, we explore the assertion that every logic has a \emph{weak} compactness cardinal.  Corollary \ref{cor:WeakCompactnessVopenka} shows that this follows from the virtual version of Vop\v{e}nka's Principle and, in particular, is consistent with $L$.  The main result of this section (Theorem \ref{th:subtle}) connects this weak compactness principle with \emph{subtlety}, although with two important caveats:
\begin{itemize}
	\item we assume that there are \emph{stationarily many} weak compactness cardinals; and
	\item we use global choice (and the formalism of G\"{o}del-Bernays set theory).
\end{itemize}
We show that each of these points is necessary for the connection with subtlety (see Theorems \ref{thm-not-stat} and \ref{th:globalChoiceCE}).

This section deals with the set-theoretic principle \emph{${\rm Ord}$ is subtle} (Definition \ref{def-subtle}), which is an assertion about properties of classes\footnote{All of these arguments could be rephrased using the principle `$\kappa$ is subtle,' but we prefer the `${\rm Ord}$ is subtle' formalism due to its similarity to Vop\v{e}ka's Principle (versus $\kappa$ is Vop\v{e}nka).  Additionally, this formalism helps us see the subtleties involved with global choice.}. So we will start by explaining some frameworks that we can use for our formal setting. In first-order set theory, classes are definable with (parameters) collections of sets, and therefore objects of the meta-theory. In second-order set theory, classes are actual elements of the model. Second-order set theories are formalized in a two-sorted logic with objects for both sets and classes. A standard axiomatization of second-order set theory is G\"odel-Bernays set theory ${\rm GBC}$. The theory ${\rm GBC}$ consists of ${\rm ZFC}$ together with the following axioms for classes: extensionality, replacement (a class function restricted to a set is a set), global choice axiom (there is a well-ordering of sets), and the comprehension scheme for first-order formulas asserting that every first-order formula (with set/class parameters) defines a class. A first-order model of ${\rm ZFC}$ together with its definable collections satisfies all the axioms of ${\rm GBC}$ except possibly global choice. Theorem~\ref{th:globalChoiceCE} below gives a standard class forcing construction of a ${\rm ZFC}$-universe without a definable global well-order. Since our results below will require global choice, we will have to work either in ${\rm GBC}$ or over a first-order ${\rm ZFC}$-model with a definable global well-order, such as the constructible universe $L$. Alternatively, we can assume that we are working with a rank-initial segment $V_\kappa$, for an inaccessible cardinal $\kappa$ and our classes are precisely $V_{\kappa+1}$. The second-order model consisting of the sets $V_\kappa$ and classes $V_{\kappa+1}$ satisfies ${\rm GBC}$ with full comprehension for all second-order assertions, which together comprise the much stronger second-order set theory Kelley-Morse.

Note that if $\kappa$ is a strong compactness cardinal for a logic $\cL$, then every $\lambda>\kappa$ is also a strong compactness cardinal for $\cL$. This does not hold for weak compactness cardinals, however we do have the following observation.

\begin{obs}
If every logic $\cL$ has a weak compactness cardinal, then every logic $\cL$ has unboundedly many weak compactness cardinals.
\end{obs}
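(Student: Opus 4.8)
The plan is to reduce unboundedness to a single invocation of the hypothesis, by building, for each prospective bound $\beta$, an auxiliary logic whose weak compactness cardinals are automatically forced above $\beta$ while remaining weak compactness cardinals of the original logic. First I would record the easy \emph{monotonicity principle}: if $\cL_0$ and $\cL_1$ are logics with $\cL_0\subseteq\cL_1$, then every weak compactness cardinal of $\cL_1$ is a weak compactness cardinal of $\cL_0$. Indeed, any ${<}\kappa$-satisfiable $\cL_0$-theory $T$ of size $\kappa$ is, viewed in the extension, a ${<}\kappa$-satisfiable $\cL_1$-theory of size $\kappa$ (satisfaction of $\cL_0$-sentences is inherited by $\cL_1$), so it has a model, and a model of $T$ as an $\cL_1$-theory is a model of $T$ as an $\cL_0$-theory.

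Now fix a logic $\cL$ and an infinite cardinal $\beta$; it suffices to produce a weak compactness cardinal of $\cL$ strictly above $\beta$. Set $\cL^*=\cL\cup\bL_{\beta^+,\omega}$, which is again an abstract logic (its occurrence number is at most $\max(o(\cL),\beta^+)$, a cardinal). The key computation is that no infinite cardinal $\kappa\le\beta$ is a weak compactness cardinal of $\cL^*$. To see this, for each such $\kappa$ take the language $\tau_\kappa$ with distinct constants $\{c_\xi\mid\xi<\kappa\}$ together with an extra constant $d$, and consider the $\bL_{\beta^+,\omega}(\tau_\kappa)$-theory
$$T_\kappa=\{\textstyle\bigvee_{\xi<\kappa}(d=c_\xi)\}\cup\{d\neq c_\xi\mid\xi<\kappa\}.$$
This is the classical witness to the failure of compactness for infinitary logic, suitably generalized: $T_\kappa$ has size $\kappa$ and is unsatisfiable, yet any subtheory of size ${<}\kappa$ omits some inequality $d\neq c_\eta$ and is satisfied by interpreting $d$ as $c_\eta$, so $T_\kappa$ is ${<}\kappa$-satisfiable. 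Note that $\bigvee_{\xi<\kappa}(d=c_\xi)$ lies in $\bL_{\beta^+,\omega}$ precisely because $\kappa\le\beta$, and hence in $\cL^*(\tau_\kappa)$. Since weak compactness cardinals are infinite, it follows that every weak compactness cardinal of $\cL^*$ exceeds $\beta$.

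Finally I would apply the standing hypothesis to $\cL^*$, obtaining a weak compactness cardinal $\kappa$ for $\cL^*$. By the preceding paragraph $\kappa>\beta$, and by the monotonicity principle (as $\cL\subseteq\cL^*$) $\kappa$ is also a weak compactness cardinal of $\cL$. As $\beta$ was arbitrary, $\cL$ has weak compactness cardinals above every ordinal, i.e.\ unboundedly many. The conceptual heart of the argument—and the only step requiring genuine care, since weak compactness cardinals, unlike strong compactness cardinals, are not upward closed—is this transfer step: one must design the auxiliary logic $\cL^*$ to simultaneously force the witnessing cardinal upward (via the theories $T_\kappa$) and to extend $\cL$, so that monotonicity hands back a weak compactness cardinal of $\cL$ itself. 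Everything else is routine.
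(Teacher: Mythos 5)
Your proposal is correct and is essentially the paper's own argument: the paper obtains a larger weak compactness cardinal for $\cL$ by applying the hypothesis to the union logic $\cL\cup\bL_{\kappa^+,\omega}$ (where $\kappa$ is a given weak compactness cardinal), exactly your transfer-via-monotonicity idea with $\beta^+$ replaced by $\kappa^+$. Your write-up merely makes explicit the two steps the paper leaves implicit, namely the monotonicity principle and the classical ${<}\kappa$-satisfiable but unsatisfiable infinitary theory forcing the new cardinal upward.
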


\begin{proof}
If $\kappa$ is a weak compactness cardinal for $\cL$, then we can find a larger weak compactness cardinal for $\cL$ by finding a weak compactness cardinal for $\cL \cup \bL_{\kappa^+,\omega}$.
\end{proof}

We start the proof of the main theorem of this section with the following proposition asserting that the ordinals $\alpha<\beta$ given by subtleness can be assumed to be regular cardinals.

\begin{prop}\label{prop:subtle}
Assume ${\rm GBC}$. If ${\rm Ord}$ is subtle and $C$ is a class club, then for any class sequence $\seq{A_\xi\mid \xi\in{\rm Ord} }$ with $A_\xi\subseteq \xi$, there are regular cardinals $\alpha < \beta$ in $C$ such that $A_\alpha = A_\beta \cap \alpha$.
\end{prop}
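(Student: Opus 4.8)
The plan is to derive the proposition from the bare subtlety of ${\rm Ord}$ (Definition \ref{def-subtle}) by augmenting the given data $\langle A_\xi\mid\xi\in{\rm Ord}\rangle$ and $C$ so that any coherent pair produced by subtlety is automatically a pair of regular cardinals. First I would reduce to the case that every element of $C$ is an infinite cardinal: the class $\Card$ of infinite cardinals is a class club (a limit of cardinals is a cardinal), and in ${\rm GBC}$ the intersection of two class clubs is again a class club, so replacing $C$ by $C\cap\Card$ is harmless. From now on all members of $C$ are cardinals.

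Next, to compress auxiliary data into a subset of $\xi$, I would fix the G\"odel pairing function $\Gamma:{\rm Ord}\times{\rm Ord}\to{\rm Ord}$, using its key closure property that for every infinite cardinal $\gamma$ the restriction $\Gamma\restriction(\gamma\times\gamma)$ is a bijection onto $\gamma$, and moreover $\Gamma(\eta,\zeta)<\gamma$ if and only if $\eta,\zeta<\gamma$. Using global choice, I would attach to each cardinal $\xi$ a canonical \emph{regularity certificate} $e_\xi$: a continuous strictly increasing cofinal map $e_\xi:\cf(\xi)\to\xi$, taken to be the identity exactly when $\xi$ is regular. I would then define $A'_\xi = \Gamma''\big((\{0\}\times A_\xi)\cup(\{1\}\times\mathrm{graph}(e_\xi))\big)\subseteq\xi$, which by the closure property of $\Gamma$ is indeed a subset of $\xi$.

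Applying subtlety of ${\rm Ord}$ to $\langle A'_\xi\rangle$ and $C$ gives cardinals $\alpha<\beta$ in $C$ with $A'_\alpha = A'_\beta\cap\alpha$. Decoding through $\Gamma$ (again using that $\Gamma$ sends $\alpha\times\alpha$ exactly onto $\alpha$), this single coherence splits into two: $A_\alpha = A_\beta\cap\alpha$, which is precisely the conclusion we want for the original sequence, and $\mathrm{graph}(e_\alpha) = \mathrm{graph}(e_\beta)\cap(\alpha\times\alpha)$. Since $e_\beta$ is increasing, the latter says $e_\alpha = e_\beta\restriction\nu$ where $\nu$ is least with $e_\beta(\nu)\geq\alpha$; comparing domains forces $\nu=\cf(\alpha)$, and by continuity $\alpha = e_\beta(\nu)$. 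The easy half is now visible: if $\beta$ is regular then $e_\beta$ is the identity, its trace below $\alpha$ is the identity on $\alpha$, so $e_\alpha$ is the identity and $\alpha$ is regular as well.

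The main obstacle is therefore to rule out a singular top witness $\beta$. In that case the displayed equation exhibits $\alpha=e_\beta(\nu)$ as a limit point of $\mathrm{range}(e_\beta)$ reached at stage $\nu=\cf(\alpha)$ with $e_\beta\restriction\nu=e_\alpha$; that is, the certificate of $\beta$ restricts to the certificate of $\alpha$ at one of its limit points. I would build the certificates by recursion on ${\rm Ord}$ (this is where global choice is essential) so that this can never happen: when defining $e_\beta$ I would approach each of its limit points $\alpha$ by a cofinal sequence deliberately chosen to differ from the already-fixed $e_\alpha$ — either by arriving at $\alpha$ through a stage whose order type differs from $\cf(\alpha)$, producing a domain mismatch, or, when the stage is forced to have order type $\cf(\alpha)$, by diagonalizing a single value away from $e_\alpha$. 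Carrying out this diagonalization coherently against all limit points at once, uniformly across all cofinalities, is the delicate combinatorial heart of the argument; once it is in place no singular $\beta$ can satisfy $e_\beta\restriction\nu=e_\alpha$, so the pair $\alpha<\beta$ returned by subtlety must consist of regular cardinals, giving the desired conclusion.
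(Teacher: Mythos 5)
Your reduction to a club of cardinals, the G\"odel-pairing coding, and the decoding step that turns one instance of subtlety into the two coherences $A_\alpha=A_\beta\cap\alpha$ and $\mathrm{graph}(e_\alpha)=\mathrm{graph}(e_\beta)\cap(\alpha\times\alpha)$ are all correct, as is the easy case where $\beta$ is regular. The gap is exactly where you flag it: ruling out a singular $\beta$ is not deferrable bookkeeping, and the diagonalization as you describe it cannot be carried out. When you build $e_\beta$ by recursion, the limit point $\alpha=\sup_{\eta<\nu}e_\beta(\eta)$ --- and hence the function $e_\alpha$ you must avoid --- only becomes known at stage $\nu$, by which time $e_\beta\restriction\nu$ is already completely determined; you can neither anticipate it at earlier successor stages nor repair it afterwards, since continuity forces $e_\beta(\nu)=\alpha$. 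Nor can you ``arrive at $\alpha$ through a stage of different order type'': the stage $\nu$ at which the supremum reaches $\alpha$ automatically satisfies $\mathrm{cf}(\nu)=\mathrm{cf}(\alpha)$, and, for instance, stage $\omega$ always produces a limit point of cofinality $\omega$, so the domain mismatch is simply not available there. As written, the essential case is unproved.

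The missing idea is to make the coherence itself transfer $\mathrm{cf}(\beta)$ down to $\alpha$, so that a singular top witness dies for reasons of length rather than by diagonalization. This is what the paper does: for singular $\alpha$ it codes both the pair $(0,\mathrm{cf}(\alpha))$ and the tagged range of a chosen cofinal sequence $s^{(\alpha)}$ of length $\mathrm{cf}(\alpha)$, while regular $\alpha$ contributes only pairs with a third tag; coherence then forces $\alpha$ and $\beta$ to have the same type, and if both were singular it gives $\mathrm{cf}(\alpha)=\mathrm{cf}(\beta)$ together with the statement that $\mathrm{range}(s^{(\alpha)})$, of order type $\mathrm{cf}(\beta)$, equals $\mathrm{range}(s^{(\beta)})\cap\alpha$, a proper initial segment of $s^{(\beta)}$ of order type strictly less than $\mathrm{cf}(\beta)$ --- a contradiction. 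You can graft the same idea onto your certificates with one change: for singular $\beta$ let every successor value of $e_\beta$ encode $\mathrm{cf}(\beta)$, e.g.\ $e_\beta(\eta+1)=\Gamma(\mathrm{cf}(\beta),\delta_\eta)$ for a suitable increasing cofinal sequence $\delta_\eta>\mathrm{cf}(\beta)$ (this stays below $\beta$, and is increasing and cofinal, because $\beta$ is a cardinal closed under $\Gamma$). Then $e_\alpha=e_\beta\restriction\nu$ rules out regular $\alpha$ (the identity would need $e_\beta(1)=1$, impossible since $\Gamma(\mathrm{cf}(\beta),\delta_0)\geq\mathrm{cf}(\beta)\geq\omega$), and for singular $\alpha$ decoding any common successor value gives $\mathrm{cf}(\alpha)=\mathrm{cf}(\beta)$, whence $\nu=\mathrm{dom}(e_\alpha)=\mathrm{cf}(\beta)>\nu$, a contradiction. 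With that single modification your argument closes, and it is then essentially the paper's proof in different clothing.
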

The proof is an easy modification of the proof of \cite[Theorem 3.6.3]{ahkz-flip}.
\begin{proof}
 By shrinking $C$ if necessary, we can suppose that $C$ consists only of cardinals.  First, we define an auxiliary sequence $\seq{B_\alpha\mid  \alpha\in C}$ by:
\begin{enumerate}
	\item If $\alpha$ is singular with cofinality $\mu$, then we fix a cofinal sequence $s^{(\alpha)}$ of length $\mu$ in $\alpha$ and define
	$$B_\alpha= \{(0,\mu), (1, s^{(\alpha)}_\xi) \mid \xi < \mu\}.$$
	\item If $\alpha$ is regular, then
	$$B_\alpha = \{(2, \xi) \mid \xi \in A_\alpha\}.$$
\end{enumerate}
Note that we make use here of global choice to pick the cofinal sequences $s^{(\alpha)}$. By applying subtlety to the club $C$ and the sequence $\seq{B_\alpha \mid \alpha \in C}$, there are $\alpha < \beta$ from $C$ such that $B_\alpha = B_\beta \cap \alpha$.  If $\beta$ is regular, then $\alpha$ must be regular because $B_\alpha$ consists of pairs of the form $(2, \xi)$.  So suppose $\beta$ is singular. Then $\alpha$ must also be singular and $(0, \cf\beta) \in B_\alpha$.  So $\cf \alpha = \cf \beta$.  It follows that $\seq{ \nu \mid (1, \nu) \in B_\alpha}$ is a $\cf \beta$-sequence  cofinal in $\alpha$ that is end-extended to a $\cf \beta$-sequence cofinal in $\beta$, but this is impossible.  So $\alpha$ and $\beta$ are both regular and also, $A_\alpha = A_\beta \cap \alpha$ by the definition of the $B_\alpha$-sequence.
\end{proof}

\begin{theorem}\label{th:subtle}
The following are equivalent over ${\rm GBC}$.
\begin{enumerate}
	\item ${\rm Ord}$ is subtle.
	\item Every logic has a stationary class of weak compactness cardinals.
\end{enumerate}
\end{theorem}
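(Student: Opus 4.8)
The plan is to prove the two implications separately, in both cases using Proposition~\ref{prop:subtle}, the clubs $C^{(n)}$, and global choice to produce coherent codings. For $(1)\Rightarrow(2)$, fix a logic $\cL$ and an arbitrary class club $D$; I must find a weak compactness cardinal for $\cL$ inside $D$. Let $F\colon V\to{\rm Ord}$ be the class bijection induced by the global well-order, and let $C\subseteq D$ be the class club of cardinals $\alpha$ that are closure points of $\cL$, are $\beth$-fixed points, lie in $D$, and satisfy $F\image V_\alpha=\alpha$ (this last being a club condition since $F$ is a bijection). Suppose toward a contradiction that no $\alpha\in C$ is a weak compactness cardinal for $\cL$. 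Then, using global choice, for each $\alpha\in C$ I pick an $\cL$-theory $T_\alpha$ of size $\alpha$ with $T_\alpha\subseteq V_\alpha$ (after renaming the language into $V_\alpha$) that is ${<}\alpha$-satisfiable but unsatisfiable; set $A_\alpha=F\image T_\alpha\subseteq\alpha$, and $A_\alpha=\emptyset$ off $C$. Applying Proposition~\ref{prop:subtle} to $C$ and $\langle A_\alpha\rangle$ yields regular cardinals $\alpha<\beta$ in $C$ with $A_\alpha=A_\beta\cap\alpha$. Since $F\image V_\alpha=\alpha$ and $F$ is injective, decoding gives $T_\alpha=T_\beta\cap V_\alpha$, so $T_\alpha\subseteq T_\beta$ is a sub-theory of size $\alpha<\beta$; as $T_\beta$ is ${<}\beta$-satisfiable, $T_\alpha$ is satisfiable, contradicting its choice. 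Hence some $\alpha\in C\subseteq D$ is a weak compactness cardinal, and since $D$ was arbitrary, these form a stationary class.

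For $(2)\Rightarrow(1)$, fix a class club $C$ and a class sequence $\langle A_\xi\mid\xi\in{\rm Ord}\rangle$ with $A_\xi\subseteq\xi$, and seek $\alpha<\beta$ in $C$ with $A_\alpha=A_\beta\cap\alpha$. Consider the logic $\cL$ obtained from $\bL^2$ by adding predicates $P,R$ and a correctness sentence $\theta$ whose satisfaction is defined from the class parameters $C$ and $\vec A$, so that a well-founded model of $\Phi\wedge\theta$ is forced, via Magidor's $\Phi$, to be some $(V_\beta,\in,C\cap V_\beta,\{(\gamma,\eta):\eta\in A_\gamma\})$; one checks this respects the isomorphism and renaming axioms, hence is a legitimate abstract logic. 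Using stationarity, pick a weak compactness cardinal $\kappa$ for $\cL$ (with the naming apparatus below) lying in $C$, a $\beth$-fixed point and in $C^{(n)}$ for large $n$. In a language with constants $c_\xi$ $(\xi<\kappa)$, an auxiliary $e$, and a constant $d$, let $T$ assert: $\Phi$ and $\theta$; that each $c_\xi$ names the ordinal $\xi$; $P(d)$; $e>c_\xi$ for every $\xi<\kappa$ and $d>e$; and, for each $\xi<\kappa$, the sentence $R(d,c_\xi)$ if $\xi\in A_\kappa$ and $\neg R(d,c_\xi)$ otherwise. Thus $|T|=\kappa$.

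The crucial point is that $T$ is ${<}\kappa$-satisfiable: any sub-theory of size ${<}\kappa$ mentions only boundedly many $c_\xi$ (as $\kappa$ is regular), so it is satisfied in a genuine $(V_\gamma,\in,C\cap V_\gamma,\dots)$ with $\gamma\in C$ above $\kappa$, by interpreting each $c_\xi$ as $\xi$, interpreting $e$ as a bounded ordinal below $\kappa$, and interpreting $d$ as $\kappa$ itself, which makes every match sentence in the piece trivially true and $P(d)$ hold since $\kappa\in C$. By weak compactness, $T$ has a model $M$. Correctness forces $M\cong(V_\beta,\in,C\cap V_\beta,\vec A\restriction\beta)$; the sentences $e>c_\xi$ for \emph{all} $\xi<\kappa$ together with $d>e$ force $d^{M}=\delta>\kappa$, while $P(d)$ gives $\delta\in C$; and the match sentences now read $A_\delta\cap\kappa=A_\kappa$. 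Since $\kappa\in C$ as well, the pair $(\kappa,\delta)$ witnesses subtlety.

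I expect the main obstacle to be the naming: forcing $c_\xi=\xi$ requires naming each ordinal below $\kappa$, available only in $\bL_{\kappa,\omega}$, so a priori the logic to which the hypothesis is applied depends on $\kappa$. The key technical step is therefore to decouple this, either by arranging a single \emph{fixed} logic (e.g.\ $\bL^2$, or sort logic, together with $\theta$) to suffice by internalizing the infinitary naming sentences through the internal satisfaction predicate of the pinned-down model $V_\beta$, or else by showing directly that the class of $\kappa$ which are weak compactness cardinals for $\bL^2\cup\bL_{\kappa,\omega}+\theta$ is stationary, exploiting that $\mu\mapsto\{\text{weak compactness cardinals for }\bL^2\cup\bL_{\mu,\omega}+\theta\}$ is decreasing in $\mu$. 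Secondary care is needed in $(1)\Rightarrow(2)$ to guarantee that the codings cohere, that is $F\image V_\alpha=\alpha$ on a club, so that the reflection $A_\alpha=A_\beta\cap\alpha$ decodes to the genuine inclusion $T_\alpha=T_\beta\cap V_\alpha$.
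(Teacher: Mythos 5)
Your direction $(1)\Rightarrow(2)$ is essentially the paper's argument (global choice picks counterexample theories, which are coded as sets of ordinals and reflected by Proposition~\ref{prop:subtle}), and your coding club $F\image V_\alpha=\alpha$ is a reasonable substitute for the paper's coding arrangements. But you gloss over a point the paper treats carefully: you rename each language into $V_\alpha$ \emph{independently} for each $\alpha$, so nothing guarantees that $\tau_\alpha$ and $\tau_\beta$ are compatible (the same set could be a constant symbol of $\tau_\alpha$ and a binary relation symbol of $\tau_\beta$). Subtlety gives the literal set equality $T_\alpha=T_\beta\cap V_\alpha$, and ${<}\beta$-satisfiability of $T_\beta$ then produces a $\tau_\beta$-structure satisfying these sentences; but in the abstract framework satisfaction is tied together across languages only via inclusion (expansion property) or renaming, so a $\tau_\beta$-model of these sentences need not yield a $\tau_\alpha$-model, and no contradiction with unsatisfiability of $T_\alpha$ \emph{as an $\cL(\tau_\alpha)$-theory} follows. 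The paper closes exactly this hole by taking each $\tau_\alpha$ to be the ``maximal'' language of size $\alpha$, so the $\tau_\alpha$ form an increasing continuous chain and $T_\alpha$ is literally a subtheory of $T_\beta$ in a common language. This is patchable in your setup, so I count it as a repairable omission.

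The genuine gap is in $(2)\Rightarrow(1)$, and it is precisely the obstacle you flag but do not resolve: your theory needs the sentences ``$c_\xi$ names $\xi$'' for every $\xi<\kappa$, available only in $\bL_{\kappa,\omega}$, whereas hypothesis (2) must be applied to a logic fixed \emph{before} $\kappa$ is produced. Neither proposed repair works. Pinning $c_\xi=\xi$ cannot be internalized into a $\kappa$-independent logic: with finitary sentences you can force $c_{\xi+1}=c_\xi+1$, but at limit stages any order-preserving reinterpretation with jumps satisfies the same theory, and a schema of sentences ``$c$ denotes $\xi$'' for arbitrary ordinals $\xi$ would make $\cL(\tau)$ a proper class in a fixed language, which the framework forbids. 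As for the diagonal idea, a decreasing ${\rm Ord}$-indexed sequence of stationary classes can have empty diagonal (take $W_\mu$ to be the class of all cardinals above $\mu$), so ``$\mu\mapsto W_\mu$ is decreasing'' does not yield stationarity of $\{\kappa\mid\kappa\in W_\kappa\}$, and deriving that stationarity from subtlety would be circular. The paper's proof contains the idea your proposal is missing: the logic $\bL^{C,A}$ is first-order logic plus a \emph{single} pinning sentence $\Psi$, true exactly in copies of the structures $\bA_\beta=\seq{V_\beta,\in,D_\beta,C\cap\beta,A_\beta}$ where $D_\beta$ codes \emph{all} smaller $A_\gamma$; the theory at $\kappa^*$ is just $\Psi$ plus the first-order elementary diagram of $\bA_{\kappa^*}$ plus ``$c\neq c_\beta$'', so the resulting model only gives an elementary embedding $\pi:\bA_{\kappa^*}\to\bA_\delta$ that is \emph{not} required to be the identity. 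One then argues by cases: if $\pi$ is the identity, read off $A_\delta\cap\kappa^*=A_{\kappa^*}$; otherwise show the critical point $\eta$ of $\pi$ lies in $C$ (via Kunen's inconsistency, using that successors in $C$ were thinned to have uncountable cofinality) and extract the subtlety witness at the pair $\eta<\pi(\eta)$ from the predicate $D_{\kappa^*}$. Without this critical-point mechanism, or a genuinely new way around the circularity, your proof of $(2)\Rightarrow(1)$ does not go through.
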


\begin{proof}
First, suppose that ${\rm Ord}$ is subtle and fix a logic $\cL$ with occurrence number $o(\cL)=\lambda$. We will assume towards a contradiction that there is a class club $C_\cL$ of cardinals such that no element of $C_{\cL}$ is weakly compact for $\cL$. Using global choice, we can choose, for every $\alpha \in C_\cL$, a language $\tau_\alpha$ and an $\cL(\tau_\alpha)$-theory $T_\alpha$ of size $\alpha$ that is ${<}\alpha$-satisfiable, but not satisfiable.

We can assume that the first element of $C$ is very high above $\lambda$, so that each $\tau_\alpha$ has size at most $\alpha$. But then we can further assume that each $\tau_\alpha$ is the `maximal' language of size $\alpha$, having relations $R^n_\xi$ of arity $n$, functions $F^n_\xi$ of arity $n$, and constants $c_\xi$, for $n<\omega$ and $\xi<\alpha$. Thus, in particular, we have that $\tau_\alpha$ extends $\tau_\beta$ for $\beta<\alpha$, and that the sequence of the $\tau_\alpha$ is continuous. This, in turn, gives that the sequence of the $\cL(\tau_\alpha)$ is increasing and continuous at cardinals of cofinality $\geq\lambda$. Let's code the sentences of $\cL$ by ordinals and from now on associate elements of $\cL$ with their ordinal codes. Now observe that cardinals $\alpha$ such that $\cL(\tau_\alpha)\subseteq\alpha$ form an unbounded class $D$ that is closed under sequences of cofinality $\geq\lambda$. It is easy to find $\alpha$ such that $\bigcup_{\xi<\alpha}\cL(\tau_\xi)\subseteq\alpha$ and if $\cf \alpha\geq\lambda$, then, using $o(\cL)=\lambda$ and the continuity of the $\tau_\alpha$ sequence, $\bigcup_{\xi<\alpha}\cL(\tau_\xi)=\cL(\tau_\alpha)$. Let $\bar D$ be $D$ together with all its limit points of small cofinality, which is easily seen to be a club. Finally, apply Proposition~\ref{prop:subtle} to the club $C\cap\bar D$ and the sequence $\langle T'_\alpha\mid \alpha\in C\cap\bar D\rangle$, where $T'_\alpha=T_\alpha$ for regular $\alpha$ and $T'_\alpha=T_\alpha\cap \alpha$ otherwise, to obtain regular $\alpha<\beta$ such that $T_\alpha=T_\beta\cap\alpha$. But this is impossible because we assumed both that $T_\beta$ is ${<}\beta$-satisfiable and that $T_\alpha$ is not satisfiable.

Second, suppose that every logic $\cL$ has a stationary class of weak compactness cardinals.  Fix a class club $C$ of ordinals and a class sequence $ A = \seq{A_\xi\mid\xi\in{\rm Ord}}$ with $A_\xi\subseteq \xi$.  We can thin $C$ out to assume that $|V_\alpha|=\alpha$ for every $\alpha \in C$ and that every successor in $C$ has uncountable cofinality.  For each $\alpha \in C$, define the associated structure $$\bA_\alpha = \seq{V_\alpha, \in, D_\alpha, C \cap \alpha, A_\alpha},$$ where $D_\alpha = \{ (\gamma, \delta) \in \alpha\times\alpha \mid \delta \in A_\gamma\}$ codes all smaller $A_\gamma$.

Let $\tau$ be the language consisting of one binary relation and three unary relations, so that in particular $\bA_\alpha$ are $\tau$-structures. We define a logic $\bL^{C, A}$ which extends first-order logic by adding a sentence $\Psi$ such that a $\tau$-structure $\seq{M, E, J, K, L} \vDash \Psi$ if and only if there is some $\beta \in C$ such that $\seq{M, E, J, K, L}$ is isomorphic to $\bA_\beta$.  Let $\tau_\alpha$ be the language $\tau$ extended by adding constants $\{c_x\mid x\in V_\alpha\}\cup\{c\}$. For each $\alpha \in C$, define the $\bL^{C, A}(\tau_\alpha)$-theory
$$T_\alpha=\{\Psi\} \cup {\rm ED}(\bA_\alpha,c_x)_{x\in V_\alpha} \cup \{c \neq c_\beta \mid \beta < \alpha\},$$
where each constant $c_x$ is interpreted as $x$.  The theory $T_\alpha$ has size $\alpha$ and is ${<}\alpha$-satisfiable (in an expansion of $\bA_\alpha$).

By assumption, there is some limit point $\kappa^*$ of $C$ that is weakly compact for $\bL^{C, A}$.  So, in particular, $T_{\kappa^*}$ has some model $\cM$.  By definition of $\Psi$, $\cM \rest \tau$ is isomorphic to some $\bA_\delta$.  Since $\bA_\delta$ models ${\rm ED}(\bA_{\kappa^*},c_x)_{x\in V_{\kappa^*}}$, this induces an elementary embedding $\pi:\bA_{\kappa^*} \to \bA_\delta$.  The interpretation of $c$ witnesses that $\pi$ is not onto the ordinals of $\bA_\delta$, so let $\eta < \delta$ be the minimum ordinal not hit.  Note that $\eta \leq \kappa^*$.

If $\eta = \kappa^*$, then $\pi$ is the identity and $\bA_{\kappa^*} \prec \bA_\delta$.  Since the sequence $A$ is coded in the language, this means $A_\delta \cap \kappa^* = A_{\kappa^*}$.

If $\eta<\kappa^*$, then $\crit \pi=\eta$.  We claim that $\eta \in C$.  If not, then we can find $\nu, \mu \in C$ so $\nu$ is the largest member of $C$ below $\eta$ and $\mu$ is the smallest member of $C$ above it.  Since $\kappa^*$ is a limit member of $C$, we have
$$\nu < \eta < \mu < \kappa^*.$$
We know that $\mu$ is definable from $\nu$ (being its successor in $C$), so $\pi$ fixes $\mu$ since it fixes $\nu$.  It follows that $\pi \rest V_\mu$ is a nontrivial elementary embedding of $V_\mu$ into itself, which violates Kunen's Inconsistency, since by our assumption on successor elements of $C$, $\mu$ has uncountable cofinality. Thus, $\eta \in C$, which means $\pi(\eta) \in C$.  Applying the elementary embedding to the predicate $D_{\kappa^*}$, we have that $D_\eta = D_{\pi(\eta)} \cap\eta$, as desired.
\end{proof}

Now we revisit the caveats of this theorem.  First, we note that the statement about stationarily many weak compactness cardinals is not equivalent to the corresponding statement about existence.

\begin{theorem}\label{thm-not-stat}
It is consistent that every logic has a weak compactness cardinal, but not a stationary class of weak compactness cardinals.
\end{theorem}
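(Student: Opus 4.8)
The plan is to exhibit a single model of ${\rm GBC}$ in which every logic has a weak compactness cardinal while ${\rm Ord}$ is \emph{not} subtle; by Theorem~\ref{th:subtle} the latter immediately yields that not every logic has a \emph{stationary} class of weak compactness cardinals, which is exactly the desired separation. The existence of weak compactness cardinals will be supplied by virtual \Vopenka's principle through Corollary~\ref{cor:WeakCompactnessVopenka}, so the whole problem reduces to separating virtual \Vopenka's principle from the subtlety of ${\rm Ord}$.

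For this I would invoke the Gitman--Hamkins construction (see \cite{gh-genvop}) of a model $M$ of ${\rm GBC}$ satisfying virtual (generic) \Vopenka's principle in which ${\rm Ord}$ is not Mahlo, i.e. in which there is a class club consisting of singular cardinals. Working inside $M$, there are two things to read off. First, by Corollary~\ref{cor:WeakCompactnessVopenka} every logic $\cL$ has a weak compactness cardinal (and indeed, by the observation that existence of one such cardinal yields unboundedly many, a proper class of them). Second, ${\rm Ord}$ cannot be subtle in $M$: after intersecting the given class club with the club of limit cardinals we obtain a class club all of whose members are singular, hence a class club containing \emph{no} regular cardinal whatsoever. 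This flatly contradicts Proposition~\ref{prop:subtle}, which, under the assumption that ${\rm Ord}$ is subtle, would produce regular ordinals $\alpha<\beta$ lying inside any prescribed class club (applied here with the trivial sequence $A_\xi=\emptyset$). Therefore ${\rm Ord}$ is not subtle in $M$.

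Finally, I would apply the contrapositive of the implication $(1)\Rightarrow(2)$ of Theorem~\ref{th:subtle} in $M$: the failure of ``${\rm Ord}$ is subtle'' gives that it is not the case that every logic has a stationary class of weak compactness cardinals. Combined with the first point, $M$ witnesses that it is consistent for every logic to have a weak compactness cardinal while the weak compactness cardinals of some logic fail to be stationary, which is the statement of the theorem.

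The main obstacle is essentially bookkeeping rather than new mathematics: one must verify that the model produced in \cite{gh-genvop} is genuinely a model of ${\rm GBC}$ with global choice, so that both Theorem~\ref{th:subtle} and Corollary~\ref{cor:WeakCompactnessVopenka} apply there verbatim, and that the principle forced in that model is precisely the virtual \Vopenka's principle used in Corollary~\ref{cor:WeakCompactnessVopenka}. The only genuinely combinatorial point — that ``${\rm Ord}$ is subtle'' implies ``${\rm Ord}$ is Mahlo'' — is already built into Proposition~\ref{prop:subtle}, whose argument forces the witnessing ordinals to be regular; beyond intersecting with the club of limit cardinals, no further work is needed.
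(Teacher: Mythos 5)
Your proposal is correct and follows essentially the same route as the paper's proof: the Gitman--Hamkins model of virtual \Vopenka's principle in which ${\rm Ord}$ is not Mahlo, Corollary~\ref{cor:WeakCompactnessVopenka} to get a weak compactness cardinal for every logic, and Theorem~\ref{th:subtle} to rule out stationarity. Two points deserve attention, however. First, a labeling slip: the implication you need is the contrapositive of $(2)\Rightarrow(1)$ of Theorem~\ref{th:subtle} (failure of subtlety yields that some logic lacks a stationary class of weak compactness cardinals), not the contrapositive of $(1)\Rightarrow(2)$, which would run in the useless direction. Second, and more substantively, your detour through Proposition~\ref{prop:subtle} to obtain ``${\rm Ord}$ subtle implies ${\rm Ord}$ Mahlo'' imports a use of global choice, since the proof of Proposition~\ref{prop:subtle} must choose cofinal sequences through a proper class of singular cardinals; consequently your argument needs the model of \cite{gh-genvop} to satisfy ${\rm GBC}$ with global choice, which is exactly the verification you defer as ``bookkeeping.'' The paper arranges the argument precisely so that no such verification is needed: it argues directly that if every logic had a stationary class of weak compactness cardinals, then ${\rm Ord}$ would be subtle and hence Mahlo, noting that this direction does not require global choice, so the conclusion holds in the Gitman--Hamkins model irrespective of the status of global choice there. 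Your version does go through because the construction in \cite{gh-genvop} is carried out over ${\rm GBC}$, but given that the failure of global choice is a genuine phenomenon in this setting (see Theorem~\ref{th:globalChoiceCE}), the paper's arrangement is the more robust one, and you should not dismiss that point as mere bookkeeping.
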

\begin{proof}
The third author and Hamkins showed in \cite{gh-genvop} that there is a model of virtual Vopenka's principle in which ${\rm Ord}$ is not Mahlo.  Theorem~\ref{th:subtle} shows that if every logic has a stationary class of weak compactness cardinals, then ${\rm Ord}$ is subtle and hence, in particular, Mahlo (note that this direction of the theorem does not require global choice). Thus, in a model where virtual \Vopenka's principle holds, but ${\rm Ord}$ is not Mahlo, every logic has a weak compactness cardinal, but every logic cannot have a stationary class of weak compactness cardinals.
\end{proof}

Second, we show that the use of global choice in Theorem \ref{th:subtle} is necessary.  Global choice is natural as we think about models of the form $V_\kappa$, but raises interesting technical questions.  We begin with a lemma showing that the first weak compactness cardinal of $\bL^2$ is a regular limit cardinal.

\begin{lemma}\label{lem-wc-second}
If $\kappa$ is the least weak compactness cardinal for $\bL^2$, then $\kappa$ is a regular limit cardinal.
\end{lemma}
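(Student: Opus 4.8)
The plan is to reduce the statement to the construction of small ``cardinality-bounding'' theories and then rule out $\kappa$ being a successor or singular by producing such theories from $\bL^2$-characterizations of the cardinals below $\kappa$.

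First I would record the governing reformulation. Call an $\bL^2$-theory $\Sigma$ a \emph{bounding scheme for $\kappa$} if $|\Sigma|<\kappa$, $\Sigma$ has models of cardinalities cofinal in $\kappa$, but $\Sigma$ has no model of size $\ge\kappa$. If such a $\Sigma$ exists, then in a language expanding that of $\Sigma$ by fresh constants $\langle c_\xi\mid \xi<\kappa\rangle$ the theory $\Sigma\cup\{c_\xi\ne c_\eta\mid \xi<\eta<\kappa\}$ has size $\kappa$, is ${<}\kappa$-satisfiable (a subtheory of size ${<}\kappa$ mentions fewer than $\kappa$ of the $c_\xi$, which can be placed distinctly in a sufficiently large model of $\Sigma$ below $\kappa$), yet is unsatisfiable. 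Hence it suffices to show that every successor and every singular $\kappa$ admits a bounding scheme. The expressive engine throughout will be Magidor's $\Phi$: forcing a model to be (isomorphic to) some transitive $(V_\beta,\in)$ lets me phrase assertions about cardinality, successor cardinals, and cofinalities of \emph{definable} ordinals as first-order statements interpreted in $V_\beta$, and lets me bound the size of the real universe by demanding a surjection from it onto such a definable ordinal, all inside finitary $\bL^2$.

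The core of the argument is then the claim that, when $\kappa$ is the \emph{least} weak compactness cardinal, every $\lambda<\kappa$ is \emph{$\bL^2$-characterizable}, i.e.\ there is a theory of size $\le\lambda$ whose models are exactly the structures of size $\le\lambda$. Granting this, the two cases are quick. If $\kappa=\lambda^+$, the characterization of ``$\le\lambda$'' is itself a bounding scheme for $\kappa$ (its models have size $\le\lambda$, cofinal in $\lambda^+$, and none reach $\lambda^+$), so $\kappa$ is not weakly compact, a contradiction; hence $\kappa$ is a limit cardinal. If $\kappa$ is singular with $\cf(\kappa)=\mu<\kappa$, fix a cofinal sequence $\langle\lambda_i\mid i<\mu\rangle$; since $\kappa$ is a limit cardinal, each $\lambda_i$ and $\mu$ lie below $\kappa$ and are characterizable, and I would assemble from these data (inside a $\Phi$-model that internalizes the sequence $\langle\lambda_i\rangle$ indexed by a copy of $\mu$) a single scheme whose models are precisely those admitting an injection into some $\lambda_i$; its models have sizes cofinal in $\sup_i\lambda_i=\kappa$ and none of size $\kappa$, so it is a bounding scheme and $\kappa$ is again not weakly compact. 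Thus $\kappa$ is regular.

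The main obstacle is establishing the characterizability of the cardinals below $\kappa$ together with the singular assembly step, and doing so in genuinely finitary $\bL^2$. The naive direct route---forcing a definable ordinal to be regular while carrying a cofinal sequence of order type $\mu$---founders precisely because one cannot pin an infinite set of constants to be cofinal, or to exhaust an initial segment, without an infinitary conjunction, which $\bL^2$ does not provide. Routing everything through characterizable cardinals sidesteps this, but now the work is to show that the characterizable cardinals are closed under successor (by relativizing a characterization of ``$\le\lambda$'' to the proper initial segments of a well-order, which forces order type $\le\lambda^+$) and, using the characterizability of $\mu$, under the relevant cofinal suprema, and to verify that each assembled scheme really is ${<}\kappa$-satisfiable with models of unbounded size below $\kappa$. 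The interaction between the leastness of $\kappa$ and characterizability is the delicate point, and I expect the bookkeeping there, rather than any single clever sentence, to constitute the bulk of the proof.
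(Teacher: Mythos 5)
Your reduction to ``bounding schemes'' is sound as far as it goes, but the entire argument rests on the central claim that, when $\kappa$ is the least weak compactness cardinal for $\bL^2$, every $\lambda<\kappa$ is characterizable by an $\bL^2$-theory of size $\le\lambda$ -- and this claim is neither proved nor plausibly reachable by the route you sketch. Leastness of $\kappa$ hands you, for each $\alpha<\kappa$, a ${<}\alpha$-satisfiable but unsatisfiable theory of size $\alpha$; that is a very different object from a theory pinning the cardinality $\alpha$, and you give no mechanism converting one into the other. Worse, the closure steps you outline fail at exactly the point you yourself flag. Only countably many cardinals are characterizable by single $\bL^2$-sentences (there are only countably many sentences over a finite vocabulary), so a characterization of a typical $\lambda<\kappa$ must use $\lambda$-many constants; but then the successor step -- ``relativize the characterization of $\le\lambda$ to the proper initial segments of a well-order'' -- breaks down, because the constants have fixed interpretations that cannot lie inside every proper initial segment, and finitary $\bL^2$ cannot force a predicate to coincide with the set of constants (that is precisely an infinitary conjunction). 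The singular assembly has the same circularity: internalizing $\langle\lambda_i\mid i<\mu\rangle$ inside a $\Phi$-model means pinning each ordinal $\lambda_i$, which is the characterizability problem all over again; and a theory containing a characterization of each $\lambda_i$ has size $\sup_i\lambda_i=\kappa$, violating your own requirement that a bounding scheme have size ${<}\kappa$.

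The paper's proof avoids cardinal characterizability entirely, and it is worth seeing how, since it shows what leastness actually buys. For each $\alpha<\kappa$ one fixes a counterexample theory $T_\alpha=\langle\sigma^\alpha_\xi\mid\xi<\alpha\rangle$ (coded by ordinals below $\alpha$), fixes $M\prec H_{\kappa^+}$ of size $\kappa$ containing the map $(\alpha,\xi)\mapsto\sigma^\alpha_\xi$, and writes a single ${<}\kappa$-satisfiable $\bL^2$-theory of size $\kappa$ consisting of: a fragment of ${\rm ZFC}$, Magidor's $\Phi$, the elementary diagram of $M$ interpreted in a transitive submodel, constants forcing a critical point $\le\kappa$, and reflection sentences $S_\alpha$ asserting that every proper initial segment of $T_\alpha$ has a model. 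A model of this theory yields an elementary embedding $j:M\to N$ with $N$ transitive; if $\crit j=\alpha<\kappa$, then since the sentences of $T_\alpha$ are coded below the critical point (hence fixed by $j$), the transferred sentence $S_\alpha$ would make $T_\alpha$ satisfiable, a contradiction -- so $\crit j=\kappa$, and standard arguments with such an embedding rule out $\kappa$ being a successor or being singular. In other words, the failure of weak compactness below $\kappa$ is exploited directly, through satisfiability-reflecting sentences and an embedding whose critical point is forced up to $\kappa$, rather than through any attempt to pin down the cardinals below $\kappa$.
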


\begin{proof}
Let $\kappa$ be the least weak compactness cardinal for $\bL^2$. Every cardinal $\alpha<\kappa$ is not a weak compactness cardinal for $\bL^2$, so fix a ${<}\alpha$-satisfiable $\bL^2(\tau_\alpha)$-theory $T_\alpha=\langle \sigma^\alpha_\xi\mid \xi<\alpha\rangle$ of size $\alpha$ which does not have a model. We can assume that the language $\tau_\alpha$ is coded by elements of $\alpha$, and so are the sentences $\sigma^\alpha_\xi$. Fix some $M\prec H_{\kappa^+}$ of size $\kappa$ so  $\sigma(\alpha,\xi)=\sigma^\alpha_\xi$ is a function in $M$, and note that $\sigma(\alpha,\xi)\in \alpha$ by our coding assumption. Let $\tau$ be the language consisting of a binary relation $\in$ and constants $\{c_x\mid x \in M\}\cup\{c\}\cup\{c_M\}$. Let $T$ be the following $\bL^2(\tau)$-theory:
\begin{enumerate}
\item A large fragment of ${\rm ZFC}$,
\item Magidor's $\Phi$,
\item $\{S_\alpha\mid\alpha<\kappa\}$, where $S_\alpha:=$ ``For every $\beta<c_\alpha$, the theory $\{c_\sigma(c_\alpha,c_\xi)\mid \xi<\beta\}$ has a model."
\item $\{c_\xi\neq c<c_\kappa\mid \xi<\kappa\}$,
\item $c_M\vDash {\rm ED}(M,c_x)_{x\in M}$ is a transitive submodel.
\end{enumerate}
The theory $T$ clearly has size $\kappa$ and is ${<}\kappa$-satisfiable. Thus, some $V_\lambda$ models $T$ and there is an elementary embedding $j:M\to N$, where $N$ is the interpretation of $c_M$ in $V_\lambda$, sending $x\in M$ to the interpretation of $c_x$. Suppose the critical point of $j$ is some $\alpha<\kappa$. For every $\beta<j(\alpha)$, the theory $\{j(\sigma)(j(\alpha),\xi)\mid\xi<\beta\}$ has a model in $V_\lambda$. So in particular, the theory $\{j(\sigma)(j(\alpha),\xi)\mid\xi<\alpha\}$ has a model. By elementarity, $j(\sigma)(j(\alpha),\xi)=j(\sigma(\alpha,\xi))$ for every $\xi<\alpha$, and thus, the theory $T_\alpha$ has a model, which is impossible since $V_\lambda$ is correct about this. Thus, $\crit j=\kappa$. But once we have an elementary embedding $j:M\to N$ with $\crit j=\kappa$ for some $M\prec H_{\kappa^+}$ of size $\kappa$,  standard arguments show that $\kappa$ must be a regular limit cardinal.
\end{proof}

\begin{theorem}\label{th:globalChoiceCE}
It is consistent that there is a model of ${\rm ZFC}$ in which:
\begin{enumerate}
\item ${\rm Ord}$ is subtle,
\item there is no definable global well-ordering,
\item $\bL^2$ does not have a weak compactness cardinal.
\end{enumerate}
\end{theorem}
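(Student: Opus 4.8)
The plan is to start from a ground model $W$ satisfying ${\rm GBC}$ (equivalently, a first-order ${\rm ZFC}$-model carrying a definable global well-ordering, such as $L$) in which ${\rm Ord}$ is subtle; its consistency comes from a subtle cardinal $\kappa$ by passing to $V_\kappa$ with classes $V_{\kappa+1}$. The first thing to record is that, since global choice holds in $W$, Theorem~\ref{th:subtle} hands $W$ a stationary class of weak compactness cardinals for $\bL^2$, so these are exactly the cardinals the forcing must destroy. Over $W$ I would force with a tame class forcing $\mathbb{P}$, taken as an Easton-support product $\prod_\gamma \mathbb{Q}_\gamma$ with $\gamma$ ranging over the inaccessible cardinals, where each $\mathbb{Q}_\gamma$ is a weakly homogeneous and sufficiently closed forcing designed to destroy weak compactness of $\bL^2$ at $\gamma$ while preserving the ordinary weak compactness and inaccessibility of $\gamma$. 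Such a $\mathbb{Q}_\gamma$ exists because, by Lemma~\ref{lem-wc-second} together with a standard analysis, being a weak compactness cardinal for $\bL^2$ is strictly stronger than being weakly compact (indeed it must hold, since a stationary class of weakly compacts is forced by ${\rm Ord}$ subtle and cannot all be removed), so the surplus reflection can be killed by adjoining a suitable non-reflecting object without disturbing weak compactness.

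Conclusion $(2)$ is then immediate. A parameter-free (or ordinal-definable) global well-ordering is equivalent to $V=\mathrm{HOD}$, and the weak homogeneity of each factor $\mathbb{Q}_\gamma$ guarantees that the generic objects it adjoins are not ordinal-definable in $V[G]$; hence $V[G]\neq\mathrm{HOD}$ and there is no definable global well-ordering.

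For conclusion $(1)$ I would show ${\rm Ord}$ remains subtle in $V[G]$. The essential point is that the forcing preserves every weakly compact cardinal, and, since subtlety is a $\Pi^1_1$ reflection property witnessed through weakly compact cardinals, it needs only these rather than the stronger $\bL^2$-compactness. Concretely, given a class club $C$ and a class sequence $\langle A_\alpha\mid\alpha\in{\rm Ord}\rangle$ in $V[G]$, I would capture them by class names, factor $\mathbb{P}\cong\mathbb{P}_{<\gamma}\times\mathbb{P}_{\geq\gamma}$ and use the high closure of the tail to reflect the relevant data into $W$, then invoke Proposition~\ref{prop:subtle} in $W$ to produce a coincidence $A_\alpha=A_\beta\cap\alpha$; closure of the tail ensures that no witness to a disagreement is added below the reflection point. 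For conclusion $(3)$, suppose toward a contradiction that $\bL^2$ had a weak compactness cardinal in $V[G]$, and let $\delta$ be the least. By Lemma~\ref{lem-wc-second}, $\delta$ is a regular limit cardinal, hence inaccessible, so $\delta$ is one of the stages at which $\mathbb{Q}_\delta$ planted a ${<}\delta$-satisfiable $\bL^2$-theory of size $\delta$ with no model; factoring at $\delta$ and using that $\mathbb{P}_{<\delta}$ is small (preserving ${<}\delta$-satisfiability) while $\mathbb{P}_{\geq\delta}$ is closed enough to add no model, the planted theory remains ${<}\delta$-satisfiable and unsatisfiable, contradicting that $\delta$ is a weak compactness cardinal for $\bL^2$.

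The main obstacle is precisely the tension between $(1)$ and $(3)$: because ${\rm Ord}$ subtle entails a stationary class of weakly compact cardinals, one cannot simply destroy all weak compactness to kill $\bL^2$-weak-compactness, but must excise exactly the surplus that $\bL^2$-weak-compactness carries over ordinary weak compactness. Isolating this surplus, and exhibiting a weakly homogeneous factor $\mathbb{Q}_\gamma$ that removes it while keeping $\gamma$ weakly compact, is the crux, and it is here that Lemma~\ref{lem-wc-second} is indispensable, since it identifies what such a cardinal must provide (an embedding $j\colon M\to N$ with $M\prec H_{\delta^+}$ and $\crit j=\delta$) and hence what $\mathbb{Q}_\gamma$ must obstruct. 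A secondary burden is the uniform bookkeeping showing that the single class product simultaneously preserves the subtle reflection of class sequences, kills $\bL^2$-weak-compactness at every inaccessible permanently (no resurrection by the tail), and delivers $V[G]\neq\mathrm{HOD}$.
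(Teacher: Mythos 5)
Your proposal has a genuine gap at its core: the forcing $\mathbb{Q}_\gamma$ that is supposed to destroy ``$\gamma$ is a weak compactness cardinal for $\bL^2$'' while preserving the weak compactness of $\gamma$ is never constructed; its existence is simply asserted, and the justification offered (``it must hold, since a stationary class of weakly compacts is forced by ${\rm Ord}$ subtle and cannot all be removed'') is circular --- the fact that your strategy \emph{needs} such a forcing is not evidence that one exists. Designing a weakly homogeneous, sufficiently closed poset that kills exactly the ``surplus'' of $\bL^2$-weak-compactness over ordinary weak compactness is precisely the hard part, and nothing in Lemma \ref{lem-wc-second} supplies it. Moreover, even granting such a $\mathbb{Q}_\delta$, your permanence argument fails: you claim the tail $\mathbb{P}_{\geq\delta}$ is ``closed enough to add no model'' of the planted theory, but closure only prevents the addition of \emph{small} sets. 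An unsatisfiable $\bL^2$-theory of size $\delta$ can in principle acquire models of arbitrarily large cardinality, and the tail adds unboundedly many new large structures; unsatisfiability of an $\bL^2$-theory is a global, non-absolute property, so closure alone cannot protect it. (A smaller issue: for conclusion (2), homogeneity plus ordinal-definability of the forcing rules out \emph{ordinal-definable} well-orderings, i.e., gives $V[G]\neq{\rm HOD}$, but a definable global well-order may use an arbitrary set parameter; excluding those requires more, which the paper gets by citing Hamkins' theorem that no extension by this kind of product has even a definable global linear order.)

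The paper avoids both problems by never attempting local surgery at each inaccessible. It works over $L$ (where ${\rm Ord}$ subtle may be assumed) and forces with the Easton product adding a Cohen subset to \emph{every} regular cardinal. Preservation of subtlety is proved by a names-plus-homogeneity argument, roughly as you envisioned for clause (1). But the failure of clause (3) is proved globally and uses $L$ essentially: if $\kappa$ were the least weak compactness cardinal for $\bL^2$ in $L[G]$, it is inaccessible by Lemma \ref{lem-wc-second}, and one writes down a ${<}\kappa$-satisfiable $\bL^2$-theory of size $\kappa$ --- the elementary diagram of $(L_\kappa,\in,G_\kappa)$, constants forcing a stretch of the ordinals, and second-order assertions that the predicate $D$ is a set of ordinals all of whose initial segments lie in $L$ while $D\notin L$ --- any model of which yields an elementary embedding $j:L_\kappa\to L_\beta$. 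If $j$ is nontrivial, one gets $0^\sharp$ in $L[G]$, impossible because the countably closed forcing adds no reals over $L$; if $j$ is the identity, one gets $G_\kappa\in L$, also impossible. The unsatisfiability here is absolute because it is anchored to the structure of $L$ itself, not to any feature that further forcing could disturb. Your plan would need an analogous absoluteness mechanism to make the ``planted'' theories stay unsatisfiable, and supplying one amounts to finding a proof along quite different lines from the one you sketch.
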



\begin{proof}
We work in $L$ and assume that the principle ${\rm Ord}$ is subtle holds there. Next, we force with the Easton-support class product $\p=\Pi_{\gamma\in\text{Reg}}\Add(\gamma,1)$, where $\Add(\gamma,1)$ is the forcing to add a Cohen subset to $\gamma$ with conditions of size less than $\gamma$. It will be convenient to assume that we skip stage $\omega$ and start with $\omega_1$. Let $L[G]$ be the resulting class forcing extension. Global choice fails in any forcing extension by $\p$ (the hypothesis $V=L$ plays no role here) for definable classes, and indeed, such a forcing extension cannot even have a definable global linear order \cite{hamkins:GlobalLinearOrder}. Fixing an ordinal $\delta$, let $$\p_{{\leq}\delta}=\Pi_{\gamma\in \text{Reg}\cap\delta+1}\Add(\gamma,1)$$ and let $$\p_{{>}\delta}=\Pi_{\gamma\in\text{Reg}\cap (\delta,{\rm Ord})}\Add(\gamma,1),$$ so that $\p=\p_{\leq\delta}\times \p_{{>}\delta}$. Correspondingly, let $G_{{\leq}\delta}=G\upharpoonright \p_{{\leq}\delta}$ and let $G_{{>}\delta}=G\upharpoonright \p_{{>}\delta}$, so that $G=G_{{\leq}\delta}\times G_{{>}\delta}$.

First, we argue that, in $L[G]$, we still have that ${\rm Ord}$ is subtle. To that end, fix a definable class club $C$ and a definable ${\rm Ord}$-length sequence $\vec A= \seq{A_\xi \mid \xi \in {\rm Ord}}$ with $A_\xi\subseteq \xi$ in $L[G]$.  We will reflect each of these to objects in $L$.  For $C$, suppose it is defined by the formula $\varphi(x,a)$ with parameter $a$. Let $\dot a$ be a $\p$-name such that $\dot a_G=a$. Clearly, there is an ordinal $\delta$ such that $\dot a$ is a $\p_{{\leq}\delta}$-name and $\dot a_{G_{{\leq}\delta}}=a$. We will argue that the club $C$ is definable already in $L[G_{{\leq}\delta}]$ by the formula $\psi(x,a):=\one_{\p_{{>}\delta}}\forces \varphi(x,\check a)$. The reason is that the forcing $\p_{{>}\delta}$ is weakly homogeneous (since it is a product of weakly homogeneous forcing notions) and therefore if a condition $p\forces\varphi(\check\alpha,\check a)$, then this must already be forced by $\one_{\p_{{>}\delta}}$. Next, let's observe that $C$ contains a club $\bar C$ that is definable in $L$. Let $\bar C=\{\alpha\in{\rm Ord}\mid \one_{\p_{{\leq}\delta}}\forces \psi(\check\alpha,\dot a)\}$. Clearly $\bar C$ is closed, so let's argue that it is unbounded. Choose some $\beta_0\gg\delta$. There must be a $\p_{{\leq}\delta}$-name $\dot c_0$ such that $\one_{\p_{{\leq}\delta}}\forces \psi(\dot c_0,\dot a)\wedge \dot c_0>\beta_0$. Since antichains of $\p_{{\leq}\delta}$ have size at most $|\p_{{\leq}\delta}|$, there is an ordinal $\beta_1$ such that $\one_{\p_{{\leq}\delta}}\forces \dot c_0<\beta_1$. Again, we can find a $\p_{{\leq}\delta}$-name $\dot c_1$ such that $\one_{\p_{{\leq}\delta}}\forces \psi(\dot c_1,\dot a)\wedge \dot c_1>\beta_1$, and then find $\beta_2$ such that $\one_{\p_{{\leq}\delta}}\forces \dot c_1<\beta_2$. Continuing in this manner, we can define a sequence $\seq{\dot c_n\mid n<\omega}$ of $\p_{{\leq}\delta}$-names and a sequence $\seq{\beta_n\mid n<\omega}$ of ordinals such that $\one_{\p_{{\leq}\delta}}\forces \psi(\dot c_n,\dot a)\wedge\check\beta_n<\dot c_n<\check\beta_{n+1}$. Let $\one_{\p_{{\leq}\delta}}\forces \dot c=\bigcup_{n<\omega}\dot c_n$ and let $\beta=\bigcup_{n<\omega}\beta_n$. Then $\one_{\p_{{\leq}\delta}}\forces \psi(\dot c,\dot a)\wedge \dot c=\check{\beta}$.

By similar arguments from the previous paragraph, we can assume that $\vec A$ is already definable in some $L[G_{{\leq}\delta}]$. For $\xi\in{\rm Ord}$, let $\dot A_\xi$ be a nice $\p_{{\leq}\delta}$-name for $A_\xi$. Observe that sufficiently high above $\delta$, we can assume, by coding appropriately, that $\dot A_\xi\subseteq \xi$. Thus, using that ${\rm Ord}$ is subtle in $L$, there must be $\alpha<\beta$ in $\bar C$ such that $\dot A_\beta\cap \alpha=\dot A_\alpha$. If the coding is chosen properly, we will then have that $A_\alpha\cap\beta=A_\beta$ as well. This finishes the argument that ${\rm Ord}$ is subtle holds in $L[G]$.

Second, we will argue that in $L[G]$, second-order logic $\bL^2$ cannot have a weak compactness cardinal. So suppose toward a contradiction that $\kappa$ is the least weak compactness cardinal for $\bL^2$ in $L[G]$. By Lemma \ref{lem-wc-second}, $\kappa$ is a regular limit cardinal, and hence inaccessible, since the ${\rm GCH}$ continues to hold in $L[G]$ by standard arguments.

For every regular $\gamma$, let $G_\gamma$ be the Cohen subset of $\gamma$ added by $G$ and observe that all initial segments of $G_\gamma$ are in $L$ by closure of $\Add(\gamma,1)$ and the fact that $\p$ is a product. Let $\tau$ be the language consisting of a binary relation $\in$, a unary predicate $D$, and constants $\{c_x\mid x\in L_\kappa\}\cup\{c\}$. Let $T$ be the $\bL^2(\tau)$-theory:
\begin{enumerate}
\item ${\rm ED}(L_\kappa,\in,G_\kappa,c_x)_{x\in L_\kappa}$,
\item $\{c_\xi<c<\kappa\mid \xi<\kappa\}$,
\item $D$ is a set of ordinals,
\item every initial segment of $D$ is in $L$,
\item $D$ is not in $L$.
\end{enumerate}
Statements (4) and (5) are second-order assertions, to verify which we have to ask whether there is an $L_\alpha$ witnessing them (which can be coded by a subset of the model). The theory $T$ is ${<}\kappa$-satisfiable, as witnessed by the structure $(L_\kappa,\in,G_\kappa)$, and has size $\kappa$. So by our assumption on $\kappa$, $T$ has a model $(L_\beta,\in,D)$ with $\beta>\kappa$. In particular, there is an elementary embedding $j:L_\kappa\to L_\beta$. If $j$ is non-trivial, it must have a critical point $\gamma<\kappa$, and the existence of such an embedding implies  $0^\sharp$ in $L[G]$. But this is impossible since $\p$ is countably closed, and hence cannot add a real. Thus, $j$ is the identity map, but then it follows that $G_\kappa\in L$, which is impossible. This final contradiction shows that there cannot be a weak compactness cardinal for $\bL^2$ in $L[G]$.
\end{proof}
Finally, it should be noted that Vop\v enka's principle is consistent with the failure of the existence of a global well-order.
\begin{theorem}
It is consistent that there is a model of ${\rm ZFC}$ in which\footnote{This result came out of a joint discussion with Andrew Brooke-Taylor and Asaf Karagila.}
\begin{enumerate}
\item  Vop\v enka's principle holds,
\item there is no definable global well-order.
\end{enumerate}
\begin{proof}
First, the argument at the start of the proof of Theorem \ref{th:globalChoiceCE} can modified to get the failure of global choice from an Easton-support  ${\rm Ord}$-length iteration adding a Cohen subset to every regular cardinal (rather than the product used there). The key to the argument is to show that the iteration is weakly homogeneous. Iterations of weakly homogeneous forcing notions can fail to be weakly homogeneous, but the result follows for iterations of weakly homogeneous forcing notions, where the forcings at each stage have canonical names \cite{DobrinenFriedman:homogeneousIterations}, which is definitely the case for $\Add(\gamma,1)$. Now, by a result of Brooke-Taylor \cite{Brooke-Taylor:Vopenka}, \Vopenka's principle is preserved by all progressively directed-closed\footnote{A forcing iteration is \emph{progressively directed-closed} if for every cardinal $\alpha$, the tail of the iteration is ${<}\alpha$-directed closed.} ${\rm Ord}$-length Easton-support iterations. Thus, starting in a universe satisfying Vop\v enka's principle, we can force to kill global choice, while preserving Vop\v enka's principle.
\end{proof}
\end{theorem}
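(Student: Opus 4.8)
The plan is to begin with a model $V$ of \Vopenka's principle and force over it with an Easton-support ${\rm Ord}$-length \emph{iteration} $\p$ that adjoins a Cohen subset to every regular cardinal, in place of the product $\Pi_{\gamma\in\text{Reg}}\Add(\gamma,1)$ used in Theorem~\ref{th:globalChoiceCE}. The reason to switch from the product to an iteration is that preserving \Vopenka's principle requires a preservation theorem, and the relevant result of Brooke-Taylor \cite{Brooke-Taylor:Vopenka} is stated for Easton-support ${\rm Ord}$-length iterations that are progressively directed-closed.

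First I would check that $\p$ is progressively directed-closed. Fixing a cardinal $\alpha$, the tail of $\p$ beyond stage $\alpha$ consists only of factors $\Add(\gamma,1)$ with $\gamma\geq\alpha$ regular, each of which is ${<}\gamma$-directed closed; standard support considerations then give that the tail is ${<}\alpha$-directed closed. Brooke-Taylor's preservation theorem applies and yields that \Vopenka's principle continues to hold in the extension $V[G]$, securing clause (1).

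The subtler half is clause (2). For the product in Theorem~\ref{th:globalChoiceCE}, the absence of a definable global well-order was extracted from weak homogeneity, each factor being weakly homogeneous. The obstruction to copying this argument is that an iteration of weakly homogeneous forcings need not be weakly homogeneous, since later stages depend on earlier generics. The way around this is the theorem of Dobrinen and Friedman \cite{DobrinenFriedman:homogeneousIterations}, which guarantees weak homogeneity for iterations whose stage-wise posets admit canonical names; the factors $\Add(\gamma,1)$ clearly have canonical names, their conditions being absolutely coded partial functions. With weak homogeneity established, I would reuse the homogeneity argument of Theorem~\ref{th:globalChoiceCE}: any definable candidate for a global well-order would have to decide, at the weakest condition, comparisons between mutually generic Cohen sets that are interchanged by automorphisms of $\p$, which is impossible.

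I expect the main obstacle to be precisely the weak homogeneity of the iteration. The passage from weakly homogeneous factors to a weakly homogeneous iteration is exactly the delicate step, and it is entirely accounted for by invoking \cite{DobrinenFriedman:homogeneousIterations} together with the observation that $\Add(\gamma,1)$ has canonical names. Once homogeneity is available, both the failure of a definable global well-order and (via Brooke-Taylor) the preservation of \Vopenka's principle follow in a routine manner from the cited results.
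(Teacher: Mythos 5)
Your proposal is correct and follows essentially the same route as the paper: the same Easton-support iteration of $\Add(\gamma,1)$ over a model of \Vopenka's principle, with Brooke-Taylor's preservation theorem handling clause (1) and the Dobrinen--Friedman result on canonical names supplying the weak homogeneity needed to rerun the product argument for clause (2). The only difference is that you spell out the verification of progressive directed-closure, which the paper leaves implicit.
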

\bibliographystyle{amsalpha}
\bibliography{largeCardinalLogics}

\providecommand{\bysame}{\leavevmode\hbox to3em{\hrulefill}\thinspace}
\providecommand{\MR}{\relax\ifhmode\unskip\space\fi MR }
\providecommand{\MRhref}[2]{%
  \href{http://www.ams.org/mathscinet-getitem?mr=#1}{#2}
}
\providecommand{\href}[2]{#2}
\begin{thebibliography}{AHKZ77}

\bibitem[AHKZ77]{ahkz-flip}
Fred Abramson, Leo Harrington, E.~Kleinberg, and W.~Zwicker, \emph{Flipping
  properties: a unifying thread in the theory of large cardinals}, Annals of
  Mathematical Logic \textbf{12} (1977), no.~1, 25--58.

\bibitem[Apt12]{a-sargsyan}
Arthur Apter, \emph{Some applications of {S}argsyan's equiconsistency method},
  Fundamenta Mathematicae \textbf{216} (2012), 207--222.

\bibitem[AS07]{as-universal}
Arthur Apter and Grigor Sargsyan, \emph{A reduction in consistency strength for
  universal indestructibility}, Bulletin of the Polish Academy of Sciences
  Mathematics \textbf{55} (2007), no.~1, 1--6.

\bibitem[Bag12]{b-cndard}
Joan Bagaria, \emph{{$C^{(n)}$}-cardinals}, Arch. Math. Logic \textbf{51}
  (2012), no.~3-4, 213--240. \MR{2899689}

\bibitem[Ben78]{b-sccomp}
Miroslav Benda, \emph{Compactness for omitting of types}, Annals of
  Mathematical Logic \textbf{4} (1978), 39--56.

\bibitem[BGS17]{bgs-virt-vop}
Joan Bagaria, Victoria Gitman, and Ralf Schindler, \emph{Generic
  {V}op\v{e}nka's {P}rinciple, remarkable cardinals, and the weak {P}roper
  {F}orcing {A}xiom}, Archive for Mathematical Logic \textbf{56} (2017), 1--20.

\bibitem[Bon14]{b-tamelc}
Will Boney, \emph{Tameness from large cardinal axioms}, Journal of Symbolic
  Logic \textbf{79} (2014), no.~4, 1092--1119.

\bibitem[Bon20]{b-mtlc}
\bysame, \emph{Model-theoretic characterizations of large cardinals}, Israel
  Journal of Mathematics \textbf{236} (2020), 133--181.

\bibitem[BT11]{Brooke-Taylor:Vopenka}
Andrew~D. Brooke-Taylor, \emph{Indestructibility of {V}op\v{e}nka's
  {P}rinciple}, Arch. Math. Logic \textbf{50} (2011), no.~5-6, 515--529.
  \MR{2805294}

\bibitem[BU17]{bu-lctame}
Will Boney and Spencer Unger, \emph{Large cardinal axioms from tameness in
  abstract elementary classes}, Proceedings of the American Mathematical
  Society \textbf{145} (2017), no.~10, 4517--4532.

\bibitem[CK12]{changkeisler}
C.~C. Chang and H.~Jerome Keisler, \emph{Model theory}, 3rd ed., Dover
  Publications, 2012.

\bibitem[DF08]{DobrinenFriedman:homogeneousIterations}
Natasha Dobrinen and Sy-David Friedman, \emph{Homogeneous iteration and measure
  one covering relative to {HOD}}, Arch. Math. Logic \textbf{47} (2008),
  no.~7-8, 711--718. \MR{2448954}

\bibitem[DG]{DimopolousGitman:Woodin}
Stamatis Dimopolous and Victoria Gitman, \emph{A note on externally definable
  woodin cardinals}, In progress.

\bibitem[Dim19]{d-wfsc}
Stamatis Dimopoulos, \emph{Woodin for strong compactness cardinals}, Journal of
  Symbolic Logic \textbf{84} (2019), no.~1, 301--319.

\bibitem[GH19]{gh-genvop}
Victoria Gitman and Joel~David Hamkins, \emph{A model of the generic
  {V}op\v{e}nka principle in which the ordinals are not {M}ahlo}, Arch. Math.
  Logic \textbf{58} (2019), no.~1-2, 245--265. \MR{3902814}

\bibitem[GS18]{gs-virtual}
Victoria Gitman and Ralf Schindler, \emph{Virtual large cardinals}, Ann. Pure
  Appl. Logic \textbf{169} (2018), no.~12, 1317--1334. \MR{3860539}

\bibitem[{Ham}]{hamkins:GlobalLinearOrder}
Joel~David {Hamkins (http://mathoverflow.net/users/1946/joel-david-hamkins)},
  \emph{Does zfc prove the universe is linearly orderable?},
  \url{https://mathoverflow.net/questions/110799/does-zfc-prove-the-universe-is-linearly-orderable/110823#110823}
  (version: 2020-12-15).

\bibitem[HJ17]{HamkinsJohnstone2017:StronglyUpliftingCardinalsAndBoldfaceResurrection}
Joel~David Hamkins and Thomas Johnstone, \emph{Strongly uplifting cardinals and
  the boldface resurrection axioms}, Archive for Mathematical Logic \textbf{56}
  (2017), no.~7, 1115--1133.

\bibitem[Mag71]{m-roleof}
Menachem Magidor, \emph{On the role of supercompact and extendible cardinals in
  logic}, Israel Journal of Mathematics \textbf{10} (1971), 147--157.

\bibitem[Mak85]{m-vopcomp}
Johann Makowsky, \emph{Vopenka's principle and compact logics}, Journal of
  Symbolic Logic \textbf{50} (1985), 42--48.

\bibitem[NW19]{nw-games-ramsey}
Dan Nielsen and Philip Welch, \emph{Games and {R}amsey-like cardinals}, Journal
  of Symbolic Logic \textbf{84} (2019), 408--437.

\bibitem[Per15]{p-sc-ah}
Norman~Lewis Perlmutter, \emph{The large cardinals between supercompact and
  almost-huge}, Archive for Mathematical Logic \textbf{54} (2015), no.~3-4,
  257--289.

\bibitem[Sch00]{schindler:remarkable1}
Ralf-Dieter Schindler, \emph{Proper forcing and remarkable cardinals}, Bull.
  Symbolic Logic \textbf{6} (2000), no.~2, 176--184. \MR{1765054 (2001h:03096)}

\bibitem[SV]{sv-multidim}
Saharon Shelah and Sebastien Vasey, \emph{Categoricity and multidimensional
  diagrams}, Submitted, \url{https://arxiv.org/abs/1805.06291}.

\bibitem[Tar62]{t-compactness}
Alfred Tarski, \emph{Some problems and results relevant to the foundations of
  set theory}, Logic, Methodology and Philosophy of Science, Proceedings of the
  1960 International Congress, Stanford University Press, 1962.

\bibitem[Tsa]{t-cn-ext}
Konstantinos Tsaprounis, \emph{On {$C^{(n)}$}-extendible cardinals}, Journal of
  Symbolic Logic \textbf{Accepted}.

\bibitem[V{\"{a}}{\"{a}}14]{vaananen:sortLogic}
Jouko V{\"{a}}{\"{a}}n{\"{a}}nen, \emph{Sort logic and foundations of
  mathematics}, Infinity and truth, Lect. Notes Ser. Inst. Math. Sci. Natl.
  Univ. Singap., vol.~25, World Sci. Publ., Hackensack, NJ, 2014, pp.~171--186.
  \MR{3205075}

\end{thebibliography}

\end{document}